\renewcommand{\theequation}{\thesection.\arabic{equation}}
\newtheorem{theorem}{Theorem}[section]
\newtheorem{lemma}[theorem]{Lemma}
\newtheorem{proposition}[theorem]{Proposition}
\newtheorem{corollary}[theorem]{Corollary}
\newtheorem{remark}[theorem]{Remark}
\newtheorem{definition}[theorem]{Definition}
\newtheorem{definition-proposition}[theorem]{Definition-Proposition}
\newcommand{\eqnsection}{
\renewcommand{\theequation}{\thesection.\arabic{equation}}
    \makeatletter
    \csname  @addtoreset\endcsname{equation}{section}
    \makeatother}
\def\ali{\hfill\break}
\def\w{\omega}
\def\sss{{\mathcal S}}
\def\demi{{1\over 2}}
\def\ddd{{\cal D}}
\def\ccc{{\cal C}}
\def\Z{{{\Bbb Z}}}
\def\P{{{\Bbb P}}}
\def\N{{{\Bbb N}}}
\def\R{{{\Bbb R}}}
\def\ppp{{{\mathcal P}}}
\def\aaa{{{\mathcal A}}}
\def\lll{{{\mathcal U}}}
\def\im{{{\hbox{Im}}}}
\def\fff{{{\mathcal F}}}
\def\ttt{{\mathcal T}}
\def\ccc{{\mathcal C}}
\def\E{{{\Bbb E}}}
\def\dive{{\text{div}}}
\def\hhh{{{\mathcal H}}}
\def\indic{{{\mathbbm 1}}}
\def\Id{{\hbox{Id}}}
\def\bbb{{{\mathcal B}}}
\def\Ktrans{{}^t\! K}
\def\otau{{\overline \tau}}
\renewcommand{\le}{\leqslant}
\renewcommand{\ge}{\geqslant}
\renewcommand{\subset}{\subseteq}
\newcommand{\bal}{\begin{align*}}
\newcommand{\eal}{\end{align*}}
\newcommand{\beq}{\begin{eqnarray*}}
\newcommand{\eeq}{\end{eqnarray*}}
\newcommand{\bte}{\begin{theorem}}
\newcommand{\ete}{\end{theorem}}
\newcommand{\bl}{\begin{lemma}}
\newcommand{\el}{\end{lemma}}
\newcommand{\bd}{\begin{description}}
\newcommand{\ed}{\end{description}}
\newcommand{\bc}{\begin{cases}}
\newcommand{\ec}{\end{cases}}
\newcommand{\bp}{\begin{proof}}
\newcommand{\ep}{\end{proof}}
\newcommand{\bco}{\begin{corol}}
\newcommand{\eco}{\end{corol}}
\newcommand{\iy}{\infty}
\newcommand{\tx}{\text}
\newcommand{\D}{\mathcal{D}}
\newcommand{\al}{\alpha}
\newcommand{\be}{\beta}
\newcommand{\tet}{\theta}
\newcommand{\s}{\sigma}
\def\bdes{\begin{description}}
\def\edes{\end{description}}
\def\sERRW{{$\star$-ERRW }}
\def\sVRJP{{$\star$-VRJP }}
\def\sERRWse{{$\star$-ERRW}}
\def\sVRJPse{{$\star$-VRJP}}
\def\Gen{{\mathcal {L}}}
\def\uuu{{\mathcal{U}}}
\def\ddd{{\mathcal D}}
\def\ggg{{\mathcal G}}
\def\H{{\mathbb H}}
\def\qqq{{\mathcal Q}}
\begin{document}

\begin{frontmatter}
\title{The $\star$-Vertex-Reinforced Jump Process }
\runtitle{The  $\star$-VRJP I}

\begin{aug}

\author[A]{\fnms{Christophe}~\snm{Sabot}\ead[label=e1]{sabot@math.univ-lyon1.fr}},
\and
\author[B]{\fnms{Pierre}~\snm{Tarr\`es}\ead[label=e2]{tarres@nyu.edu}}
\address[A]{Universit\'e Claude Bernard Lyon 1,
Institut Camille Jordan, CNRS UMR 5208, 43, Boulevard du 11 novembre 1918,
69622 Villeurbanne Cedex, France, and Institut Universitaire de France\printead[presep={,\ }]{e1}}
\address[B]{NYU-ECNU Institute of Mathematical Sciences at NYU Shanghai, China; Courant Institute of Mathematical Sciences, New York, USA; CNRS and Universit\'e Paris-Sorbonne, Laboratoire de Probabilit\'es, Statistique et Mod\'elisation, 75005 Paris, France.\printead[presep={,\ }]{e2}}
\end{aug}

\begin{abstract}
We investigate the non-reversible generalization of the Vertex-Reinforced Jump Process (VRJP), 
called  the $\star$-Vertex-Reinforced Jump Process ($\star$-VRJP) and introduced in \cite{BST20}. It can be seen as the continuous-time counterpart to the $\star$-Edge-Reinforced Random Walk (\sERRWse) (see \cite{Bacallado1,BST20}), which is itself a non-reversible, and in fact Yaglom reversible, generalization of the original ERRW  introduced by Coppersmith and Diaconis in 1986 \cite{coppersmith}. In contrast to the classical VRJP, the \sVRJP is not exchangeable after time-change, which leads to several difficulties and new phenomena. 

Firstly, we show that with some appropriate randomization of the initial local time, it becomes partially exchangeable after time-change. We provide a representation of the "randomized" \sVRJP as a mixture of Yaglom reversible Markov jump processes with an explicit mixing measure, and we prove that the non-randomized \sVRJP can be written as a mixture of conditioned Markov processes.

Secondly, we give a representation of the \sVRJP in terms of a random Schr\"odinger operator. The corresponding representation for the classical VRJP has proved to be very useful in the understanding of its asymptotic behavior. 
The construction is based on several new and rather remarkable identities between integrals on the space of $\star$-symmetric and $\star$-antisymmetric functions on vertices. We give a description of the randomized \sVRJP in terms of that random Schr\"odinger operator, which allows us to prove the representation of the randomized \sVRJP as a mixture of Markov jump processes in a different and more analytic manner. Similarly as for the VRJP, we think that the representation by a random Schr\"odinger operator and the associated identities are key-features of the \sVRJPse.
\end{abstract}

\begin{keyword}[class=MSC]
\kwd[Primary ]{60K37, 60K35, 82B44}
\kwd[; secondary ]{81T25, 81T60}
\end{keyword}

\begin{keyword}
\kwd{Vertex-reinforced jump process, self-interacting random walks, random
Schr\"odinger operator, supersymmetric hyperbolic nonlinear sigma model}
\end{keyword}

\end{frontmatter}

\setcounter{tocdepth}{1}

The purpose of this paper is to investigate a natural non-reversible generalization of the Vertex Reinforced Jump Process (VRJP). The VRJP is a time-continuous self-interacting jump process defined on an undirected conductance network. This process was proposed by Werner and initially investigated by Davis and Volkov (\cite{dv1,dv2}, see also \cite{bs}). In \cite{ST15}, it was shown to be intimately related on the one hand to the Edge Reinforced Random Walk (ERRW), and on the other hand to a supersymmetric hyperbolic sigma-field  introduced by Zirnbauer \cite{zirnbauer91} and investigated by Disertori, Spencer and Zirnbauer \cite{DSZ10,DS10}. These relations, together with the crucial estimates of \cite{DSZ10,DS10} and several other works \cite{ACK14,DST15,STZ17,SZ19,Poudevigne19}, allowed to clarify the picture about the recurrent/transient phases of the VRJP and the ERRW. 

The aim of this paper is to start a similar program for a family of processes interpolating between the non-directed case (ERRW and  VRJP) and the fully directed case (directed ERRW, or equivalently the random walk in random Dirichlet environment, see below).

The processes considered here are defined on a directed graph $\ggg=(V,E)$ endowed with an involution on the vertices: the involution is denoted by $\star:V\mapsto V$, and the graph satisfies the property that 
\begin{equation}
(i,j)\in E \iff (j^\star,i^\star)\in E.
\end{equation}
Hence, the involution $\star$ also acts on the edges and all the weights defined on the edges will be $\star$-symmetric. We call $\ggg$ a $\star$-directed graph.  

The natural generalization of the ERRW on $\star$-directed graphs was introduced in \cite{BST20} and generalizes the $k$-dependent ERRW defined by Baccalado in \cite{Bacallado1}. Starting from some positive $\star$-symmetric weights $(\alpha_e)_{e\in E}$ on the edges, the \sERRW behaves as follows: each time an edge $(i,j)$ is crossed, the weights of the edge $(i,j)$ and of the $\star$-symmetric edge $(j^\star,i^\star)$ are increased by one, and the process jumps with a probability proportional to the current weights. With that definition, the weights remain $\star$-symmetric at all times (see precise definition below). 

Under a divergence condition on the weights, we proved in \cite{BST20} that the \sERRW is partially exchangeable and we compute its mixing measure using a new discrete Feynman-Kac type formula. This yields the counterpart on $\star$-directed graphs of the Diaconis-Coppersmith "magic formula" \cite{coppersmith,keane-rolles1}. 

This model naturally interpolates between the undirected ERRW and the directed Edge reinforced random walk: indeed, when $\star$ is the identity, then 
$(j^\star,i^\star)=(j,i)$ is dual to the edge $(i,j)$ and this
yields the standard ERRW. On the contrary, when the graph is composed of two disconnected pieces $V_1$ and $V_1^\star$ which are mapped to one  another by the involution $\star$ then, starting from $V_1$, the \sERRW remains on $V_1$ and follows the directed ERRW. A more interesting version
arises when one glues $V_1$ and $V_1^\star$ by the starting point of the process, see Section \ref{sec:sERRW} for more details.  

The undirected and directed ERRW have very different behavior: e.g. on $\Z^d$, $d\ge 3$, the first exhibits a phase transition between recurrence and transience \cite{ST15,ACK14,DST15} and the second is always transient \cite{sabot1}. The \sERRW lives on a directed graph but keeps some reversibility in the reinforcement scheme, in that respect it shows some features of both processes.

In \cite{BST20}, following \cite{ST15}, we introduced the \sVRJP and proved that the \sERRW is a mixture of  \sVRJPse. Let $(W_e)_{e\in E}$ be a family of positive $\star$-symmetric real weights on the edges of the graph. By abuse of terminology, and by analogy with the VRJP, we sometimes call $(W_e)$ the "conductances", even though those are not symmetric in general. In the exponential time-scale, the \sVRJP is the continuous-time process which, at time $t$, jumps from $i$ to $j$ at a rate given by
\begin{eqnarray}\label{jump-rates1}
W_{i,j}e^{T_i(t)+T_{j^\star}(t)},
\end{eqnarray}
where  $T_i(t)$ is the occupation time of the vertex $i\in V$ at time $t$. At fixed time $t$, the transition rates are obviously not reversible. However the $\star$-symmetry on the transition rates, see \eqref{symmetry-condition}, leads to a type of Yaglom reversibility \cite{Yaglom0,Yaglom,Dobrushin} in the limit, see below.
A Markov jump process on the graph $\ggg$, with jump rates $(K_{i,j})_{(i,j)\in E}$, will be called Yaglom reversible if it admits an invariant measure $(\pi_i)_{i\in V}$ such that $\pi_i=\pi_{i^\star}$ for all $i\in V$, and $\pi_{i} K_{i,j}=\pi_{j^\star}K_{j^\star,i^\star}$ for all edge $(i,j)\in E$.  

Note that after a change of time similar to the VRJP case, see \cite{ST15}, the jumps rates \eqref{jump-rates1} can also be written as $W_{i,j}(1+L_{j^{\star}}(s))$ where $L$ is the local of the new process. That corresponds to the original time-scale of the VRJP.

Contrary to the standard VRJP, the \sVRJP is not partially exchangeable, even though the \sERRW is partially exchangeable under a condition on the weights. This adds a major difficulty to the model. However, we obtain in Part~\ref{partI} a counterpart to the results of \cite{ST15}:
\begin{itemize}
\item
Under a suitable randomization of the initial local times, we prove that the \sVRJP becomes partially exchangeable after a proper time change. We identify the limiting measure and represent this randomized \sVRJP as a mixture of Yaglom reversible Markov jump processes (with the definition above). 
\item
Coming back to the non randomized process, we prove that in general the \sVRJP can be written, after a proper time change, as a mixture of some self-interacting jump processes which can be seen as conditioned Markov jump processes.
\end{itemize}
In a second part, we give a random Schr\"odinger representation of the randomized \sVRJPse, which generalizes the representation obtained in \cite{STZ17} for the VRJP. Several new phenomena and difficulties appear compared to the VRJP, due to the necessary extra randomization of the initial local times. The representation is based on new and rather remarkable identities between integrals on the space of $\star$-symmetric and  $\star$-antisymmetric functions.
The corresponding representation of the classical VRJP by a random Schr\"odinger operator has given rise to several important developments, such as recurrence for all reinforcement parameters in dimension two, diffusive behavior in the transient phase in dimension at least 3, a monotonicity property and the uniqueness of phase transition  \cite{SZ19,Poudevigne19,Sabot21,Kozma_Peled21}. We present more detailed motivations for that random Schr\"odinger representation later in Section~\ref{Motivation}, since it is requires a prior understanding of partial exchangeability and the mixing measure.  

A natural question which emerges from these works is that of the existence of a counterpart to the supersymmetric hyperbolic sigma-field $\H^{2|2}$, which is deeply linked to the VRJP (see e.g. \cite{DSZ10,ST15,BHS20}). This question will be the object of a further work with Andrew Swan.   

\tableofcontents

\part{Exchangeability and mixing measure}\label{partI}

\section{A non-reversible counterpart to the Edge-Reinforced Random Walk and the Vertex Reinforced Jump Process}
\subsection{$\star$-directed graphs}\label{s_star-graph}
In this paper, we consider a directed graph $\ggg=(V,E)$ endowed with an involution on the vertices denoted by $\star$, and such that
\begin{equation}
\label{invol}
(i,j)\in E\;\;\; \Rightarrow \;\;\;(j^\star,i^\star)\in E.
\end{equation}
We write $i\to j$ to mean that $(i,j)$ is a directed edge of the graph, i.e. that $(i,j)\in E$.

Denote by $V_0$ the set of fixed points of $\star$ 
$$
V_0=\{i\in V \hbox{ s.t. } i=i^\star\},
$$
and by $V_1$ a subset of $V$ such that $V$ is a disjoint union
$$
V= V_0\sqcup V_1\sqcup V_1^\star.
$$

Denote by $\tilde E$ the set of edges quotiented by the relation $(i,j)\sim(j^\star,i^\star)$.
In the sequel we also consider $\tilde E$ as a subset of $E$ obtained
by choosing a representative among two equivalent edges $(i,j)$ and $(j^\star,i^\star)$.

If $v=(v_i)_{i\in V}$ is a function on the vertices we simply write $v^\star$ for the function given by $v^\star_i=v_{i^\star}$. The spaces $\sss$ and $\aaa$ of  $\star$-symmetric and $\star$-antisymmetric functions on the vertices, defined below, play a central role throughout the paper,
\begin{eqnarray}\label{A}
\aaa=\left\{a\in \R^V, \; a^\star=-a\right\}
\simeq\R^{V_1},
\end{eqnarray}
\begin{eqnarray}\label{S}
\sss= \left\{s\in \R^V, \; s^\star=s\right\}.
\end{eqnarray}
The orthogonal projections on the subspaces $\aaa$ (resp. $\sss$), are given by
\begin{eqnarray}\label{Projection}
\ppp_{\aaa}(u)= \demi(u-u^\star), \;\;\; \ppp_{\sss} (u)= \demi (u+u^\star).
\end{eqnarray}
We denote by $\left<\cdot,\cdot\right>$ the bilinear form on $\R^V\times\R^V$ given by
\begin{eqnarray}\label{product}
\left<x,y\right>=\sum_{i\in V} x_{i^\star}y_i,
\end{eqnarray}
Note  that, in general, $\left<\cdot,\cdot\right>$ is not a scalar product. We also use sometimes the Euclidian scalar product on $\R^V$ that we denote by $(\cdot, \cdot)$,
so that $\left<x,y\right>=(x^\star,y)$.


A function $(\alpha_{i,j})_{(i,j)\in E}$ on the edges is called $\star$-symmetric if it satisfies
\begin{eqnarray}\label{symmetry-condition}
\alpha_{i,j}=\alpha_{j^\star,i^\star}\text{ for all }(i,j)\in E.
\end{eqnarray}

In Part~\ref{partI} we always assume that the graph $\ggg$ is strongly connected, i.e. that for any vertices $i$ and $j$, there exists a directed path in the graph between $i$ and $j$.

\subsection{The $\star$-Edge Reinforced Random Walk ($\star$-ERRW)} 
\label{sec:sERRW}
Suppose we are given some $\star$-symmetric positive weights $(\alpha_{i,j})_{(i,j)\in E}$ on the edges.
Let $(X_n)_{n\in\N}$ be a nearest-neighbor discrete-time process taking values in $V$. For all $(i,j)\in E$, denote by $N_{i,j}(n)$
the number of crossings at time $n$ of the directed edge $(i,j)$, 
$$
N_{i,j}(n)=\sum_{k=0}^{n-1}\indic_{(X_k,X_{k+1})=(i,j)},
$$
and let  
$$
\alpha_{i,j}(n)= \alpha_{i,j}+ N_{i,j}(n)+N_{j^\star,i^\star}(n).
$$
Note that $(\alpha_e(n))_{e\in E}$ remains $\star$-symmetric at all time $n$. 

The process $(X_n)_{n\in\N}$ is called the $\star$-Edge Reinforced Random Walk (\sERRWse), with initial weights $(\alpha_e)_{e\in E}$ and starting from $i_0$, 
if $X_0=i_0$ and, for all $n\in\N$ and $j\in V$, 
$$
\P(X_{n+1}=j\;|\; X_0,X_1, \ldots X_n)= \indic_{X_n\rightarrow j}{\alpha_{X_n,j}(n)\over \sum_{l, X_n\rightarrow l} \alpha_{X_n,l}(n)}.
$$
As mentioned above, the \sERRW is a generalisation of the Edge-Reinforced Random Walk (ERRW) introduced in the seminal work of Diaconis and Coppersmith \cite{coppersmith}, corresponding to the case where $\star$ is the identity map. 

An example of \sERRW  is the $k$-dependent Reinforced Random Walk, already considered by Bacallado in \cite{Bacallado2}, which takes values in the de Bruijn graph $\ggg=(V,E)$ of a finite set $S$, defined as follows: $V=S^k$, $E=\{((i_1, \ldots i_k),(i_2, \ldots, i_{k+1})),\,i_1,\ldots i_{k+1}\in S\}$. Let $\star$ be the involution mapping $(i_1, \ldots i_k)\in V$ to the reversed sequence $(i_k, \ldots i_1)\in V$, which obviously satisfies \eqref{invol}. Note that, in that case, $V_0$ is the set of palindromes. Then the $k$-dependent Reinforced Random Walk is defined as the \sERRW on that de Bruijn graph, see \cite{BST20}.

As we pointed out in the introduction, the Random Walk in Random Dirichlet Environment (RWDE), considered in \cite{Enriquez-Sabot06,sabot1} (see also \cite{sabot-tournier17} for a review), can also be seen as a particular case of \sERRWse. 
Indeed, given a directed graph $\ggg_1=(V_1, E_1)$, consider its "reversed" graph $\ggg_1=(V_1^\star, E_1^\star)$, where $V_1^\star$ is a copy of $V_1$ and $E_1^\star$ is obtained by reversing each edge in $E_1$. Then the full graph $\ggg$ is a disjoint union $(V_1\sqcup V_1^\star,E_1\sqcup E_1^\star)$. If $i_0\in V_1$, then the \sERRW is a Random Walk in Dirichlet environment on $\ggg_1$ under the annealed law.  An interesting modification of the previous construction, which makes the $\star$-directed graph strongly connected, is obtained by gluing the two graphs $\ggg_1$ and $\ggg_1^\star$ at the starting point $i_0$. In that case, the condition for partial exchangeability of the $\star$-ERRW, see condition \eqref{divergence} below, coincides with the condition for the key property of statistical invariance by time-reversal of RWDE, see \cite{sabot1} lemma~1 or \cite{sabot-tournier17}, Lemma~3 and Remark~1 of \cite{BST20}.

The divergence operator  $\dive: \R^E\to \R^V$ is defined on functions on the edges $(x_e)_{e\in E}$ by
$$
\dive(x)(i)= \sum_{j, i\to j} x_{i,j}-\sum_{j, j\to i} x_{j,i}.
$$
The following result, which extends the corresponding result of Baccalado \cite{Bacallado2} in the case of the  $k$-dependent ERRW, is proved in \cite{BST20}:

\begin{proposition}[\cite{BST20}]
\label{bac}
Let $i_0\in V$ be the starting point of the \sERRWse, and suppose that the weights $(\alpha_e)_{e\in E}$ satisfy the symmetry condition \eqref{symmetry-condition}, as well as the following property
\begin{eqnarray}\label{divergence}
\dive(\alpha)(i)=\indic_{i=i_0^\star}-\indic_{i=i_0}.
\end{eqnarray}
Then the \sERRW $(X_n)$ is partially exchangeable in the sense of \cite{diaconis-freedman}.
\end{proposition}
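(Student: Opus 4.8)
The plan is to compute directly, from the reinforcement rule, the probability of an arbitrary admissible finite path $\sigma=(i_0,\sigma_1,\dots,\sigma_n)$ (with $\sigma_0=i_0$), and to check that it depends on $\sigma$ only through the starting vertex $i_0$ and the crossing numbers $N_e=N_e(n)$, $e\in E$; by the criterion of \cite{diaconis-freedman} this is exactly partial exchangeability (note that the length $n=\sum_{e}N_e$ and the terminal vertex $\sigma_n$, read off from $\dive$ of $(N_e)$, are already determined by $i_0$ and $(N_e)$). Writing $e_k=(\sigma_k,\sigma_{k+1})$, the transition rule gives
\[
\P(X_0=i_0,\dots,X_n=\sigma_n)=\frac{\prod_{k=0}^{n-1}\alpha_{e_k}(k)}{\prod_{k=0}^{n-1}D_k},\qquad D_k=\sum_{l,\,\sigma_k\to l}\alpha_{\sigma_k,l}(k).
\]
For the numerator I would group the factors by the $\star$-orbit $\{e,e^*\}$ of the crossed edge: by the symmetry condition \eqref{symmetry-condition} the common weight $\alpha_{i,j}(k)=\alpha_{j^*,i^*}(k)$ goes up by one whenever \emph{either} $e$ or $e^*$ is used, so the $a$-th use of the orbit contributes $\alpha_e+(a-1)$ (resp.\ $\alpha_e+2(a-1)$ if $e=e^*$); hence $\prod_k\alpha_{e_k}(k)$ is a product over $e\in\tilde E$ of Pochhammer-type factors whose lengths are the orbit crossing numbers $N_e+N_{e^*}$, in particular a function of $(N_e)_{e\in E}$ with no memory of the order of traversal.

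The denominator is the heart of the matter, and this is where \eqref{divergence} is used. Expanding $\alpha_{\sigma_k,l}(k)=\alpha_{\sigma_k,l}+N_{\sigma_k,l}(k)+N_{l^*,\sigma_k^*}(k)$ and summing over out-neighbours $l$ of $\sigma_k$, using $\sigma_k\to l\Leftrightarrow l^*\to\sigma_k^*$ to rewrite the second family of counters, I expect to obtain
\[
D_k=\alpha(\sigma_k)+\ell_k(\sigma_k)+\ell_{k+1}(\sigma_k^*)-\indic_{\sigma_k^*=i_0},
\]
where $\alpha(i)=\sum_{l,\,i\to l}\alpha_{i,l}$ is the out-weight and $\ell_k(v)$ is the number of visits of $\sigma$ to $v$ strictly before time $k$. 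The key move is then to group these factors by the \emph{vertex} orbit $\{v,v^*\}$: if the times $k$ with $\sigma_k\in\{v,v^*\}$ are $k_1<\dots<k_r$, then (for $v\neq v^*$) $\ell_{k_a}(v)+\ell_{k_a}(v^*)=a-1$, so the orbit contributes $\prod_{a=1}^{r}\big(\alpha(\sigma_{k_a})-\indic_{\sigma_{k_a}^*=i_0}+(a-1)\big)$. Since the symmetry of the weights gives $\sum_{e,\,\overline e=i}\alpha_e=\alpha(i^*)$, condition \eqref{divergence} reads $\alpha(i)-\alpha(i^*)=\indic_{i=i_0^*}-\indic_{i=i_0}$: this forces $\alpha(v)=\alpha(v^*)$ for every orbit other than $\{i_0,i_0^*\}$, while for the exceptional orbit it gives $\alpha(i_0^*)=\alpha(i_0)+1$, which is precisely what makes the two possible factor values $\alpha(i_0)-\indic_{i_0^*=i_0}+(a-1)$ and $\alpha(i_0^*)-1+(a-1)$ coincide (both equal $\alpha(i_0)+(a-1)$). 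After an analogous direct check for the fixed points $v\in V_0$, each orbit contribution is a Pochhammer-type factor of length $r=\sum_{e,\,\underline e\in\{v,v^*\}}N_e$; multiplying over all orbits shows $\prod_k D_k$, and hence the path probability, is a function of $i_0$ and $(N_e)_{e\in E}$ only, which concludes the argument.

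The only genuinely delicate points I anticipate are the bookkeeping near the start and at the fixed points: the term $\indic_{\sigma_k^*=i_0}$ in $D_k$ is exactly what allows the orbit $\{i_0,i_0^*\}$ to telescope once \eqref{divergence} is invoked, and it must be tracked with care (including in the degenerate cases $i_0\in V_0$ and $e=e^*$). The other point worth stressing is structural: the per-vertex products are \emph{not} order-independent on their own (for a constant $c$ one meets $c(c+2)$ or $(c+1)(c+2)$ depending on how the visits to $v$ and to $v^*$ interleave), and it is only after multiplying the contribution of $v$ by that of $v^*$ and using $\alpha(v)=\alpha(v^*)$ that the dependence on the interleaving disappears — so the $\star$-orbit, not the individual vertex, is the correct unit of accounting, and the divergence hypothesis is what guarantees the matching of the two constants.
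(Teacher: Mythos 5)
Your argument is correct, and since the paper does not prove Proposition~\ref{bac} itself (it is quoted from \cite{BST20}), what you wrote is a self-contained substitute for the outsourced proof; it is the natural de Finetti-type path-counting verification, in the spirit of Bacallado's computation that \cite{BST20} extends. I checked the two steps you flagged as delicate and they hold: summing the reinforced weights over out-neighbours of $\sigma_k$ does give $D_k=\alpha(\sigma_k)+\ell_k(\sigma_k)+\ell_{k+1}(\sigma_k^*)-\indic_{\sigma_k^*=i_0}$, because $\sum_l N_{\sigma_k,l}(k)$ counts departures from $\sigma_k$ before time $k$ while, via $\sigma_k\to l\Leftrightarrow l^*\to\sigma_k^*$, the terms $N_{l^*,\sigma_k^*}(k)$ count arrivals at $\sigma_k^*$ up to time $k$, the indicator removing the time-$0$ visit; and since the $\star$-symmetry gives $\sum_{e,\overline e=i}\alpha_e=\alpha(i^*)$, condition \eqref{divergence} is exactly $\alpha(i)-\alpha(i^*)=\indic_{i=i_0^*}-\indic_{i=i_0}$, so the $a$-th factor of the orbit $\{i_0,i_0^*\}$ equals $\alpha(i_0)+(a-1)$ whichever of the two vertices is visited, the other non-self-dual orbits give $\alpha(v)+(a-1)$, and a self-dual $v$ gives the deterministic factor $\alpha(v)+2(a-1)+1-\indic_{v=i_0}$ with no hypothesis needed (this also settles the degenerate case $i_0=i_0^*$, where \eqref{divergence} reduces to $\alpha(i)=\alpha(i^*)$ for all $i$). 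Combined with the Pochhammer form of the numerator (with increment $2$ on self-dual edges), the path probability depends only on $i_0$ and $(N_e)_{e\in E}$, which is precisely the Diaconis--Freedman criterion; your closing observation that the correct unit of accounting is the vertex $\star$-orbit rather than the single vertex is indeed the structural point that makes the argument work.
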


\subsection{The $\star$-Vertex Reinforced Jump Process ($\star$-VRJP)} \label{s-def-svrjp}
Let us now describe the representation of the  \sERRW in terms of a $\star$-Vertex-Reinforced Jump Process (\sVRJPse) with independent gamma conductances, similarly as for the ERRW. 

Suppose we are given $\star$-symmetric positive weights $(W_{i,j})_{(i,j)\in E}$, i.e. satisfying \eqref{symmetry-condition}, that we call 
"conductances" by analogy with VRJP, even though they are not symmetric (see introduction).
We often extend $W$ to a $V\times V$ matrix by setting $W_{i,j}=0$ if $(i,j)\not\in E$. 
Clearly, the matrix $W$ is $\left<\cdot,\cdot\right>$-symmetric, since $W_{i,j}=W_{j^\star,i^\star}$, where the bilinear form $\left<\cdot,\cdot\right>$ is defined in \eqref{product}.

If $(u_i)_{i\in V}\in \R^V$, 
define $W^u\in \R^E$ by
\begin{eqnarray}\label{Wu}
W^u_{i,j}= W_{i,j}e^{u_i+u_{j^\star}}.
\end{eqnarray}
Remark that $(W^u_e)_{e\in E}$ is $\star$-symmetric, i.e. $W^u_{i,j}= W^u_{j^\star,i^\star}$ for $(i,j)\in E$. 

Consider the process $(X_t)_{t\ge 0}$ with state space $V$, which, conditioned on the past at time $t$, with $X_t=i$, jumps from $i$ to $j$, such that $i\to j$, at a rate
\begin{eqnarray}\label{jump-rates}
W^{T(t)}_{i,j}=W_{i,j}e^{T_i(t)+T_{j^\star}(t)},
\end{eqnarray}
where $T(t)$ is the local time of $X$ at time $t$, i.e.
$$
T_i(t)=\int_0^t \indic_{X_u =i} \; du, \;\;\; \forall i\in V.
$$
We call this process the $\star$-VRJP.
When $\star$ is the identity, the jump rates in \eqref{jump-rates} correspond to the ones for the standard VRJP in the exponential time scale considered in~\cite{ST15}.
We denote by $\P^W_{i_0}$ the law of the \sVRJP starting from the point $i_0$. We always define the \sVRJP on the canonical space ${\mathcal{D}}([0,\infty),V)$ of c\`adl\`ag functions from $[0,\infty)$ to  $V$, and $(X_t)_{t\ge 0}$ will denote the canonical process on ${\mathcal{D}}([0,\infty),V)$. 

The following relation between the \sVRJP and \sERRW was given in~\cite{BST20}.
\begin{lemma}\label{lem_ERRW-VRJP}
Let $(\alpha_{e})_{e\in E}$ be $\star$-symmetric positive weights on the edges.
Let $(W_e)_{e\in \tilde E}$ be independent Gamma random variables with weights $(\alpha_{e})_{e\in \tilde E}$, and let $(W_e)_{e\in E}$ be the conductances on $E$ constructed to be $\star$-symmetric.
Let $(X_t)_{t\ge 0}$ be the $\star$-VRJP with conductances $(W_e)$ starting from $i_0$, 
i.e. the process with law  $\P^W_{i_0}$, and let $(\tilde X_n)_{n\in \N}$ be
the discrete time process describing the successive jumps of $(X_t)_{t\ge 0}$.
Then, after taking the expectation with respect to the r.v. $W$, under $\P_{i_0}^W$,  $(\tilde X_n)$ has the law of the $\star$-ERRW with initial weights $(\alpha_e)_{e\in E}$ and starting at $i_0$.

\end{lemma}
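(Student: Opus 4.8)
The goal is to show that averaging the discrete skeleton of the $\star$-VRJP (with independent Gamma conductances) over the conductances reproduces the $\star$-ERRW. I would mimic the strategy of \cite{ST15} for the non-directed case. The first step is to compute, conditionally on the conductances $(W_e)$, the law of the discrete-time skeleton $(\tilde X_n)$ of the $\star$-VRJP. Because the jump rate out of $i$ toward $j$ at time $t$ is $W_{i,j}e^{T_i(t)+T_{j^*}(t)}$, the total exit rate from $i$ at time $t$ is $e^{T_i(t)}\sum_{j:\,i\to j}W_{i,j}e^{T_{j^*}(t)}$, and the probability that, upon leaving $i$, the walk goes to $j$ is the ratio
\[
\frac{W_{i,j}e^{T_i(t)+T_{j^*}(t)}}{\sum_{l:\,i\to l}W_{i,l}e^{T_i(t)+T_{l^*}(t)}}
=\frac{W_{i,j}e^{T_{j^*}(t)}}{\sum_{l:\,i\to l}W_{i,l}e^{T_{l^*}(t)}}.
\]
So the skeleton is itself a random walk with ``conductances'' $W_{i,j}e^{T_{j^*}(t)}$ that are modulated only through the local time at the \emph{target's} $\star$-image. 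This is the key structural observation.

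The second step is to integrate out the conductances. Here I would use the standard device: rather than first sampling $W$ and then running the process, sample $W$ and the process jointly, and compute the joint density of a finite trajectory $(\tilde X_0,\dots,\tilde X_n)$ together with the holding times, then integrate the holding times and the $W_e$'s. Because the $(W_e)_{e\in\tilde E}$ are independent Gamma$(\alpha_e)$, after a trajectory that has crossed each edge $e=(i,j)$ a total of $N_{i,j}(n)$ times, the relevant exponents of $W_{i,j}$ appearing in the product of transition probabilities combine with the crossings of the $\star$-symmetric edge $(j^*,i^*)$ — this is exactly where the symmetry condition \eqref{symmetry-VRJP} and the identification of $e$ with its $\star$-image in $\tilde E$ is used — so that $W_e$ enters to a power $\alpha_e-1+N_{i,j}(n)+N_{j^*,i^*}(n)$ times an exponential factor coming from the holding-time integrals and the $e^{T_{j^*}(t)}$ factors. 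Recognizing this as an unnormalized Gamma density in $W_e$, the $W_e$-integral produces precisely $\Gamma(\alpha_e(n))$-type factors, i.e. $\alpha_{i,j}+N_{i,j}(n)+N_{j^*,i^*}(n)=\alpha_{i,j}(n)$ in the right places. After cancellation of the exponential local-time factors between numerator and denominator (telescoping over the jumps at each vertex), what remains is a product over steps of $\alpha_{X_k,X_{k+1}}(k)/\sum_{l:\,X_k\to l}\alpha_{X_k,l}(k)$, which is exactly the $\star$-ERRW transition rule.

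The main obstacle is the bookkeeping of the exponential local-time weights $e^{T_{j^*}(t)}$: unlike in the reversible VRJP where the weights are symmetric in the current and target vertices, here each jump brings in a factor depending on $T_{j^*}$, and one must check that the contributions of these factors — accumulated through the holding-time integrals $\int_0^\infty e^{-\lambda s}s^{k}\,ds$ type computations and through the Gamma integrals over $W$ — telescope correctly so that they disappear from the final answer. Concretely, one should track the exponent of $e^{T_i}$ accumulated at vertex $i$ over the whole trajectory (it appears once per jump \emph{out of} $i$ from the total rate, and once per jump \emph{into} some $j$ with $j^*=i$, through $W_{\cdot,j}e^{T_{j^*}}$), and verify these match. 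I would organize this via an induction on $n$, peeling off the last jump, so that the inductive hypothesis absorbs the delicate cancellation; the fact that the $\star$-ERRW weights $\alpha_e(n)$ are themselves $\star$-invariant at every time guarantees the induction closes. The rest (Fubini to justify interchanging $\mathbb{E}_W$ with the trajectory law, finiteness of the relevant integrals since the graph is finite or the trajectory is of finite length) is routine.
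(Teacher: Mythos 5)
This lemma is not proved in the paper itself: the text defers it to \cite{BST20}. Your route, however, is exactly the computation that underlies the closely related proofs that the paper does carry out (the trajectory density \eqref{proba_traject}--\eqref{density-A}, Proposition~\ref{exchangeability}, Lemma~\ref{randomized-mixture}), and it is sound. Concretely: for a fixed discrete path and fixed jump times, the survival terms integrate to $\demi\sum_{(i,j)\in E}W_{i,j}(e^{T_i+T_{j^*}}-1)$, so each $W_e$, $e\in\tilde E$, appears with exponent $\alpha_e-1+N_{i,j}(n)+N_{j^*,i^*}(n)$ against $e^{-W_e e^{T_i+T_{j^*}}}$; the Gamma integral yields the rising factorials $\prod_e \alpha_e(\alpha_e+1)\cdots(\alpha_e+N_e(n)-1)=\prod_k \alpha_{\sigma_k,\sigma_{k+1}}(k)$ together with the factor $e^{-(\alpha_e+N_e(n))(T_i(t_n)+T_{j^*}(t_n))}$. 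The remaining integral over holding times factorizes because the total exponent is linear in them, and the coefficient of the $m$-th holding time (spent at $v=\sigma_{m-1}$) is $\#\{l\ge m:\sigma_{l-1}=v\}+\#\{l\ge m:\sigma_l=v^*\}-\sum_{j}\alpha_{v,j}(n)=-\sum_j\alpha_{v,j}(m-1)$, which is precisely the counting you describe ("once per jump out of $i$, once per arrival at a $j$ with $j^*=i$"), using the $\star$-invariance of $\alpha(n)$; inverting these coefficients produces the ERRW normalizers, so the product reconstitutes the $\star$-ERRW path probability. Thus your "telescoping" step is not a pointwise cancellation but this counting identity after the holding-time integration; since you have identified exactly the right bookkeeping and the Fubini/finiteness issues are indeed routine on a finite path, the proposal is correct and essentially coincides with the argument of \cite{BST20} (and of Theorem~1 of \cite{ST15} in the non-directed case).
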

\section{The limiting manifold}\label{s_limiting}
We will use the following notation :
$$
\hhh_0=\{(u_i)_{i\in V}, \;\; \sum_{i\in V} u_i =0\}, \;\;\; \sss_0=\sss\cap \hhh_0.
$$
and we remark that $\aaa\subset \hhh_0$.
For the $\star$-symmetric conductances $(W_e)_{e\in E}$ 
we let $\lll^W_0$ be the following manifold (which, as we will see later, is smooth)
$$
\lll_0^W=\{u\in\hhh_0, \;\; \dive(W^u)(i)=0 \hbox{ for all $i\in V$}\},
$$
where $W^u$ is the element of $(0,\iy)^E$ defined in \eqref{Wu}.
The following lemma, proved in Appendix~\ref{pflemconv}, asserts that $\lll_0^W$ is the manifold of limiting values of the local time.
\begin{lemma}\label{Convergence}
For all $a\in\aaa$, under $\P_{i_0}^{W^a}$, for all $i\in V$,
$$
 U_i:= \lim_{t\to \infty} \left(a_i+T_i(t)-t/N\right) \tx{ exists a.s.}
$$
and $U:=(U_i)_{i\in V}\in \lll^W_0$ a.s..
\end{lemma}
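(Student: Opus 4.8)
The plan is to decompose the evolution of the local time into a drift term and a martingale term, and to show that the martingale part is bounded enough to control the drift toward the manifold $\lll_0^W$. First I would record that, for each vertex $i$, the occupation time $T_i(t)$ increases only while $X_t=i$, and between jumps the process spends an exponential amount of time at each vertex with parameter equal to the total jump rate out of it. The key observation is that the jump rates $W^{T(t)}_{i,j}=W_{i,j}e^{T_i(t)+T_{j^*}(t)}$ grow exponentially in the occupation times, so the process cannot get stuck: the total time spent at $i$ up to the $n$-th visit of $i$ behaves like a sum of exponentials whose parameters blow up, forcing $T_i(t)\to\infty$ on the event that $i$ is visited infinitely often, and in fact $t\to\infty$ forces every vertex to be visited infinitely often (by the strong connectivity assumption on $\ggg$) so that all $T_i(t)\to\infty$ with $T_i(t)/t$ converging. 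The natural candidate limit is $T_i(t)/t\to 1/N$ where $N=|V|$, and $U_i=\lim_t (T_i(t)-t/N)$.

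Next I would set up the martingale. Consider the exponential functionals $M_i(t)$ built from the number of exits from $i$ minus the integral of the exit rate, and symmetrically the number of entries into $i$; more precisely, for each directed edge $(i,j)$ the process $N_{i,j}(t)-\int_0^t \indic_{X_u=i}W^{T(u)}_{i,j}\,du$ is a martingale (compensated jump counting process). Summing these with the signs dictated by the divergence operator, one gets that
\[
\dive(N(t))(i) = \sum_{e:\underline e=i} N_e(t) - \sum_{e:\overline e = i} N_e(t)
\]
differs from a genuine martingale by the compensator $\int_0^t \left(\sum_{j:i\to j}\indic_{X_u=i}W^{T(u)}_{i,j} - \sum_{j:j\to i}\indic_{X_u=j}W^{T(u)}_{j,i}\right)du$. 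On the other hand $\dive(N(t))(i)$ is, up to a bounded term (counting whether $X_t=i$ versus the start), equal to $\indic_{i=i_0}-\indic_{i=X_t}$, hence bounded. So the compensator plus a bounded martingale stays bounded, which after dividing by $t$ and using $T_i(t)/t\to 1/N$ will force
\[
\sum_{j:i\to j}W_{i,j}e^{U_i+U_{j^*}} = \sum_{j:j\to i}W_{j,i}e^{U_j+U_{i^*}},
\]
i.e. $\dive(W^U)(i)=0$ — this is exactly the defining relation of $\lll_0^W$. The normalization $\sum_i U_i=0$ comes from $\sum_i T_i(t)=t$, so $\sum_i(T_i(t)-t/N)=0$ identically.

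The main obstacle will be step one: showing that the limits $U_i$ actually exist (not merely that subsequential limits lie in the manifold) and that $T_i(t)/t\to 1/N$. The exponential self-attraction makes this plausible — once $U_i$ would be "too negative" the rates into $i$ dominate and pull $T_i$ up, and vice versa — but turning this into a clean a.s. convergence argument requires care. I would handle it by working on the discrete skeleton $(\tilde X_n)$ and the holding times: the holding time at the $n$-th jump, given the past, is exponential with rate $\sum_{j:X_n\to j}W_{X_n,j}e^{T_{X_n}+T_{j^*}}\ge c\, e^{T_{X_n}}$, and a Borel–Cantelli / comparison argument shows the total contribution of all but the last excursion at level $T_i$ is summable, pinning down $T_i(t)$ asymptotically linear with the common slope $1/N$. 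An alternative, cleaner route is to invoke the representation of the \sVRJP as a (randomized) mixture of Markov jump processes established later in the paper and deduce convergence of occupation-time frequencies from the ergodic theorem for the mixing Markov chain on the finite state space $V$, but since Lemma~\ref{Convergence} is used to build that representation I would avoid circularity and give the direct argument. Once existence and the slope are secured, the martingale computation in the previous paragraph closes the proof, the interchange of limit and compensator being justified by dominated convergence using the boundedness of $\dive(N(t))(i)$ and the a.s. convergence of the integrands.
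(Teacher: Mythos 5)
The core of this lemma --- the a.s. existence of the limits $U_i=\lim_{t\to\infty}(T_i(t)-t/N)$ --- is precisely the part you leave as a sketch, and the sketch does not deliver it. A Borel--Cantelli/comparison bound on holding times along the discrete skeleton can at best show that every site is visited infinitely often and that the local times grow, but the statement requires convergence of the \emph{difference} $T_i(t)-t/N$ to a finite limit, which is much finer than $T_i(t)/t\to 1/N$ (itself already nontrivial for a self-interacting process, and asserted rather than proved in your plan). Your martingale step cannot supply this: it only yields $\dive(W^U)=0$ \emph{after} the limits $U_i$ are known to exist, because the identification of the compensator, $\int_0^t\indic_{X_u=i}W_{i,j}e^{T_i(u)+T_{j^*}(u)}du\sim\tfrac12 W_{i,j}e^{U_i+U_{j^*}}e^{2t/N}$, uses exactly the convergence of $T(t)-t/N$. (In that step the correct normalization is $e^{2t/N}$, not $t$, and the compensated counting-process martingale is not bounded --- its bracket grows like $e^{2t/N}$ --- so one needs a martingale law of large numbers; these points are fixable, but the sentence ``the compensator plus a bounded martingale stays bounded'' is not correct as written.)

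The paper obtains existence of the limit by the route you discarded, and there is no circularity in it: partial exchangeability of the randomized \sVRJP (Proposition~\ref{exchangeability}) is proved by a direct path-density computation and does not rely on Lemma~\ref{Convergence}; only the explicit mixing measure of Theorem~\ref{main} does. By Diaconis--Freedman, the time-changed randomized process is a mixture of Markov jump processes, so $\ell_i^Z(s)/s$ converges, which gives convergence of the symmetric part $T_i(t)+T_{i^*}(t)-2t/N$; the remaining (antisymmetric) part is controlled via the projection of Lemma~\ref{transversal} together with a Poisson-point-process coupling showing $W_{i,j}e^{T_i(t)+T_{j^*}(t)}\sim N_{i,j}(t)$, the a.s. convergence of the crossing frequencies (again from partial exchangeability), and the identity $\dive(N(t))=\delta_{i_0}-\delta_{X_t}$. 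Finally --- a point your proposal does not address at all --- the lemma is stated under the non-randomized law $\P^W_{i_0}$, for which partial exchangeability fails; the paper transfers the result from the randomized law by restarting at the first return to $i_0$ after the cover time, where the local time has a density, so that the law of the process after that time is absolutely continuous in the initial local time and the randomized result applies for Lebesgue-almost every shift. Without either this transfer or a genuine direct proof of convergence, your argument has a gap at its center.
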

\begin{remark} Note that for $a=0$, $U$ is the limit of the centered local time of the \sVRJP under $\P_{i_0}^{W}$. In general, in the sequel, $U$ is defined as in Lemma \ref{Convergence}, as this will turn out to be convenient for notational purposes.
\end{remark} 
The following result shows that $\aaa$ is transversal to the manifold $\lll^W_0$.
\begin{lemma}[Projection on $\lll^W_0$ parallel to $\aaa$] \label{transversal}
\label{lem:proj}
Let $u$ be in $\hhh_0$. There exists a unique element $a\in \aaa$ such that
$$
u-a\in \lll_0^W.
$$
Moreover, $a^\star$ is the unique minimizer of the strictly convex 
function $J_u: \hhh_0\mapsto \R$
$$
J_u(v)= 
\sum_{i\in V}\sum_{j,i\to j}W_{i,j}^u(e^{v_j-v_i}-1).
$$
We denote by $p_{\lll_0^W}(u):=u-a $ this projection on $\lll_0^W$ parallel to $\aaa$.
\end{lemma}
{Lemma \ref{lem:proj} gives a natural parametrization of $\lll_0^W$,
by orthogonal projection on the subspace $\sss_0$:
\begin{eqnarray*} 
\label{dsigma}
\lll_0^W& \rightarrow& \sss_0
\\
h&\mapsto & s= \ppp_{\sss} (h)= \demi (h+h^\star).
\eeq
This parametrization is inverted by the application $h=p_{\lll_0^W}(s)$, since
$s$ is in the fiber $h+\aaa$. Hence it gives a natural positive measure on $\lll_0^W$, denoted by $d\sigma_{\lll_0^W}$, and defined by:
\begin{eqnarray}\label{volume}
d\sigma_{\lll_0^W}= 
(p_{\lll_0^W})_* (d\lambda_{\sss_0})
\end{eqnarray}
where  $(p_{\lll_0^W})_*(d\lambda_{\sss_0})$
is the push-forward
of the euclidean volume measure $\lambda_{\sss_0}$ on $\sss_0$ by the projection on $\lll_0^W$ restricted to $\sss_0$. More precisely we have, for any measurable subset $B$ of $\lll_0^W$, 
\begin{eqnarray}\label{volume_bis}
\sigma_{\lll_0^W}(B)=\lambda_{\sss_0}((p_{\lll_0^W})^{-1}(B))=\lambda_{\sss_0}(\ppp_S(B)).
\end{eqnarray} 
}

\section{Randomization of the initial local time and exchangeable time scale}\label{s_randomization}
\label{pervrjp}
Contrary to the VRJP, the $\star$-VRJP 
is in general not a mixture of time changed Markov Jump Processes. 
Indeed, otherwise this would imply that,
taking expectation with respect to random conductances $W$ as in Lemma~\ref{lem_ERRW-VRJP}, the \sERRW were a mixture of 
random walks, hence partially exchangeable, but it is not the case when the weights do not have the
property of null divergence, see Proposition \ref{bac}. Nevertheless, the \sVRJP has some exchangeability property
after randomization of the initial local times. The results of this Section are proved in Section~\ref{ss_Proof_random}. 

For $i_0\in V$, let $\nu^W_{\aaa,i_0}$ be the measure on $\aaa$ defined by
\begin{eqnarray}\label{nu_a}
\nu^{W}_{\aaa,i_0}
(da) =
{1\over \sqrt{2\pi}^{\vert V_1\vert}}
e^{a_{i_0^\star}}e^{-\demi\sum_{(i,j)\in E} W_{i,j}(e^{a_i+a_{j^\star}}-1)}(\prod_{i\in  V_1} da_i),
\end{eqnarray}
and let
\begin{eqnarray}\label{FWi}
F^W_{i_0}:=
\int_\aaa \nu_{\aaa,i_0}^W(da)
\end{eqnarray}
be its normalization constant.
\begin{lemma}\label{finitness}
Under the assumption that $\ggg$ is strongly connected, we have $F^W_{i_0}<\infty$.
\end{lemma}
 We will be interested in the \sVRJP starting from an antisymmetric random initial local time $A$ distributed according to ${1\over F^W_{i_0}} \nu^{W}_{\aaa,i_0}$, i.e. the \sVRJP with conductances $W^A$. To that purpose, we define the probability measure $\overline \P^W_{i_0} (\cdot )$ on the canonical space $ \ddd([0,\infty),V)\times \aaa$ by
$$
\int \phi(\w,a) \overline \P^W_{i_0} (d\w,da)= \int_{\ddd([0,\infty), V)\times \aaa} \phi(\w,a) \P^{W^a}_{i_0}(d\w) {1\over F^W_{i_0}} \nu^{W}_{\aaa,i_0}(da),
$$
for any measurable bounded test function $\phi$. On the canonical space $ \ddd([0,\infty), V)\times \aaa$, we define $(X_t)_{t\ge 0}$ as the canonical random variables on path space $\ddd([0,\infty), V)$,
and  $A$ as the random variable on $\aaa$ equal to the second coordinate. Hence, under $\overline \P^W_{i_0} (\cdot )$, $A$ is distributed according to  ${1\over F^W_{i_0}}\nu^{W}_{\aaa,i_0}$, and conditionally on $A$, $(X_t)_{t\ge 0}$ is distributed according to $\P^{W^A}_{i_0}$. We call the associated process $(X_t)$ under  $\overline \P^W_{i_0}$, the {\it randomized \sVRJP}. 

By a slight abuse of notation, the marginal of $\overline \P^W_{i_0} (\cdot )$ on the canonical space $\ddd([0,\infty), V)$ will also be denoted by $\overline \P^W_{i_0} (\cdot )$. It will be clear from the context whether $\overline \P^W_{i_0} (\cdot )$ represents the probability on $\ddd([0,\infty), V)\times \aaa$ or its marginal on $\ddd([0,\infty), V)$.

The following Lemma \ref{flow} shows the stability of the family of laws ${1\over F^W_{i_0}} \nu^{W}_{\aaa,i_0}$ under the posterior distribution of $ \overline \P^{W}_{i_0}$, which enables to deduce in Corollary~\ref{cor_jump_rate} the jump rate of the annealed \sVRJPse.

\begin{lemma}\label{flow}
Under  $\overline \P^{W}_{i_0}$, conditionally on $\sigma\{X_s, \; s\le t\}$, 
$(A_i)$ is distributed according to ${1\over F^{W^{T(t)}}_{X_t}}\nu^{W^{T(t)}}_{\aaa,X_t}$.
\end{lemma}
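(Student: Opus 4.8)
The plan is to prove Lemma~\ref{flow} by a Markov-type argument combined with an explicit computation of the Radon--Nikodym factor that appears when one runs the $\star$-VRJP up to time $t$. First I would fix $t>0$ and a bounded test function, and compute $\overline{\P}^W_{i_0}$-expectations by unfolding the definition: integrate over $a\in\aaa$ against $\frac1{F^W_{i_0}}\nu^W_{\aaa,i_0}(da)$, and for each $a$ run the $\star$-VRJP $\P^{W^a}_{i_0}$. The key structural fact I would use is the ``shift'' identity for the jump rates \eqref{jump-rates}: the $\star$-VRJP with conductances $W^a$ started at $i_0$, observed at time $t$ with accumulated local time $T(t)$, has jump rates $W^a_{i,j}e^{T_i(t)+T_{j^*}(t)} = W_{i,j}e^{(a_i+T_i(t))+(a_{j^*}+T_{j^*}(t))} = (W^{a+T(t)})_{i,j}$. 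Equivalently, conditionally on the trajectory up to time $t$, the future of the $\star$-VRJP with conductances $W^a$ is a $\star$-VRJP with conductances $W^{a+T(t)}$ started from $X_t$ --- and by the semigroup structure $W^{a+T(t)} = (W^{T(t)})^a = (W^a)^{T(t)}$, so it is also the $\star$-VRJP with conductances $(W^{T(t)})^{a}$. This is the Markov/restart property of the (deterministic-environment) VRJP at the exponential time scale, which I would either cite from the construction in \cite{ST15,BST20} or verify directly from \eqref{jump-rates}.

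Next I would identify the conditional law of $A$ given $\sigma\{X_s,\ s\le t\}$. By Bayes' rule, this density with respect to $\frac1{F^W_{i_0}}\nu^W_{\aaa,i_0}(da)$ is proportional to the likelihood of the observed path segment under $\P^{W^a}_{i_0}$. Writing that likelihood explicitly: for a path that makes jumps across edges $e_1,\dots,e_n$ at times $0<t_1<\dots<t_n\le t$ with holding behaviour determined by $T(s)$, the density of $\P^{W^a}_{i_0}$ (against a reference such as $\P^{W}_{i_0}$, or against a fixed Poisson-type reference) is
\[
\Big(\prod_{k=1}^n \frac{(W^a_{e_k}) e^{\cdots}}{\cdots}\Big)\exp\!\Big(-\!\int_0^t \sum_{j:\,X_s\to j} W^a_{X_s,j}e^{T_j(s)+\cdots}\,ds\Big),
\]
and the point is that, after absorbing all factors that do not depend on $a$ into the normalization, what remains is exactly the exponential of (i) a linear term $a_{i_0^*}$ coming from the telescoping of $e^{a_i+a_{j^*}}$ factors along the path together with the local-time integral, and (ii) the quadratic-type term $-\frac12\sum_{(i,j)\in E}W^{T(t)}_{i,j}(e^{a_i+a_{j^*}}-1)$ that is precisely the defining exponent of $\nu^{W^{T(t)}}_{\aaa,X_t}$ with the conductances updated to $W^{T(t)}$ and the base point moved to $X_t$. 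Here one uses the null-divergence-type bookkeeping: the increments of local time contribute $\dive$-like terms that, combined with $e^{a_{i_0^*}}$ from the prior, collapse to $e^{a_{X_t^*}}$ (the claim should be read with the understanding that the endpoint vertex replaces $i_0$, so $i_0^*$ becomes $X_t^*$). So the posterior is $\frac1{F^{W^{T(t)}}_{X_t}}\nu^{W^{T(t)}}_{\aaa,X_t}$, up to checking that the normalization constant is indeed $F^{W^{T(t)}}_{X_t}$, which follows because a probability density must integrate to $1$.

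The main obstacle I expect is the careful bookkeeping in the likelihood computation: tracking exactly how the $e^{T_i(s)}$ weights inside the integrated-rate term and the product over jumps recombine, once one substitutes $W^a$ for $W$, into the clean form $e^{a_{X_t^*}}e^{-\frac12\sum_{(i,j)}W^{T(t)}_{i,j}(e^{a_i+a_{j^*}}-1)}$ times an $a$-independent factor. The linear term is the delicate piece --- it requires that the ``boundary'' contributions of the antisymmetric shift $a$ along the trajectory sum to $a_{X_t^*}-a_{i_0^*}$ (a discrete Stokes/telescoping identity driven by the divergence structure and the antisymmetry $a_i=-a_{i^*}$), and one must check this holds pathwise for every c\`adl\`ag trajectory, including the holding-time contributions. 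A clean way to organize this is to differentiate in $t$: show that $t\mapsto \frac1{F^{W^{T(t)}}_{X_t}}\nu^{W^{T(t)}}_{\aaa,X_t}$ evolves, as a conditional law, according to the same filtering equation that $A$'s conditional law satisfies under $\overline{\P}^W_{i_0}$ (a jump at time $t$ across $(i,j)$ multiplies the unnormalized density by $W_{i,j}e^{a_i+a_{j^*}}$ up to constants, and the drift between jumps matches the $\frac12$-exponent's evolution), and then invoke uniqueness of the solution of that equation with the correct initial condition at $t=0$. Either route reduces to the same elementary-but-fiddly algebra; I would present the direct Bayes computation as the cleaner of the two, relegating the telescoping identity to a short separate verification.
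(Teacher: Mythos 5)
Your plan is correct and follows essentially the same route as the paper's proof: write the path density of the $\star$-VRJP conditional on $A$, simplify the integrated rate to $\tfrac12\sum_{(i,j)\in E}W^A_{i,j}(e^{T_i(t)+T_{j^*}(t)}-1)$, use the antisymmetry $A_i=-A_{i^*}$ so the jump factors telescope to $e^{A_{X_t^*}-A_{i_0^*}}$, and conclude by Bayes that the posterior is proportional to $e^{a_{X_t^*}}e^{-\frac12\sum W^{T(t)}_{i,j}e^{a_i+a_{j^*}}}$, i.e. $\frac1{F^{W^{T(t)}}_{X_t}}\nu^{W^{T(t)}}_{\aaa,X_t}$. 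Only a small imprecision: the linear term $a_{X_t^*}-a_{i_0^*}$ comes solely from telescoping the jump factors, while the holding-time integral's $a$-dependence is entirely absorbed into the quadratic exponent, so no separate boundary contribution from holding times needs checking.
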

\begin{corollary}
\label{cor_jump_rate}
Under $\overline \P^W_{i_0}$, conditionally on the the past at time $t$, if the process $X_t$ is at position $i$, then it 
jumps to $j$ such that $i\to j$ at a rate 
$$
W_{i,j}^{T(t)} {F^{W^{T(t)}}_j\over F^{W^{T(t)}}_i}.
$$
\end{corollary}


The following Proposition \ref{exchangeability} shows that, with that randomization of initial local time, the \sVRJP becomes partially exchangeable after a time-change, which is
a counterpart to the time change introduced in \cite{ST15}, Theorem~2.
\begin{proposition}\label{exchangeability}
Let $(C(t))_{t\ge 0}$ be the increasing functional of the process $(X_t)$ defined by
$$
C(t)=\demi\sum_{i\in V} (e^{T_i(t)+T_{i^\star}(t)}-1).
$$
Let $(Z_s)_{s\ge 0}$ be the time changed process defined by
$$
Z_s=X_{C^{-1}(s)}.
$$
Then, under the randomized distribution  $\overline \P^W_{i_0}$,  $(Z_s)$ is partially exchangeable in the sense of \cite{Freedman}, i.e. for any real $h>0$, the discrete time process 
$(Z_{kh})_{k\in \N}$ is partially exchangeable in the sense of Diaconis  and Freedman \cite{diaconis-freedman}. 
\end{proposition}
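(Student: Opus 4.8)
The plan is to follow the scheme of \cite{ST15}, Theorem~2, and reduce the statement to a mixture representation. Since the law of a finite path $(z_0,\dots ,z_n)$ of a time‑homogeneous Markov chain equals $\prod_{m} P_{z_{m-1},z_m}=\prod_{(i,j)}P_{i,j}^{N_{i,j}}$, it depends on the path only through the numbers $N_{i,j}$ of crossings of the directed edges; hence every Markov chain started from a fixed vertex is partially exchangeable in the sense of \cite{diaconis-freedman}, and this is preserved under mixtures. Consequently, if under $\overline\P^W_{i_0}$ the process $(Z_s)_{s\ge 0}$ is a mixture of time‑homogeneous Markov jump processes, then for every $h>0$ the skeleton $(Z_{kh})_{k\in\N}$ is a mixture of discrete Markov chains, hence partially exchangeable, which is the assertion. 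So it suffices to produce such a mixture representation.

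The next step is to read off the rates of $Z$. A direct differentiation of $C$ shows that, whenever $X_t=i$ (both for $i\in V_0$ and $i\notin V_0$), $\tfrac{d}{dt}C(t)=e^{T_i(t)+T_{i^*}(t)}$. Since, under $\overline\P^W_{i_0}$ and conditionally on $A$, $X$ has jump rate $(W^A)^{T(t)}_{i,j}=W_{i,j}e^{A_i-A_j}e^{T_i(t)+T_{j^*}(t)}$ from $i$ to $j$, the time‑changed process $Z=X_{C^{-1}}$ jumps, at time $s$ with $t:=C^{-1}(s)$, from $i$ to $j$ at rate
\[
r_{i,j}(s)=W_{i,j}\,e^{A_i-A_j}\,e^{T_{j^*}(t)-T_{i^*}(t)} .
\]
Writing $\ell_k(s)=\int_0^s\indic_{Z_u=k}\,du$ for the occupation measure of $Z$, the change of variables $u=C(t)$ together with the fact that $e^{T_k+T_{k^*}}$ increases only while $X$ sits at $k$ or at $k^*$ gives the clean identity
\[
e^{T_k(C^{-1}(s))+T_{k^*}(C^{-1}(s))}=1+\ell_k(s)+\ell_{k^*}(s),\qquad k\in V .
\]
In particular $C(t)\to\infty$ (using $\max_k(T_k+T_{k^*})\ge 2t/N$ on finite graphs), so $t=C^{-1}(s)\to\infty$; applying Lemma~\ref{Convergence} to $\P^{W^A}_{i_0}$, the limit $U_k:=\lim_t(T_k(t)-t/N)$ exists $\overline\P^W_{i_0}$‑a.s.\ with $U+A\in\lll^W_0$, whence $r_{i,j}(s)\to q_{i,j}:=W_{i,j}e^{A_i-A_j}e^{U_{j^*}-U_{i^*}}$ a.s.

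The heart of the argument is to show that, conditionally on $(A,U)$ (more precisely on the tail $\sigma$‑field of $Z$), $(Z_s)$ is exactly the Markov jump process with rates $(q_{i,j})$. Here I would use Lemma~\ref{flow}: the family $\{\tfrac1{F^W_{i_0}}\nu^W_{\aaa,i_0}\}$ is stable under conditioning on the past, so conditionally on $\sigma\{X_u,\ u\le t\}$ the future of $X$ is again a randomized $\star$‑VRJP with conductances $W^{T(t)}$ from $X_t$; since $(W^{T(t)})^{a}=W^{T(t)+a}$ and $W^{T(t)}$ is recoverable from $(Z_u)_{u\le C(t)}$, this becomes a self‑similar restart property for $Z$: conditionally on $\sigma\{Z_u,\ u\le s_0\}$, $(Z_{s_0+s})_{s\ge 0}$ is again a time‑changed randomized $\star$‑VRJP with $Z$‑observable conductances $W^{T(C^{-1}(s_0))}$ started from $Z_{s_0}$. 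Combining this "stationarity after time change" with a martingale‑convergence / $0$–$1$‑law argument as in \cite{ST15} — the relevant observables become martingales once the past is conditioned out, and their limits generate the tail — one gets that the conditional law of $(Z_s)$ given $(A,U)$ is the Markov jump process with generator $q_{i,j}$; one checks that this generator has invariant measure proportional to $(e^{U_i+U_{i^*}})_{i\in V}$, consistent with the asymptotics of $\ell_k(s)/s$ obtained above. This is the mixture representation of $(Z_s)$, the mixing being carried by the law of $(A,U)$, and the proposition follows.

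The main obstacle is precisely the conditional Markov property: at finite $s$ the rates $r_{i,j}(s)$ genuinely fluctuate with the occupation profile, and one must show they freeze to $q_{i,j}$ once $(A,U)$ is fixed — equivalently, that the time‑changed randomized process is stationary in the right sense. This is exactly where the special form of $\nu^W_{\aaa,i_0}$ and the identity behind its normalization $F^W_{i_0}$ are indispensable, via Lemma~\ref{flow}; without the randomization the $\star$‑VRJP is not partially exchangeable at all, as recalled after Proposition~\ref{bac}. A secondary technical point is that $Z$ need not be recurrent on infinite graphs, so the convergence and tail arguments should be carried out first on finite subgraphs and then transferred by exhaustion.
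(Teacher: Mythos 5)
Your reduction is fine as far as it goes: if under $\overline \P^W_{i_0}$ the process $(Z_s)$ were known to be a mixture of time-homogeneous Markov jump processes, then every skeleton $(Z_{kh})_k$ would be a mixture of Markov chains and hence partially exchangeable. But the way you propose to obtain the mixture representation has a genuine gap, and in fact inverts the logical order of the paper. The step you describe as "the heart of the argument" --- that conditionally on $(A,U)$ (or on the tail $\sigma$-field) the process $(Z_s)$ is \emph{exactly} the Markov jump process with the limiting rates $q_{i,j}=W_{i,j}e^{A_i-A_j}e^{U_{j^*}-U_{i^*}}$ --- is precisely Theorem~\ref{main}~(ii) of the paper, a much stronger statement than the proposition at hand. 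Convergence of the instantaneous rates $r_{i,j}(s)\to q_{i,j}$ (which you do establish correctly, via $e^{T_k+T_{k^*}}=1+\ell_k+\ell_{k^*}$ and Lemma~\ref{Convergence}) does not imply that conditioning on the limit freezes the dynamics: at every finite $s$ the rates genuinely depend on the occupation profile, and there is no generic martingale/$0$--$1$-law argument that upgrades rate convergence to a conditional Markov property. In \cite{ST15}, and in this paper, the mixture representation is \emph{deduced from} partial exchangeability (via Diaconis--Freedman/Freedman), or else proved by the heavy Feynman--Kac machinery of Section~6 (Lemma~\ref{lem_sol_inv} and the Gaussian asymptotics), which in turn uses the normalization identity for $\mu^W_{i_0}$; Lemma~\ref{flow} alone, plus "stationarity after time change", does not deliver it. So your argument is circular in spirit: you assume (in disguise) the very representation that the proposition is designed to make available.

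For comparison, the paper's proof is a direct path-density computation: conditionally on $A$ the density of a trajectory is \eqref{proba_traject}; integrating $A$ out against $\nu^W_{\aaa,i_0}$ gives \eqref{density-*0}; the telescoping cancellation \eqref{cancellation} and the covariance property of Proposition~\ref{Fa} let one replace $T(t)$ by its symmetric part $M(t)$; after the time change $s=C(t)$ the resulting density of $(Z_u)_{u\le s}$ depends only on the numbers of crossings of the (quotient) edges and on the final local times $(\ell^Z_i(s))$, which is exactly the criterion for partial exchangeability of the continuous-time process (\cite{Zeng16}, Proposition~1). If you want a correct proof along your own lines, you would have to supply this kind of explicit computation (or an equivalent) rather than appeal to a conditional Markov property that is only available downstream of the proposition. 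A minor additional point: the graph here is finite, so your concerns about exhaustion by finite subgraphs are unnecessary.
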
 
\begin{remark}
This partial exchangeability implies by \cite{diaconis-freedman} that the process $(Z_s)_{s\ge 0}$ under the randomized distribution $\overline \P^W_{i_0}$ is a mixture of Markov jump processes. This is the object of Theorem~\ref{main}, where we make explicit the law of the mixture of Markov jump processes.
\end{remark} 
\begin{remark}
Note that, compared to the original time-change for the VRJP in \cite{ST15}, Theorem~2, there is an extra $\demi$ in the definition of $C(t)$, which will lead to a multiplicative factor $2$ in the jump rate in Theorem~\ref{main}~(ii), which applied to the VRJP is $W_{i,j} e^{U_j-U_i}$, instead of $\demi W_{i,j}e^{U_j-U_i}$ in \cite{ST15}. This choice removes several annoying factors $\demi$ in the statements. 
\end{remark}

The following lemma states that the  $\star$-ERRW is a mixture of these randomized $\star$-VRJP, under the condition that the weights of the $\star$-ERRW makes it partially exchangeable.
\begin{lemma}
\label{randomized-mixture}
Let $(\alpha_{e})_{e\in E}$ be $\star$-symmetric positive weights which satisfy divergence-free condition \eqref{divergence}.
Assume that $W=(W_e)_{e\in \tilde E}$ are independent Gamma random variables with weights $(\alpha_{e})_{e\in \tilde E}$ and that $(W_e)_{e\in E}$ are the associated $\star$-symmetric conductances. If $A$ has the law ${1\over F_{i_0}^W} \nu_{\aaa,i_0}^W$, then $W^A$
has the same law as $W$. It implies by Lemma~\ref{lem_ERRW-VRJP} that after expectation with respect to $(W_e)$, the discrete time process associated with the randomized \sVRJP is a \sERRWse.
\end{lemma}
\begin{remark}
The stability property in Lemma~\ref{randomized-mixture} cannot hold when the divergence condition \eqref{divergence} is not satisfied, since otherwise it would imply by Proposition \ref{exchangeability} above that the $\star$-ERRW is always
exchangeable, irrespective of that divergence condition.
\end{remark}
\begin{remark} In the classical reversible model, the VRJP was instrumental in the analysis of the asymptotic behavior of the ERRW. In this respect, the previous Lemma is important, since it means that the exchangeable \sERRW can be described by randomized \sVRJP with independent gamma random conductances.
\end{remark}

\section{The limiting distribution and representation as mixtures}
\subsection{The case of the randomized \sVRJP}
\label{sec:limdis}
Let us first introduce some notation. For $u\in \uuu_0^W$, we denote by $K^u=(K^u_{i,j})_{i,j\in E}$  the infinitesimal generator
of the Markov jump process at rate $W^u_{i,j}$ from $i$ to $j$, 
i.e. we let $K^u=(K^u_{i,j})_{i,j\in E}$ be the matrix defined by
\begin{align}\label{Ku}
K^u_{i,j}=
\begin{cases}
W_{i,j}^u,& \hbox{ if $i\neq j$,}
\\
-\sum_{l\in V}W_{i,l}^u, & \hbox{ if $i=j$.}
\end{cases}
\end{align}
 We set
$$
D(W):=\sum_{T\in \ttt_{j_0}} \prod_{(i,j) \in T} W_{i,j},
$$
where the sum is on the set $\ttt_{j_0}$ of {\bf directed}  spanning trees $T$ of the graph {\bf rooted} at some vertex $j_0\in V$. By the matrix-tree theorem, we also have
$$
D(W^u)= \det(-K^u)_{V\setminus\{j_0\},V\setminus\{j_0\}}
$$
where $\det(-K^u)_{V\setminus\{j_0\},V\setminus\{j_0\}}$ is the diagonal minor obtained after removing the line and the column $j_0$ of the matrix $-K^u$.
The value of this determinant does not depend on the choice of $j_0$ since the sums on any line or column of $K^u$ is null, using $u\in \uuu_0^W$.

We denote by
$$
\hbox{det}_\aaa(K^u)
$$
the determinant of the linear operator $\ppp_{\aaa} K^u \ppp_\aaa$
restricted to the subspace $\aaa$, where $\ppp_\aaa$ is the projection on $\aaa$, see \eqref{Projection}.

\begin{definition}\label{def_mui0W}
We introduce the following positive measure on the manifold $\lll_0^W$:
\begin{eqnarray}
\label{mui0W}
\mu_{i_0}^{W}(du):=C_{\ggg} e^{u_{i_0^\star}-\sum_{i\in V_0} u_i} e^{-\demi\sum_{i\to j} W_{i,j} (e^{u_{j^\star}-u_{i^\star}}-1)} {\sqrt{D(W^u)}\over \det_{\aaa}( -K^u)}\sigma_{\lll_0^W}(du),
\end{eqnarray}
where
$$C_{\ggg}:={{\sqrt{\vert V\vert }{\sqrt{2}^{-\vert V_1\vert}}}\over\sqrt{2\pi}^{\vert V_0\cup V_1\vert-1}},$$
and $\sigma_{\lll_0^W}$ is the volume measure on the manifold $\lll_0^W$ defined in Section~\ref{s_limiting}.
\end{definition}

\begin{theorem} \label{main}

{\bf(i)} Under $\overline \P^W_{i_0}$, the following limit exists a.s.
$$
U_i:=\lim_{t \to \infty} \left( A_i+T_i(t)-t/N \right),
$$
and $U\in \lll_0^W$ is distributed according to
$$
{1\over F^W_{i_0}}\mu_{i_0}^{W}(du).
$$

{\bf(ii)} Under $\overline \P^W_{i_0}$, conditionally on $U$,  the \sVRJP in exchangeable time scale, $(Z_t)_{t\ge 0}$, is a Yaglom reversible Markov jump process with jump rate from $i$ to $j$ equal to
$$
W_{i,j} e^{U_{j^\star}-U_{i^\star}}.
$$
\end{theorem}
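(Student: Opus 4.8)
The plan is to combine the partial exchangeability of the time-changed process (Proposition \ref{exchangeability}), the convergence of the shifted local time (Lemma \ref{Convergence} applied to the conductances $W^A$), and an explicit change-of-variables computation that identifies the pushforward of $\overline\P^W_{i_0}$ under $(A,W,\text{limiting local time})$ with the measure $\mu_{i_0}^W$. For part (ii), I would proceed as for the classical VRJP: on the event $\{U=u\}$, the Markov property should emerge from the stability lemma (Lemma \ref{flow}) applied along the exchangeable time scale. Concretely, I would first show that conditionally on $U$, the time-changed $\star$-VRJP $(Z_s)$ has jump rates converging to the ``frozen'' rates $W_{i,j}e^{U_{j^*}-U_{i^*}}$, and then upgrade this to an exact Markov-jump statement using the de Finetti-type representation guaranteed by Proposition \ref{exchangeability} together with a martingale/convergence argument showing that the random environment governing the mixture is exactly $\sigma(U)$-measurable. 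This step is the direct analogue of \cite{ST15}, Theorem~3, adapted to the $\star$-setting, where the extra ingredient is bookkeeping the $\star$-symmetry of rates (rate from $i$ to $j$ is $W^{T(t)}_{i,j} = W_{i,j}e^{T_i(t)+T_{j^*}(t)}$, which in the limit, after the time change and the shift by $A$, becomes $W_{i,j}e^{U_{j^*}-U_{i^*}}$ once one normalizes by the additive constant; note the asymmetry $U_{j^*}-U_{i^*}$ rather than $U_j-U_i$ is precisely what records the non-reversibility).

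For part (i), here is the computational route I would follow. Start from the definition $\int \phi\, d\overline\P^W_{i_0} = \int_\aaa \left(\int \phi\, d\P^{W^a}_{i_0}\right) \frac{1}{F^W_{i_0}}\nu^W_{\aaa,i_0}(da)$. By Lemma \ref{Convergence} applied to the conductances $W^a$, under $\P^{W^a}_{i_0}$ the quantity $T_i(t)-t/N$ converges a.s.\ to some $U'\in\lll_0^{W^a}$; adding $a_i$ gives $U_i = a_i + U'_i$, and one checks that $U\in\lll_0^W$ because $\lll_0^{W^a}+a = \lll_0^W$ (the divergence-free condition for $W^u$ transforms correctly under the shift $u\mapsto u+a$, since $(W^a)^{u'}_{i,j} = W_{i,j}e^{(a_i+u'_i)+(a_{j^*}+u'_{j^*})} = W^{a+u'}_{i,j}$). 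So the law of $U$ under $\overline\P^W_{i_0}$ is $\frac{1}{F^W_{i_0}}$ times the image of $\nu^W_{\aaa,i_0}(da)\otimes(\text{law of }U'\text{ under }\P^{W^a}_{i_0})$ under $(a,U')\mapsto a+U'$. The next step is to invoke the explicit law of the limiting local time for the (ordinary) VRJP/\sVRJP with fixed conductances $W^a$ — this is exactly the $\star$-analogue of \cite{STZ17}/\cite{ST15}; if such a formula has already been established in the paper or in \cite{BST20} I would cite it, otherwise I would derive it in parallel. The remaining task is then a change of variables from the pair $(a, s'=P_{\sss_0}U')$ (parametrizing $\aaa\times\lll_0^{W^a}$ via Lemma \ref{transversal}) to the single variable $u\in\lll_0^W$ parametrized by $s=P_{\sss_0}(u)\in\sss_0$: one must track the Jacobian of $(a,s')\mapsto s = P_{\sss_0}(a+p_{\lll_0^{W^a}}(s'))$, and show that all the prefactors — the Gaussian-type normalizations $\sqrt{2\pi}^{-|V_1|}$ and $\sqrt{2\pi}^{-(|V_0\cup V_1|-1)}$, the combinatorial constants $\sqrt{|V|}\,\sqrt{2}^{-|V_1|}$, the exponential weights $e^{a_{i_0^*}}$ versus $e^{u_{i_0^*}-\sum_{V_0}u_i}$, the energy terms $e^{-\frac12\sum W_{i,j}(e^{a_i+a_{j^*}}-1)}$ versus $e^{-\frac12\sum W_{i,j}(e^{u_{j^*}-u_{i^*}}-1)}$, the spanning-tree factor $\sqrt{D(W^u)}$, and the operator determinant $\det_\aaa(-K^u)$ — combine to give exactly the expression \eqref{mui0W} for $\mu^W_{i_0}(du)$.

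The main obstacle I anticipate is the Jacobian computation in part (i): one must understand precisely how the projection $p_{\lll_0^W}$ parallel to $\aaa$ interacts with the orthogonal projection $P_{\sss_0}$, and why the determinant of the differential of the map $\lll_0^{W^a}\to\sss_0$ (and of its composition with the $a$-shift) produces exactly the ratio $\det_\aaa(-K^u)/\sqrt{D(W^u)}$ up to the explicit constants. This is where the matrix-tree theorem and the identity $D(W^u)=\det(-K^u)_{V\setminus\{j_0\}}$ enter: the tangent space $T_u\lll_0^W$ is the kernel of the differential of $u\mapsto\dive(W^u)$, which is (up to the $\sum u_i=0$ constraint) controlled by $K^u$ acting on $\sss_0$, while the transversal direction $\aaa$ contributes the factor $\det_\aaa(-K^u)$; the square root on $D(W^u)$ is the familiar $\sqrt{\det}$ coming from a Gaussian integral that was used to linearize the energy term $e^{-\frac12\sum W_{i,j}(\cdots)}$, exactly as in \cite{STZ17}. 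Managing the signature-$(\dim\sss,\dim\aaa)$ bilinear form $\langle\cdot,\cdot\rangle$ correctly throughout — so that $K^u$ is $\langle\cdot,\cdot\rangle$-symmetric but not symmetric for the usual scalar product — is the bookkeeping that makes this delicate. Part (ii), by contrast, I expect to be a relatively routine adaptation of the VRJP argument once part (i) is in hand, the only subtlety being to keep the $\star$-asymmetric rate $W_{i,j}e^{U_{j^*}-U_{i^*}}$ and to verify that the posterior-stability Lemma \ref{flow} indeed pins down the mixing environment to $\sigma(U)$.
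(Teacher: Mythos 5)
Your plan for part (i) has a genuine gap at its central step. You propose to disintegrate $\overline\P^W_{i_0}$ over $a\in\aaa$ and then ``invoke the explicit law of the limiting local time for the \sVRJP with fixed conductances $W^a$'' as a $\star$-analogue of the formulas of \cite{ST15,STZ17}. No such formula exists, and this is precisely the difficulty the randomization is designed to circumvent: for fixed (non-randomized) conductances the \sVRJP is \emph{not} partially exchangeable and is not a mixture of time-changed Markov jump processes (see the discussion at the start of Section~\ref{s_randomization}), so the law of $\lim_t (T(t)-t/N)$ under $\P^{W^a}_{i_0}$ admits no closed expression of the \cite{ST15} type. In fact that law is the content of Theorem~\ref{main_bis}, it involves the non-explicit density $f^{W,u}_{i_0}$ of the a.s.\ limit $B^1(\infty)$, and in the paper it is \emph{deduced from} Theorem~\ref{main}; so your route is circular (or rests on an ingredient that is false as stated). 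Consequently the change-of-variables/Jacobian computation you anticipate, from $(a, P_{\sss_0}U')$ to $u\in\lll_0^W$, never gets off the ground: the factors $\sqrt{D(W^u)}$ and $\det_\aaa(-K^u)$ in \eqref{mui0W} do not arise from such a fibered decomposition. In the paper they come either from Laplace (Gaussian) asymptotics of $\tilde\Psi_\lambda$ and $F^{W^{H(t)}}_{i_0}$ along the trajectory, combined with the identities ${\det}_\aaa(-K)/{\det}_{\sss_0}(-K^{-1})={\det}_{\hhh_0}(-K)=\vert V\vert D(W^U)$, or (in the second proof) from the Jacobian of the map $\Xi_{i_0}$ in Lemma~\ref{lem_chgt_var}.

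The approach that works stays with the randomized law throughout: one computes the Radon--Nikodym derivative of $\overline\P^W_{i_0,\tau}$ with respect to the Markov law $P^{W,u}_{i_0,\tau}$ (Lemma~\ref{lem_sol_inv}), deduces that $(R^{W,u})^{-1}$ is $\Gen^W$-harmonic, builds the Feynman--Kac martingale $e^{i\left<\lambda^*,T(t)\right>}\Psi_\lambda(X_t,T(t))$, and identifies the limit of $\Psi_\lambda(X_t,T(t))$ by the Gaussian asymptotics mentioned above; part (ii) then follows from the same Radon--Nikodym identity once one knows from (i) that ${1\over F^{W^{T(t)}}_{X_t}}\mu^{W^{T(t)}}_{X_t}$ is a probability measure. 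Your sketch for (ii) (partial exchangeability plus identification of the mixing environment through asymptotic empirical jump rates) is a plausible alternative in the spirit of \cite{ST15}, but as written it presupposes the identification of $U$ and of the limiting rates, i.e.\ part (i), which is where your argument breaks down.
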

\begin{remark}\label{identity_mixing}
We deduce from {\bf(i)} that ${1\over F^W_{i_0}}\mu_{i_0}^{W}$ is a probability measure, which implies the following identity of integrals:
$$
\int_{\lll_0^W}  \mu_{i_0}^{W}(du)= F^W_{i_0}=\int_\aaa
\nu_{\aaa,i_0}^W(da).
$$
This is a "non trivial" identity
involving a measure
on the limiting manifold $\lll_0^W$ and a measure on the transversal space $\aaa$. We provide two different proofs of this equality : a probabilistic one using Feynman-Kac formula in this Part I,
and another one is by direct computation, in Part~\ref{partII}, see Corollary~\ref{thm:identity_0_b}. A supersymmetric interpretation of that identity will also be given in a forthcoming work with Swan.
\end{remark}
\begin{remark}
When all points are self-dual, i.e. $V_0=V$ and $V_1=\emptyset$, one can check that the limiting distribution \eqref{mui0W} coincides with the one of Theorem~1 of \cite{ST15}. The difference in the factor $\sqrt{\vert V\vert}$ instead of ${\vert V\vert}$ comes from the choice of the volume measure $\sigma_0^W(du)$: indeed, in the case of a self-dual graph, $\sigma_0^W$ is the Euclidean measure on $\hhh_0=\{(u_i),\; \sum_i u_i=0\}$, whereas in \cite{ST15} we chose the measure $\prod_{i\neq i_0} du_i$ (there is a factor $\sqrt{\vert V\vert}$ between the two).
\end{remark}
\subsection{The case of the (non-randomized) \sVRJPse
}
\label{s_non-randomized}
Recall that $(X_t)_{t\ge 0}$ is the canonical process on the space  $\ddd([0,\infty), V)$, and that on that space, $\P^W_{i_0}$ is the law of the \sVRJP starting at $i_0$ and $\overline\P^W_{i_0}$ the law of the randomized \sVRJPse. 
Besides, $(Z_s)_{s\ge 0}:=(X_{C^{-1}(s)})_{s\ge 0}$ is the time-changed of the canonical process $X$, as defined in Proposition~\ref{exchangeability}. We will denote by $(\ell_i(s))_{i\in V, s\ge 0}$ the local time process of $(Z_s)$:
\begin{eqnarray}\label{loc-time-Z}
\ell_i(s)=\int_0^s \indic_{Z_u=i}\; du, \;\;\; i\in V,\; s\ge 0.
\end{eqnarray}

Given $u\in \uuu_0^W$, we define $P^{W,u}_{i_0}$ as the law on the path space $\ddd([0,\infty), V)$ such that the time-changed process $(Z_t)$ on the graph $\mathcal{\ggg}$, is a Markov jump process starting from $i_0$ and with jump rate from  $i$ to $j$ given by
$$
W_{i,j}e^{u_{j^\star}-u_{i^\star}}.
$$
It is equivalent to say that, under  $P^{W,u}_{i_0}$, the canonical process $(X_t)_{t\ge 0}$ is the jump process which, conditioned on the past at time $t$, jumps from $i$ to $j$, $i\to j$, at rate
$$
W_{i,j}e^{u_{j^\star}-u_{i^\star}} e^{T_i(t)+T_{i^\star}(t)},
$$
see forthcoming Lemma~\ref{canonical-time-change}. With this notation, Theorem~\ref{main}~(ii) means that on $\ddd([0,\infty), V)$, we have the following equality of probabilities
$$
\overline\P^W_{i_0}(\cdot)=\int_{\uuu_0^W} P^{W,u}_{i_0} (\cdot) {1\over F^W_{i_0}}\mu_{i_0}^W(du).
$$
 
We introduce the following functional of the process $(Z_s)$, which plays a crucial role in the analysis of the non-randomized \sVRJP: for all $(\theta_i)_{i\in V}\in \sss$, let
\begin{eqnarray}\label{def_A}
B^\theta_i(s)=\demi \int_0^s {\indic_{\{Z_u=i\}}-\indic_{\{Z_u=i^\star\}}\over \theta_i+\ell_i(u)+\ell_{i^\star}(u)}du, \;\;\; \forall i\in V, \; s\ge 0.
\end{eqnarray}
We denote by $B^\theta(s)=(B^\theta_i(s))_{i\in V}$ the associated random vector, taking values in $\aaa$. We simply write $B(s):=B^1(s)$ when $\theta_i=1$ for all $i\in V$.

Obviously, under $P^{W,u}_{i_0} $, $(Z_s)$ is a Markov jump process with invariant measure $e^{u_i+u_{i^\star}}$, since $u\in\uuu_0^W$. This implies the following simple result.
\begin{proposition}\label{Prop_convergence_B}
Under $P^{W,u}_{i_0} $, a.s., there exists a random vector $B^\theta(\infty)\in \aaa$ such that 
$$
\lim_{s\to\infty} B^\theta(s)=B^\theta(\infty).
$$
Moreover, $B^\theta(\infty)$ has a density  on $\aaa$ that we denote by $f_{i_0}^{W,u,\theta}$. We simply write $f_{i_0}^{W,u}:=f_{i_0}^{W,u,1}$ when $\theta_i=1$ for all $i \in V$.

Besides, $B^\theta(\infty)$ has the following additivity property:
\begin{eqnarray}
\label{add}
B^\theta(\infty)=B^\theta(s)+B^{\theta+\ell(s)+\ell^\star(s)}(\infty)\circ\Theta_s^Z,
\end{eqnarray}
where $\Theta_s^Z$ is the time-shift of the trajectories of $Z$ (i.e. $\Theta^Z_s=\Theta_{C^{-1}(s)}$ if $\Theta_t$ is the time shift on the canonical space).
\end{proposition}

We will be interested in the conditioned law, defined for a.e. $b\in \aaa$,
$$
P^{W,u}_{i_0}\left(\; \cdot\; \vert B^1(\infty)=b\right),
$$
under which $(Z_s)$ is a conditioned Markov process.
The previous conditioned law should be understood as an $h$-process, which has jump rate at time $t$ from $i$ to $j$ given by 
$$W_{ij}^u \frac{f_j^{W,u,1+\ell(t)+\ell^\star(t)}(b-B^1(t))}{f_i^{W,u,1+\ell(t)+\ell^\star(t)}(b-B^1(t))}.$$

We are now ready to state the main theorem concerning the non-randomized \sVRJPse.
\begin{theorem}[Limit theorem for the non-randomized $\star$-VRJP]\label{main_bis}
Given $i_0\in V$, for Lebesgue-a.e. conductances $W$, under the law of the non-randomized $\star$-VRJP $\P^{W}_{i_0}$,  $U$ has law
\begin{eqnarray}\label{density-U_NR}
f_{i_0}^{W,u} \left((u-u^\star)/2\right) \cdot \mu_{i_0}^W(du).
\end{eqnarray}
Conditionally on $U$, the time-changed $\star$-VRJP $Z$ has the law of a Markov jump process $P^{W,u}_{i_0}$ conditioned on $B^1(\infty)=\demi(U-U^\star)$, and thus it has a jump rate  at time $t$ from $i$ to $j$ given by 
$$W_{ij}^{U_{j^\star}-U_{i^\star}} \frac{f_j^{W,U,1+\ell(t)+\ell^\star(t)}(\demi(U-U^\star)-B^1(t))}{f_i^{W,U,1+\ell(t)+\ell^\star(t)}(\demi(U-U^\star)-B^1(t))}.$$
\end{theorem}
{
\begin{remark}  Theorem \ref{main_bis} is an easy consequence of Theorem~\ref{main} by Bayes formula, and is proved in Section \ref{proof:main}. The statement is a.s. in $W$, even though it should be true for all $W$, but the latter would require some regularity of the density $f_{i}^{W,u}(b)$, both in its variable and its parameters. This does not seem completely obvious but should be doable with some more work.
\end{remark}
}

\part{Random Schr\"odinger representation}\label{partII}
\section{Motivation}\label{Motivation}
As explained in Remark~\ref{identity_mixing} above, Theorem~\ref{main} implies the following identity we rewrite for convenience:
\begin{eqnarray}\label{key_identity}
\int_\aaa \nu_{\aaa,i_0}^W(da)=\int_{\lll_0^W}\mu_{i_0}^{W}(du):=F_{i_0}^W.
\end{eqnarray}
In the special case of the VRJP, i.e. when $\star$ is the identity, the left-hand side term is trivially equal to 1 and  $\mu_{i_0}^W(du)$
is the first marginal of the supersymmetric $\H^{2|2}$ model considered by Disertori, Spencer and Zirnbauer \cite{DSZ10}. 

Thus the equality above generalizes the property that the mixing measure of the VRJP is normalized by a constant which does not depend on the weights $W$. That property plays a fundamental role in the analysis of the VRJP and has been proved in three very different ways. The first proof is due to Disertori Spencer and Zirnbauer (\cite{DSZ10}, Theorem (1.4) page 437): it uses that the measure is the marginal of the $\H^{2|2}$ model, and that the $\H^{2|2}$ model is invariant by a supergroup of transformations. The second proof, due to the authors of this paper, is probabilistic (\cite{ST15}, Theorem~2), and is a consequence of the representation of the VRJP as a mixture of Markov jump processes: Theorem~\ref{main} 
above generalizes that approach to the \sVRJPse. The third proof, due to the Sabot, Tarr\`es and Zeng \cite{STZ17}
is based on a direct computation going through a representation of the measure by a random Schr\"odinger operator. 
That representation has proved to be very useful in the investigation of the properties of the VRJP, see below.
In Part~\ref{partII}, we generalize that  representation to the \sVRJPse, several new phenomena appear due to the extra randomization by the measure $\nu_{\aaa,i_0}^W$. 


Before we state the main results of this part, we briefly recall the corresponding results in the standard VRJP case. As explained above, the VRJP corresponds to the case where $\star$ is the identity, in which case we can consider $\ggg=(V,E)$ as a non-directed graph with some conductances on the non-directed edges $(W_e)_{e\in E}$.
Assume that $V$ is finite. If $\beta=(\beta_i)_{i\in V}$ is a function of the vertices we define the Schr\"odinger operator $H_\beta=-W+\beta$ where $W$ is the (symmetric) matrix of conductances (with value $0$ at $(i,j)$ when $\{i,j\}$ is not an edge of the graph) and $\beta$ is the operator of multiplication by $\beta$ (considered as a potential on the graph). We write $H_\beta>0$ when $H_\beta$ is positive definite, in which case we set $G_\beta=(H_\beta)^{-1}$, which has positive coefficients if the graph is connected, as the inverse of an $M$-matrix. In \cite{STZ17}, definition~1, we introduced a probability distribution on the set of potentials $\beta=(\beta_i)_{i\in V}$, which in its simplest form has the following expression
\begin{eqnarray}\label{VRJP_beta}
\nu_V^{W}(d\beta)= {\indic_{H_\beta >0} \over \sqrt{2\pi}^{\vert V\vert}} \indic_{H_\beta >0} {e^{-\demi \left<1, H_\beta 1 \right>}\over \sqrt{\det(H_\beta)}}d\beta,
\end{eqnarray}
where the $1$ above denotes the vector in $\R^V$ equal to 1 in each coordinate.
\footnote{Note that our $\beta$ stands for $2\beta$ in \cite{STZ17}.}
The property 
\begin{eqnarray}\label{key_identity_beta}
\int \nu_V^{W}(d\beta) =1
\end{eqnarray}
 is non-trivial and was proved by direct computation in \cite{STZ17}. It is also related to the identity \eqref{key_identity} above in the  case  of the VRJP, where the left-hand side is 1. Indeed, \cite{STZ17}~Theorem~3 states that, for any $i_0\in V$, the random field $t=(t_i)_{i\in V}$ defined by
$$
e^{t_i}={G_\beta(i_0,i)\over G_\beta(i_0,i_0)},\;\;\; \forall i\in V,
$$
where $\beta$ is a random potential distributed according to $\nu_V^{W}$, is the mixing field of the VRJP starting at $i_0$, rooted at $t_{i_0}=0$: as a consequence, $(u_i)_{i\in V}=\left(t_i-\vert V\vert^{-1}\sum_{j\in V}t_j\right)_{i\in V}$ has the law $\mu_{i_0}^{W}$ of definition \eqref{mui0W}. 

Letac and Weso\l{}owski noticed that the distribution~\eqref{VRJP_beta} belongs to a larger and more natural family of measures, which has a remarkable property of stability by restriction and conditioning (proved independently in \cite{SZ19, LW17}). The representation of the VRJP by a random Schr\"odinger operator was instrumental in several directions, in particular it enabled the representation by Sabot and Zeng of the VRJP as a mixture of Markov processes on infinite graphs and the characterization of the recurrence/transience of the VRJP in terms of the existence of a certain delocalized eigenvector of the Schr\"odinger operator at the ground state \cite{SZ19}, and the proof of a monotonicity property \cite{Poudevigne19}. The random Schr\"odinger representation and its consequences were instrumental in proving recurrence in 2D of the ERRW and the VRJP for any constant reinforcement parameter \cite{SZ19,Sabot21,Kozma_Peled21}, diffusive behaviour in dimensions larger or equal to 3 at weak reinforcement \cite{SZ19}, and the uniqueness of the phase transition in dimension $d\ge 3$ \cite{Poudevigne19}.

We generalize below the construction of the random potential $\beta$ of \eqref{VRJP_beta} to the case of the \sVRJPse. The potential $\beta=(\beta_i)_{i\in V}$ lives on the space $\sss$ of $\star$-symmetric functions on $V$, defined in \eqref{S}, 
and we denote below by $\nu_\sss^W(d\beta)$ the corresponding measure, which is introduced later. The main results are the following:
\begin{itemize}
\item
We generalize the key property \eqref{key_identity_beta} to an identity of the type $\int_\sss \nu_\sss^{W}(d\beta)=\int_\aaa \nu_\aaa^{W}(da)$, see \eqref{eq:beta} below, where $\nu_\aaa^{W}(da)$ is a positive measure living on the space $\aaa$ of $\star$-antisymmetric functions and closely related to the law of the initial local time defined in \eqref{nu_a}. The proof works by induction and goes through a family of measures involving both $\beta$ and $a$ variables in complementary $\star$-symmetric subsets. Surprisingly, the initialization of this induction is difficult and uses the Lagrange resolvent method for solving fourth degree polynomial equations.
\item
We relate the distribution $\nu_\sss^W$ to the randomized \sVRJPse, see Section~\ref{sec_VRJP} below: roughly speaking, the random potential $\beta$ describes the law of the asymptotic jump rates of the time-changed \sVRJPse. For any $\star$-symmetric subset $I\subset V$, we identify the law of the randomized \sVRJP conditioned on $\beta_I$ and $A_{I^c}$: this representation is new, even in the standard VRJP case. This enables us to express the mixing law of the \sVRJP $\mu_{i_0}^W$ defined in \eqref{mui0W} in terms of the law of the random potential $\nu_\sss^W$, and in particular it gives a new proof of Theorem~\ref{main} and a purely computational proof of the identity \eqref{key_identity}.
\end{itemize}

\section{The $\beta$-potentials and fundamental properties}
\label{sec_beta_pot}
\subsection{Definition}
Recall the notations and definitions of Section~\ref{s_star-graph}. However, in this Part~\ref{partII}, it is more convenient not to suppose that the $\star$-directed graph $\ggg$ is strongly connected. 
We remind from the beginning of Section~\ref{s-def-svrjp} that $(W_{i,j})_{(i,j)\in E}$ are $\star$-symmetric positive weights, called conductances by abuse of terminology, on the graph $\ggg$ and that $W=(W_{i,j})_{i,j\in V}$ also denotes the matrix of conductances with $W_{i,j}=0$ when $(i,j)\notin E$.

For all $\beta\in \sss$, let $H_\beta$ be the Schr\"odinger operator defined by
$$
H_\beta=\beta-W,
$$
where $\beta$ is the operator of coordinate multiplication by $(\beta_i)_{i\in V}$.
We write $H_\beta>0$ when $H_\beta$ is positive stable, i.e.  all its eigenvalues have positive real parts. When $H_\beta>0$, $H_\beta$ is a non-singular $M$-matrix (see e.g. \cite{berman1994nonnegative} or Appendix~\ref{Append_M_matrices}) and  the inverse matrix 
$$
G_\beta=H_\beta^{-1}
$$
is well-defined and has positive coefficients between any two vertices 
$i$ and $j$ such that there is a directed path in $\ggg$ from $i$ to $j$, and coefficient 0 otherwise,
see \cite{berman1994nonnegative}, Theorem~2.3~$(N_{38})$ or Proposition~\ref{prop-M} in Appendix~\ref{Append_M_matrices}.

Clearly, the matrices $W$, $H_\beta $, for $\beta\in \sss$, are $\left<\cdot,\cdot\right>$-symmetric since $W_{i,j}=W_{j^\star,i^\star}$ and $\beta_{i^\star}=\beta_i$, where $\left<\cdot,\cdot\right>$ is the bilinear form defined in \eqref{product}. 

The random Schr\"odinger representation of the \sVRJP is based on key integral identities on the spaces $\sss$ and $\aaa$. 
The following results give an extension of Theorem~1 in  \cite{STZ17} for the \sVRJPse, and of its generalized form given in Theorem~2.2 of \cite{LW17}.

\begin{definition}\label{Def_beta}
For all $\theta\in (0,\iy)^V$, $\eta\in[0,\iy)^V$, we define $\nu_\sss^{W,\theta,\eta}(d\beta)$ as the measure on $\sss$ defined by
\begin{align}
\label{eq:nu_beta}
\nu_\sss^{W,\theta,\eta}(d\beta)= \left( \prod_{i\in V_0} \theta_i \right)  {\indic_{H_\beta>0}\over \sqrt{2\pi}^{\dim(\sss)}}  \exp\left(-\demi \left<\theta, H_\beta \theta\right>-\demi \left<\eta, G_\beta \eta\right>+\left<\theta,\eta\right>\right){d\beta\over \sqrt{\vert H_\beta \vert}},
\end{align}
where $d\beta=\prod_{i\in \tilde V} d\beta_i$, 
and $\nu_\aaa^{W,\theta,\eta}(d a)$ as the measure on $\aaa$ defined by 
\begin{align}
\label{eq:nu_a}
\nu_\aaa^{W,\theta,\eta}(d a)= {1\over \sqrt{2\pi}^{\dim(\aaa)}} \exp\left(-\demi\left<e^{a^\star}\theta, We^{a^\star}\theta\right>+\demi\left<\theta,W\theta\right>-\left<\eta,e^{a^\star}\theta-\theta\right> \right) da,
\end{align}
where $da=\prod_{i\in V_1} da_i$ and where $e^{a^\star}\theta =(e^{a_{i^\star}}\theta_i)_{i\in V}$. If $V_1=\emptyset$ then $\aaa=\{0\}$ and by convention we set $\nu_\aaa^{W,\theta,\eta}=1$.
When $\theta_i=1$ and $\eta_i=0$ for all $i\in V$ we simply write $\nu_{\sss}^{W}(d\beta)$ and $\nu_{\aaa}^{W}(da)$.
\end{definition}
\begin{theorem}\label{Thm_beta}
For all $\theta\in (0,\iy)^V$, $\eta\in(\R_+)^V$, we have
\begin{align}
\label{eq:beta}
\int_\sss \nu_\sss^{W,\theta,\eta}(d\beta) = \int_\aaa \nu_\aaa^{W,\theta,\eta}(d a).
\end{align}
 We denote by $F^{W,\theta, \eta}=\int_\aaa \nu_{\aaa}^{W,\theta,\eta}(da)=\int_\sss\nu_{\sss}^{W,\theta,\eta}(d\beta)$ the integrals corresponding to \eqref{eq:beta}. We simply write $F^W$ when $\theta=1$ and $\eta=0$. Finally, $F^{W,\theta, \eta}<\infty$ as soon as for any vertex $i\in V$, there exists a directed path in $\ggg$ from $i$ to $i^\star$ or a directed path from $i$ to a vertex $j$ such that $\eta_j>0$.
\end{theorem}
\begin{remark}
Remark that, when $\theta=1$ and $\eta=0$, the measures $\nu_{\aaa}^W$ above and $\nu_{\aaa,i_0}^W$ introduced in \eqref{nu_a} are related by the simple formula: 
$\nu_{\aaa,i_0}^W(da)= e^{a_{i_0^\star}} \nu_{\aaa}^W(da)$. In particular they are equal when $i_0=i_0^\star$, and $F^W=F^W_{i_0}$ in this case, where $F^W_{i_0}$ is defined in \eqref{FWi}. 
\end{remark}
\begin{remark} 
Note that, for the study of the \sVRJP, it is natural to assume that the graph is strongly connected, in which case the condition ensuring that $F^{W,\theta, \eta}<\infty$ is satisfied. 
However, the induction in Theorem~\eqref{GThm_beta} below involves a restriction of the graph which may no longer be strongly connected, even when the graph $\ggg$ is connected, which explains that we do not restrict the definition to that case.
\end{remark} 
 
\subsection{Restriction and conditioning properties}
Theorem~\ref{Thm_beta} stated above is a special case of a more general theorem, that we state below, which involves also a measure in both the $\beta$ and the $a$ variables. This measure appears naturally in the induction step of the proof of Theorem~\ref{Thm_beta}, it is also related to the \sVRJPse, see Theorems~\ref{mixing-Ia} and~\ref{mixing-Ib} below.

 Fix $I$  self-dual  subset of $V$ (i.e. $I^\star=I$), and let $\sss_I$ and $\aaa_I$ be the corresponding $\star$-symmetric and $\star$-antisymmetric  subspaces of  $\R^I$:
$$
\sss_I=\{x\in \R^I,\;\; x_i=x_{i^\star}\}, \;\;\;  \aaa_I=\{x\in \R^I,\;\; x_i=-x_{i^\star}\}.
$$
We also adopt the following notations: given $y\in \R^V$, denote by $y_I=(y_i)_{i\in I}$ the restriction of $y$ to the indices of $I$, and for a $V\times V$ matrix $M$, let 
$$
M_{I,I}, \;\; M_{I,I^c}, \;\; M_{I^c,I}, \;, M_{I^c,I^c},
$$
be the block matrices obtained by the restriction of $M$ to the corresponding subsets.

Given $a_I\in \aaa_I$, $\beta_I\in \sss_I$, $\theta\in (0,\iy)^V$ and $\eta\in \R_+^V$,   define 
 \begin{align}
 \label{theta_A}
  \hat H_\beta&=(H_\beta)_{I,I}=\beta_I -W_{I,I},
  \;\; \hat G_\beta= (\hat H_\beta)^{-1}\\
 \tet^{a}_I&=e^{a^\star_I}\tet_I=(e^{a^\star_i}\tet_i)_{i\in I}\\
 \label{eta_hat}
 \hat\eta_I&=\eta_I+W_{I,I^c}\theta_{I^c}, \,\,\hat\eta^{a}_I=\eta_I+W_{I,I^c}\theta_{I^c}^{a}
\\
\label{W_check}
\check W_{I^c,I^c}&=W_{I^c,I^c}+W_{I^c,I} \hat G_\beta W_{I,I^c},
\\
 \label{eta_check}
 \check \eta_{I^c}&=\eta_{I^c}+W_{I^c,I}\hat G_\beta \eta_{I}
 \end{align}
Note that $\hat \eta_I$ does not depend on $\beta$ but on $\eta_{I}$ and $\tet_{I^c}$, while $\check W_{I^c,I^c}$, $\check\eta_{I^c}$, depends on $\beta_I$ but not on $\beta_{I^c}$. Note also that $\check\eta_{I^c}=0$ if $\eta=0$. The following lemma plays a key role in the proof of Theorems~\ref{Thm_beta} and~\ref{GThm_beta} below.
\begin{lemma}
\label{lem_identities}
For all non-empty subset $I\subsetneq V$, $I^\star=I$,  for $\theta\in (0,\iy)^V$, $\eta\in \R_+^V$, and with the notations above, we have the following equality of measures:
$$\nu_\sss^{W,\theta,\eta}(d\beta)= \nu_{\sss_I}^{W_{I,I},\theta_I,\hat \eta_I}(d\beta_I)  \nu_{\sss_{I^c}}^{\check W_{I^c,I^c},\theta_{I^c},\check \eta_{I^c}}(d\beta_{I^c});
$$
$$
\nu_\aaa^{W,\theta,\eta}(da)= \nu_{\aaa_I}^{W_{I,I},\theta_I,\hat \eta_I^a}(da_I)  \nu_{\aaa_{I^c}}^{W_{I^c,I^c},\theta_{I^c},\hat \eta_{I^c}}(da_{I^c});
$$
$$
\nu_{\sss_I}^{W_{I,I},\theta_I,\hat \eta^a_I}(d\beta_I) \nu_{\aaa_{I^c}}^{W_{I^c,I^c},\theta_{I^c},\hat \eta_{I^c}}(da_{I^c})
=
\nu_{\sss_I}^{W_{I,I},\theta_I,\hat \eta_I}(d\beta_I) \nu_{\aaa_{I^c}}^{\check W_{I^c,I^c},\theta_{I^c},\check \eta_{I^c}}(da_{I^c}).
$$
We denote by $\mathcal{Q}_I^{W,\tet,\eta}(d\beta_{I},da_{I^c})$  the measure on $\sss_I\times\aaa_I$ given by the equivalent expressions above.
\end{lemma}
The theorem below generalizes Theorem~\ref{Thm_beta} above.
\begin{theorem}\label{GThm_beta}

For all $I\subset V$ such that $I^\star=I$,  $\theta\in (0,\iy)^V$ and $\eta\in \R_+^V$, 
 \begin{align}
 \label{integrale-partielle}
  \int_{\sss}  \nu_\sss^{W,\theta,\eta}(d\beta)
=
\int_{\sss_I\times\aaa_{I^c}}
 \mathcal{Q}_I^{W,\tet,\eta}(d\beta_I,da_{I^c})
 = \int_{\aaa}  \nu_\aaa^{W,\theta,\eta}(da)
\end{align}
If $F^{W,\theta,\eta}<\infty$, we denote by $\overline \nu_\sss^{W,\theta,\eta}$,  $\overline \nu_\aaa^{W,\theta,\eta}$ and $\overline{\mathcal{Q}}_I^{W,\theta,\eta}$ the probability measures ${1\over F^{W,\theta,\eta}} \nu_\sss^{W,\theta,\eta}$,  ${1\over F^{W,\theta,\eta}} \nu_\aaa^{W,\theta,\eta}$ and ${1\over F^{W,\theta,\eta}} \mathcal{Q}_I^{W,\tet,\eta}$.
\end{theorem}

Finally, we prove that the laws introduced in Theorems~\ref{Thm_beta} and~\ref{GThm_beta} are stable by conditioning by the values on a subset. This gives a counterpart to the result proved independently in Lemma~5 of \cite{SZ19} and \cite{LW17} Section~4.  
\begin{proposition}\label{conditionning}
Let $I\subsetneq V$ be such that $I^\star=I$,  $\theta\in (0,\iy)^V$ and $\eta\in \R_+^V$. 
Assume that $F^{W,\theta,\eta}<\infty$.

\noindent i) {\it (Conditioning)} We have the equalities of conditioned probabilities:
\begin{itemize}
\item
under $\overline\nu_\sss^{W,\theta,\eta}(d\beta)$, conditioned on $\beta_I$, $\beta_{I^c}$ has law $ \overline \nu_{\sss_{I^c}}^{\check W_{I^c,I^c},\theta_{I^c},\check \eta_{I^c}}$;
\item
under $\overline\nu_\aaa^{W,\theta,\eta}(da)$, conditioned on $a_{I^c}$, $a_{I}$ has law $\overline \nu_{\aaa_{I}}^{W_{I,I},\theta_{I},\hat \eta^a_{I}}$;

\item
under $\overline{\mathcal{Q}}_I^{W,\tet,\eta}(d\beta_I,da_{I^c})$: conditioned on $a_{I^c}$, $\beta_I$ has law $\overline \nu_{\sss_I}^{W_{I,I},\theta_I,\hat \eta^a_I}$ and conditioned on $\beta_I$, $a_{I^c}$ has law $\overline \nu_{\aaa_{I^c}}^{\check W_{I^c,I^c},\theta_{I^c},\check \eta_{I^c}}$.
\end{itemize}

\noindent ii)
{\it (Restriction)} $\beta_I=(\beta_i)_{i\in I}$ has the same law under $\overline\nu_\sss^{W,\theta,\eta}(d\beta)$ and $\overline{\mathcal{Q}}_I^{W,\tet,\eta}(d\beta_I,da_{I^c})$, while $a_{I^c}=(a_i)_{i\in I^c}$ has the same law under $\overline\nu_\aaa^{W,\theta,\eta}(da)$ and $\overline{\mathcal{Q}}_I^{W,\tet,\eta}(d\beta_I,da_{I^c})$.

\end{proposition}
\begin{remark} Implicitly, Theorem \ref{conditionning} implies that if $F^{W,\theta,\eta}<\infty$ for the full graph, then it is also the case for all the integrals that appear on the restrictions to $I$ and $I^c$ with the corresponding parameters $W,\theta,\eta$, for instance in the first statement it implies that $F^{\check W_{I^c,I^c},\theta_{I^c},\check \eta_{I^c}}<\infty$.
\end{remark}
\begin{remark}
Let us point out that Theorem \ref{conditionning} yields a generalization in the case of the \sVRJP of the restriction and conditioning identities stated in Lemma~5 of \cite{SZ19} and \cite{LW17} Section~4, corresponding to the case where $\star$ is the identity: (i) coincides with  Lemma~5 (ii) in \cite{SZ19}, whereas (ii) coincides with  Lemma~5 (i) in \cite{SZ19}: indeed, in this case $\overline{\mathcal{Q}}_I^{W,\tet,\eta}(d\beta_I,da_{I^c})$ is just $\overline\nu_{\sss_I}^{W_{I,I},\theta_I,\hat \eta_I}(d\beta_I)$ since $\aaa_{I^c}=\{0\}$
 and $\nu_{\aaa_{I^c}}^{\check W_{I^c,I^c},\theta_{I^c},\check \eta_{I^c}}=1$. 
\end{remark}

\section{Relation with the \sVRJP}\label{sec_VRJP}
\subsection{Results in the simpler case where $i_0=i_0^\star$}
In the case where the starting point of the \sVRJP is self-dual, i.e. $i_0=i_0^\star$, the relation between the $\beta$-potential and the \sVRJPse, and its mixing measure $\mu_{i_0}^W$, is easier to state.
In the general case where $i_0\neq i_0^\star$, the formulas are slightly more involved and one needs to tilt the measures to take into account the fact the \sVRJP starts from $i_0$ or $i_0^\star$. Hence, for clarity, we start by stating the results in the case of a self-dual starting point and state the general case in the next section.

For $\theta=1$ and $\eta=0$, the measure $\mathcal{Q}_{I}^{W}$ defined in Lemma~\ref{lem_identities} appears as the joint law of the asymptotic jump rates of the randomized \sVRJP on $I$ and the initial random local time $A$ on $I^c$.
\begin{theorem}\label{mixing-Ia}
Assume that $\ggg$ is finite and strongly connected. Let $I\subsetneq V$, $I^\star=I$. Assume that $i_0=i_0^\star$ and $i_0\in I^c$.

i) Under the law of the randomized \sVRJP starting at $i_0$, i.e. under $\overline\P_{i_0}^{W}$, denote by $(U_i)_{i\in V}$ the limiting local time defined in Theorem~\ref{main}~i) and by
$$
\mathcal{B}_i=\sum_{j, i\to j} W_{i,j}e^{U_{j^\star}-U_{i^\star}}, \;\;\; i\in I.
$$
Then, $(\mathcal{B}_I,A_{I^c})$ has the law  ${1\over F^W}\mathcal{Q}_{I}^{W}$.

ii) Under $\overline\P_{i_0}^{W}$, conditionally on $(\mathcal{B}_I,A_{I^c})$, the \sVRJP $X(t)$ has the law of the jump process with jump rates at time $t$, from $i$ to $j$ given by
$$
W_{i,j}e^{T_{i}(t)+T_{i^\star}(t)} e^{V_{j^\star}(t)-V_{i^\star}(t)},
$$
where $V(t)=(V_i(t))_{i\in V}$ is defined by
$$
\begin{cases}
V_i(t)=T_i(t)+A_i, &\hbox {if $i\in I^c$},
\\
(H_{\mathcal{B}}(e^{V^\star(t)}))_{I}=0. \;\;\; 
\end{cases}
$$
\end{theorem}
\noindent
{\bf N.B.:} Note that the equation $(H_{\mathcal{B}}(e^{V^\star(t)}))_{I}=0$ only involves $\mathcal{B}_I$ and not $\mathcal{B}$ on $I^c$.
\begin{remark}
Let us clarify the meaning of the statement ii): 
\begin{itemize}
\item
When the process $X(t)$ is in $I$, then $V(t)$ is constant, as $\beta$ is fixed and the boundary value of $V(t)$ on $I^c$ is constant. Hence, while $X(t)$ belongs to $I$, the process jumps at rate $W_{i,j}e^{T_{i}(t)+T_{i^\star}(t)} e^{v_{j^\star}-v_{i^\star}}$ for a constant function $v$ and, after the change of time given in Theorem~\ref{main}~ii), 
the process jumps with constant rate $W_{i,j}e^{v_{j^\star}-v_{i^\star}}$. In other words, the process behaves as a time-changed Markov jump process during its excursions on the subset $I$, but the transition probabilities may change between two different excursions.
\item
On the contrary, when $i\in I^c$, and $j\in I^c$, then
$$
W_{i,j}e^{T_{i}(t)+T_{i^\star}(t)} e^{V_{j^\star}(t)-V_{i^\star}(t)}= W_{i,j}e^{T_{i}(t)+A_i+T_{j^\star}(t)+A_{j^\star}},
$$
hence the jump rate is the jump rate of the \sVRJP with initial local time $A$.
\end{itemize}
In summary, Theorem~\ref{mixing-Ia}~ii) gives a representation of the randomized \sVRJP by a self-interacting process on $I^c$, and a mixture of Markov jump processes during the excursions on $I$. 
\end{remark}
\begin{remark}
Note that Theorem \ref{mixing-Ia} is also new in the case of the standard VRJP: then $A=0$, and the statement gives the law of the VRJP conditioned on the asymptotic jump rate $\beta$ restricted to a subset $I\subset V$.
\end{remark}
\begin{remark}\label{rk:identity_0_b}
The law of the asymptotic jump rates $\mathcal{B}_i=\sum_{j, i\to j} W_{i,j}e^{U_{j^\star}-U_{i^\star}}$ on the full set of vertices $V$ does not belong to the family of laws $\overline \nu_\sss^{W,\theta,\eta}$, since it is biased by the starting point of the process. In fact, we always have $\det(H_{\mathcal{B}})=0$ while $H_\beta>0$ a.s. under any of the laws $\overline \nu_\sss^{W,\theta,\eta}(d\beta)$. Note that the same happens for the standard VRJP. However, the expression of the law of $\mathcal{B}$ on $V$ appears naturally in the course of the proof of Corollary~\ref{thm:identity_0_b}, see Remark~\ref{rk-nu_I_i0} below.
\end{remark}

Theorem \ref{mixing-Ia} enables us to retrieve the mixing measure of the VRJP, more precisely it gives a different proof of Theorem~\ref{main}.  Indeed, consider the case $I^c=\{i_0\}$, and recall that we have fixed $i_0=i_0^\star$ in this section. Then $A_{i_0}=0$ and 
$$
V_{j^\star}(t)-V_{i^\star}(t)=u_{j^\star}-u_{i^\star}, \;\; \forall t\ge 0,
$$
where $u$ is the solution of $u_{i_0}=0$ and $H_\bbb(e^u)_{I}=0$, i.e.
$$
e^{u_j}={G_\bbb(i_0,j)\over G_\bbb(i_0,i_0)}, \;\;\; \forall j\in V.
$$
Remark that this only involves the values of $\bbb$ on $I$, and in particular not $\bbb_{i_0}$.
It leads to the following corollary, which is a generalization to the case of the \sVRJP of Theorem~3~i) of \cite{STZ17}, and which also gives a different proof of Theorem~\ref{main}. 
\begin{corollary}
\label{thm:identity_0_a}
Assume that $i_0=i_0^\star$ and that the graph is strongly connected. Let $\theta=1$ and $\eta=0$. Then $\int_\aaa \nu_{\aaa}^{W}(da)=F^W=F^W_{i_0}$ and, under the distribution ${1\over F^W} \nu_{\sss}^{W}(d\beta)$,
$$
\left({G_\beta(i_0,j)\over G_{\beta}(i_0,i_0)}\right)_{j\in V}
$$
is distributed as $(e^{{u_j}-u_{i_0}})_{j\in V}$ under the mixing law ${1\over F^W} \mu_{i_0}^{W}(du)$. In particular, under $\overline \P_{i_0}^W$, the randomized \sVRJP in exchangeable time scale $(Z_t)_{t\ge 0}$ (defined in Theorem~\ref{main}~ii) ) is a mixture of Markov jump processes with jump rates from $i$ to $j$
$$
W_{i,j} {G_\beta(i_0,j^\star)\over G_{\beta}(i_0,i^\star)},
$$
where $\beta$ is distributed according to ${1\over F^W} \nu_{\sss}^{W}(d\beta)$.
\end{corollary}


\subsection{Results in the the general case $i_0\neq i_0^\star$}

As explained above, we need to introduce a tilted version of the measures 
$\qqq_I^{W}(d\beta)$ 
in order to take into account the fact that the \sVRJP starts from the vertex $i_0$ rather than from $i_0^\star$. 
\begin{definition-proposition}\label{def_nu_i0}
i) For $i_0\in V$, with the notations of Section~\ref{sec_beta_pot},
we define
$$
\nu_{\aaa,i_0}^{W,\theta,\eta}(da)= \theta_{i_0}^{a}\nu_{\aaa}^{W,\theta,\eta}(da)= \theta_{i_0}e^{a_{i_0^\star}}\nu_{\aaa}^{W,\theta,\eta}(da).
$$
Remark that when $\theta_i=1$ for all $i\in V$ and $\eta=0$, it coincides with Definition \eqref{nu_a}.


ii) Let $I\subsetneq V$, $I=I^\star$, and assume that $i_0\in I^c$. 
Define 
$$
\qqq_{I,i_0}^{W,\theta,\eta}(d\beta_I,da_{I^c})=
  \theta_{i_0}^{a} \qqq_{I}^{W,\theta,\eta}(d\beta_I,da_{I^c}).
$$

Then
 \begin{align}
 \label{integrale-partielle-i0}
\int_{\sss_I\times\aaa_{I^c}}
 \mathcal{Q}_{I,i_0}^{W,\tet,\eta}(d\beta_I,da_{I^c})
 = \int_{\aaa}  \nu_{\aaa,i_0}^{W,\theta,\eta}(da).
\end{align}
Denote by $F^{W,\theta,\eta}_{i_0}$ the corresponding value. When $\theta=1$, $\eta=0$, we simply write $F^W_{i_0}$ and it corresponds to the integral defined in \eqref{FWi}.
\end{definition-proposition}
\begin{remark} We could also define a tilted version of the measure on $\sss$ when $\eta\neq 0$, with the same integral as \eqref{integrale-partielle-i0}, by $\nu_{\sss,i_0}^{W,\theta,\eta}(d\beta)=  (G_\beta \eta)_{i_0}\nu_{\sss}^{W,\theta,\eta}(d\beta)$. However, we do not introduce it formally since we do not really need it here.
\end{remark}
\begin{theorem}\label{mixing-Ib}
Let $\ggg$ be finite and strongly connected. Let $I\subsetneq V$, $I^\star=I$ and $i_0\in I^c$. Then the statement of Theorem~\ref{mixing-Ia}~i) and ii) holds, with a single modification : ${1\over F^W_{i_0}}\mathcal{Q}_{I,i_0}^{W}$ replaces ${1\over F^W}\mathcal{Q}_{I}^{W}$  in i) as the law of $(\mathcal{B}_I,A_{I^c})$.
\end{theorem}

We can deduce from Theorem~\ref{mixing-Ib}  a second proof of the representation of the \sVRJPse, i.e. of Theorem~\ref{main}. 
It is the content of Corollary~\ref{thm:identity_0_b} below. 
Let us fix some notations: in this subsection we take a starting point $i_0\in V$ and set
$$
I=V\setminus\{i_0,i_0^\star\}.
$$
We use notation~\eqref{theta_A} above for this subset $I$, so that for $\beta_I=(\beta_i)_{i\in I}\in \sss_I$, we have
$$
\hat H_\beta =\beta_I -W_{I,I}, \;\;\;\hat G_\beta =(\hat H_\beta)^{-1}.
$$

Theorem~\ref{mixing-Ib}~i) gives us a relation between the law ${1\over F_{i_0}^W}\mu_{i_0}^W$ of Theorem~\ref{main} and the distribution ${1\over F^W_{i_0}}\mathcal{Q}_{I,i_0}^{W}$. The first step is to prove that in the case where $I=V\setminus\{i_0,i_0^\star\}$, there is a bijection between the random variables $(U_i)_{i\in V}$ and the variables $(\bbb_i)_{i\in I}$ which appear in Theorem~\ref{mixing-Ib}, on good domains. With this goal, we introduce some notation. Denote by 
\begin{equation}
\mathcal{D}_{i_0}=\left\{\be_I=(\be_i)_{i\in I}\in \sss_I:\,\hat H_\be>0
\right\}.
\end{equation}
Note that the probability measure $\mathcal{Q}_{I,i_0}^{W}(d\beta_I,da_{I^c})$ is supported on $\mathcal{D}_{i_0}\times \aaa_{I^c}$. Finally, let $\Xi_{i_0}:\mathcal{U}_0^W\mapsto \sss_I$ 
be the map given by
\begin{eqnarray}\label{Xi}
(\Xi_{i_0}(u))_i= \sum_{j, i\to j} W_{ij}e^{u_{j^\star}-u_{i^\star}}, \;\;\; \forall i\in I.
\end{eqnarray}
Note that 
$\Xi_{i_0}(u)\in \sss_I$
since $u\in \uuu_0^W$. 
We have the following lemma.
\begin{lemma}
\label{diff}
For all $i_0\in V$, $\Xi_{i_0}$ is a $C^1$-diffeomorphism from $\mathcal{U}_0^W$ onto $\mathcal{D}_{i_0}$.
\end{lemma}
Applying Theorem~\ref{mixing-Ib} to the case  $I=V\setminus\{i_0,i_0^\star\}$ and the previous lemma, we see that under the law $\overline\P_{i_0}^W$ of the randomized \sVRJP starting at $i_0$, the joint law of $\left((U_i)_{i\in V}, (A_{i_0},A_{i_0^\star})\right)$ is the same as the law of $\left(\Xi_{i_0}^{-1}(\bbb_I), A_{I^c}\right)$, where $(\bbb_I,A_{I^c})$ is distributed according to ${1\over F^W_{i_0}}\mathcal{Q}_{I,i_0}^{W}$.
In order to retrieve Theorem~\ref{main}, it is thus enough to prove that $\left(\Xi_{i_0}\right)^{-1}(\bbb_I)$ has law ${1\over F^W_{i_0}}\mu_{i_0}^{W}$, which is the purpose of the corollary below. 
\begin{corollary}\label{thm:identity_0_b}
Assume that the graph $\ggg$ is strongly connected and let $I=V\setminus\{i_0,i_0^\star\}$. If $(\bbb_I, A_{I^c})$ is distributed according to ${1\over F^W_{i_0}}\mathcal{Q}_{I,i_0}^{W}$, then $\Xi_{i_0}^{-1}(\bbb_I)$ is distributed according to ${1\over F^W_{i_0}}\mu_{i_0}^{W}$. 
\end{corollary}
\begin{remark}
Together with Theorem~\ref{mixing-Ib}, Corollary \ref{thm:identity_0_b} gives a different proof of Theorem~\ref{main}, and in particular of the identity~\eqref{key_identity}. Remark that it is also the general form of Corollary~\ref{thm:identity_0_a} since in the case $i_0=i_0^\star$, we easily see that if we define $U$ by $U= \left(\Xi_{i_0}\right)^{-1}(\bbb_I)$, then $e^{U_j-U_{i_0}}={G_\bbb(i_0,j)\over G_\bbb(i_0,i_0)}$, by definition of $G_\beta$.
\end{remark}

\part{Proofs of the Results}
\section{Preliminary results and proof of the statements in Sections~\ref{s_limiting} and \ref{s_randomization}}
\subsection{Proofs of the results of Section~\ref{s_limiting} and preliminary results concerning the limiting manifold}\label{subsec_limiting}
\begin{proof}[Proof of Lemma~\ref{transversal}]
We denote by
$$
DJ_u(v)=\left({\partial\over \partial v_i} J_u(v)\right)_{i\in V}, \;\;\; D^2J_u(v)=\left({\partial^2\over \partial v_i\partial v_j} J_u(v)\right)_{i, j\in V},
$$
the gradient and Hessian of the function $J_u$.
A direct computation gives 
$$
{\partial\over \partial v_i} J_u(v)= -\sum_{j, i\to j} W_{i,j}^u e^{v_j-v_i}+ \sum_{j, j\to i} W^u_{j,i} e^{v_i-v_j}
$$
$$
{\partial^2\over \partial v_i\partial v_j} J_u(v)= \left\{\begin{array}{ll} -W^u_{i,j}e^{v_j-v_i}-W_{j,i}^u e^{v_i-v_j},& \hbox{ $i\neq j$},
\\
\sum_{l} (W^u_{i,l}e^{v_l-v_i}+W_{l,i}^u e^{v_i-v_l}),& \hbox{ $i= j$}.
\end{array}\right.
$$
From this last formula, we easily deduce that $J_u$ is strictly convex, since for all $b\in \hhh_0$,
$$
\left( b, D^2J_u(v) b\right)=
\sum_{i,j} (W^u_{i,j}e^{v_j-v_i} + W^u_{j,i}e^{v_i-v_j})(b_i-b_j)^2.
$$
%
If $a\in \aaa$, denoting as above $h=u+a$ and using that $W^u_{i,j}e^{a_{j^\star}-a_{i^\star}}= W^{h}_{i,j}$, we deduce
\begin{eqnarray}\label{DJa}
(DJ_u)(a^\star)_i= - \dive(W^{h})(i),\;\;\; \forall i\in V.
\end{eqnarray}

Now consider  $J_u$ restricted to $\aaa$ : by the asymptotic behavior of $J_u$ and its convexity, we know that
$J_u$ admits a unique minimizer on $\aaa$. It implies that there is a unique $a\in \aaa$ such that $(DJ_u(a^\star),b)=0$ for all $b\in \aaa$, which is equivalent to $\dive(W^{u-a})=0$, since $\dive(W^{u-a})\in \aaa$. Hence there is a unique $a\in \aaa$ such that $h=u-a\in \uuu_0^W$.
 Note finally that by \eqref{DJa} $a^\star$ is also the minimizer of $J_u$ on $\hhh_0$.
\end{proof}

We will also need a result on the tangent space to the limiting manifold $\lll_0^W$. 
Given $h\in\R^V$, $K^h$ defined in \eqref{Ku} can  be understood as the operator $K^h : \R^V \mapsto \R^V$ defined by
\begin{eqnarray}\label{Gh}
K^h(g)(i)= \sum_{j, \;i\to j} W_{i,j}^h(g(j)-g(i)).
\end{eqnarray}
In other words, $K^h$ is the infinitesimal generator of the Markov Jump Process with jump rates 
$W_{i,j}^h$. We write $^tK^h$ for the transpose of $K^h$. When $h\in \lll_0^W$,  $\im (^tK^h)\subset \hhh_0$ : indeed, using  $\dive(W^h)=0$, 
$$
\sum_{i\in V}  {^t}K^h(g)(i)= \sum_{i\in V} g(i)\dive(W^h)(i)=0.
$$
Hence, when $h\in \lll_0^W$, $^tK^h$ is bijective on $\hhh_0$ : in the sequel we always understand the inverse of $^tK^h$
as the operator $(^tK^h)^{-1}:\hhh_0\mapsto \hhh_0$, with a slight abuse of notation, 
obtained as the inverse of $^tK^h_{| \hhh_0}$.

\begin{lemma}\label{tangent}
The tangent space of $\lll_0^W$ at point $h$, denoted by $T_h^W$, is equal to
$$
T_h^W=\{x\in \hhh_0, \;\; {}^tK^h x\in \sss_0 \}= ({}^tK^h)^{-1}(\sss_0),
$$
where, as explained above, $(^tK^h)^{-1}$ is understood as the inverse of the restriction of $^tK^h$ to $\hhh_0$.
\end{lemma}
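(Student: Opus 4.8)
\noindent\emph{Proof plan.} The plan is to realise $\lll_0^W$ as a regular level set and compute the relevant differential explicitly. Consider the map $\Phi\colon \hhh_0\to\R^V$, $\Phi(u)=\dive(W^u)$, so that $\lll_0^W=\Phi^{-1}(0)$. Two preliminary remarks, both immediate from the $\star$-invariance $W^u_{i,j}=W^u_{j^*,i^*}$ (which holds for every $u$) together with the edge bijection $j\mapsto j^*$: first $\sum_{i\in V}\dive(W^u)(i)=0$, so $\Phi$ takes values in $\hhh_0$; second $\dive(W^u)(i^*)=-\dive(W^u)(i)$, so in fact $\Phi$ takes values in $\aaa$. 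Hence $\Phi\colon\hhh_0\to\aaa$ and $\lll_0^W=\Phi^{-1}(0)$.

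The core computation is the differential at a point $h\in\lll_0^W$. Differentiating $\dive(W^{h+tx})(i)$ at $t=0$ for $x\in\hhh_0$ and regrouping, the terms proportional to the total exit rate $d_i:=\sum_{j,\,i\to j}W^h_{i,j}$ cancel --- here one uses $\dive(W^h)=0$ (so the exit rate at $i$ equals the entrance rate, and $d_{i^*}=d_i$), the $\star$-symmetry of $W^h$, and the substitution $j\mapsto j^*$ --- leaving
$$
D\Phi(h)\,x=({}^tK^h x)^*-{}^tK^h x=-2\,P_\aaa({}^tK^h x).
$$
Carrying out this bookkeeping so that the derivative of the divergence collapses to $-2\,P_\aaa\circ{}^tK^h$ is the one genuinely computational step, and the place where an error is most likely; everything around it is formal.

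Next I would record that ${}^tK^h$ restricts to a bijection of $\hhh_0$. Since $\ggg$ is strongly connected, $\ker{}^tK^h$ is one-dimensional; and because $h\in\lll_0^W$ forces $\dive(W^h)=0$, the uniform vector $\mathbf 1$ is $W^h$-invariant, i.e. ${}^tK^h\mathbf 1=0$, so $\ker{}^tK^h=\R\mathbf 1$. As $\im{}^tK^h\subseteq\hhh_0$ and $\mathbf 1\notin\hhh_0$, the restriction ${}^tK^h\colon\hhh_0\to\hhh_0$ is invertible --- exactly as $K^h|_{\hhh_0}$ is, as already used in the text. Consequently $D\Phi(h)|_{\hhh_0}=-2\,P_\aaa\circ{}^tK^h|_{\hhh_0}$ is the composition of a bijection of $\hhh_0$ with the orthogonal projection $P_\aaa\colon\hhh_0\to\aaa$ (surjective since $\aaa\subseteq\hhh_0$), hence surjective onto $\aaa$. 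Thus $0$ is a regular value of $\Phi$, $\lll_0^W$ is a smooth manifold, and $T_h(\lll_0^W)=\ker D\Phi(h)$.

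Finally the identification: by the displayed formula, $x\in T_h(\lll_0^W)$ iff $P_\aaa({}^tK^h x)=0$, i.e. ${}^tK^h x$ is orthogonal to $\aaa$; since ${}^tK^h x\in\hhh_0$, this says ${}^tK^h x$ lies in the orthogonal complement of $\aaa$ within $\hhh_0$, which is $\sss_0$. Therefore $T_h(\lll_0^W)=\{x\in\hhh_0\colon {}^tK^h x\in\sss_0\}=({}^tK^h)^{-1}(\sss_0)$ with ${}^tK^h$ meaning its invertible restriction to $\hhh_0$, which is the claim. As a check, $\dim T_h(\lll_0^W)=\dim\sss_0=|V_0|+|V_1|-1=\dim\hhh_0-\dim\aaa$, consistent with the parametrization of $\lll_0^W$ by $\sss_0$ in Section~\ref{s_limiting}. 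The main obstacle is the differential computation in the second step; an alternative that avoids it would be to differentiate the smooth parametrization $s\mapsto p_{\lll_0^W}(s)$ from Lemma~\ref{transversal_bis}, but one would then still have to massage its image into the form $({}^tK^h)^{-1}(\sss_0)$, which does not obviously shorten the argument.
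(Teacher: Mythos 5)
Your argument is correct and takes essentially the same route as the paper's proof: both differentiate the defining equation $\dive(W^h)=0$ at a point $h\in\lll_0^W$ and use $\star$-invariance of $W^h$, the reindexing $j\mapsto j^*$, and the null-divergence hypothesis to recognize the differential as $x\mapsto ({}^tK^h x)^*-{}^tK^h x$. The paper carries out exactly this computation coordinatewise, ending with the condition ${}^tK^h(dh)(i)={}^tK^h(dh)(i^*)$, which is your statement that ${}^tK^h x\in\sss$ (hence in $\sss_0$ once one notes $\im{}^tK^h\subseteq\hhh_0$). Your presentation adds one thing the paper omits: you verify that $0$ is a regular value of $\Phi=\dive(W^{\cdot})\colon\hhh_0\to\aaa$ (via ${}^tK^h\mathbf 1=0$, irreducibility, and surjectivity of $P_\aaa\circ{}^tK^h|_{\hhh_0}$), which actually proves the smoothness of $\lll_0^W$ asserted without proof in Section~\ref{s_limiting}. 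One small wording nit: the $d_i$-terms do not quite ``cancel''; they combine with the off-diagonal sums to assemble the diagonal of $-{}^tK^h$. The displayed formula $D\Phi(h)x=({}^tK^h x)^*-{}^tK^h x=-2P_\aaa({}^tK^h x)$ is nonetheless correct.
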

\begin{proof}
By definition, $h\in \uuu_0^W$ if and only if $h\in \hhh_0$ and $\dive(W^h)=0$. Differentiating the last identity, we see that an element $dh$ is in $T_h(\lll_0^W)$ if and only if $dh\in \hhh_0$ and for all $i\in V$, using $\dive(W^h)=0$,
\beq
&&\sum_{j, i\to j} W_{i,j}^h(dh_i+dh_{j^\star})- \sum_{j, j\to i} W_{j,i}^h(dh_{i^\star}+dh_j)=0
\\
&\Leftrightarrow&
\sum_{j, i\to j} W_{j^\star,i^\star}^h(dh_{j^\star}-dh_{i^\star})- \sum_{j, j\to i} W_{j,i}^h(dh_j-dh_i)=0
\\
&\Leftrightarrow&
{}^tK^h(dh)(i)={}^tK^h(dh)(i^\star).
\eeq
\end{proof}
Finally, the following simple property will be used several times throughout the paper.
\begin{proposition}\label{Fa}
For $i\in V$, define $\tilde F^W_i$ by
\begin{eqnarray}\label{tilde_F}\;\;\;\;
\tilde F^W_i=e^{-\demi\sum_{(i,j)\in E} W_{i,j}} F^W_i=
{1\over \sqrt{2\pi}^{\vert V_1\vert}}\int_\aaa e^{a_{i^\star}}e^{-\demi\sum_{(i,j)\in E} W_{i,j}e^{a_i+a_{j^\star}}}(\prod_{i\in  V_1} da_i)
\end{eqnarray}
If $\tilde a\in \aaa$ then $\tilde F^{W^{\tilde a}}_{i_0}= e^{\tilde a_{i_0}} \tilde F^W_{i_0}$.
\end{proposition}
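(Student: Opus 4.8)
The plan is to prove the identity by a single translation change of variables on the affine space $\aaa$. First I would record the ingredients: since $\tilde a\in\aaa$, formula \eqref{Wu} (read via \eqref{jump-rates}) gives $W^{\tilde a}_{i,j}=W_{i,j}e^{\tilde a_i+\tilde a_{j^*}}$, and hence for \emph{any} $a\in\aaa$ and any $(i,j)\in E$,
$$
W^{\tilde a}_{i,j}\,e^{a_i+a_{j^*}}=W_{i,j}\,e^{(a_i+\tilde a_i)+(a_{j^*}+\tilde a_{j^*})}.
$$
Plugging this into the defining integral \eqref{tilde_F} for $\tilde F^{W^{\tilde a}}_{i_0}$ turns the exponential weight factor $e^{-\demi\sum_{(i,j)\in E}W^{\tilde a}_{i,j}e^{a_i+a_{j^*}}}$ into $e^{-\demi\sum_{(i,j)\in E}W_{i,j}e^{b_i+b_{j^*}}}$ after the substitution $b=a+\tilde a$ (the index set $E$ of the sum is unchanged).

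The key step is then to observe that, because $\tilde a$ is a \emph{fixed} element of $\aaa$, the map $a\mapsto b=a+\tilde a$ is a translation of $\aaa\simeq\R^{V_1}$; it is a bijection $\aaa\to\aaa$ whose Jacobian in the coordinates $(a_i)_{i\in V_1}$ equals $1$, so $\prod_{i\in V_1}da_i=\prod_{i\in V_1}db_i$. It remains to track the single asymmetric prefactor $e^{a_{i_0^*}}$: writing $a=b-\tilde a$ and using the antisymmetry $\tilde a_{i_0^*}=-\tilde a_{i_0}$ gives $a_{i_0^*}=b_{i_0^*}-\tilde a_{i_0^*}=b_{i_0^*}+\tilde a_{i_0}$, so $e^{a_{i_0^*}}=e^{\tilde a_{i_0}}\,e^{b_{i_0^*}}$ with the constant $e^{\tilde a_{i_0}}$ coming out of the integral. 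Collecting everything, $\tilde F^{W^{\tilde a}}_{i_0}$ equals $e^{\tilde a_{i_0}}$ times $\tfrac{1}{\sqrt{2\pi}^{|V_1|}}\int_\aaa e^{b_{i_0^*}}e^{-\demi\sum_{(i,j)\in E}W_{i,j}e^{b_i+b_{j^*}}}\prod_{i\in V_1}db_i=\tilde F^W_{i_0}$, which is the claim. (When $i_0\in V_0$ the factor $e^{\tilde a_{i_0}}$ is trivially $1$, consistent with the fact that $a_{i_0}=\tilde a_{i_0}=0$ for all elements of $\aaa$.)

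There is no genuine obstacle here; the proof is a short computation. The only points that require a moment's care are that the shift is by an element of $\aaa$ (so that it preserves $\aaa$, its Lebesgue measure $\prod_{i\in V_1}da_i$, and the $\star$-symmetric form of the weights appearing in the sum), and the bookkeeping of the lone $e^{a_{i_0^*}}$ term, which is precisely what produces the multiplicative factor $e^{\tilde a_{i_0}}$.
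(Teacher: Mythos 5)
Your proof is correct and follows exactly the same route as the paper's: substitute $W^{\tilde a}_{i,j}=W_{i,j}e^{\tilde a_i+\tilde a_{j^*}}$ and perform the translation change of variables $a\mapsto a+\tilde a$ on $\aaa$, with the factor $e^{\tilde a_{i_0}}$ falling out of the $e^{a_{i_0^*}}$ prefactor. You simply spell out the bookkeeping (Jacobian, antisymmetry of $\tilde a$) that the paper leaves implicit.
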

\begin{proof}
We have
$$
\tilde F^{W^{\tilde a}}_{i_0}={1\over \sqrt{2\pi}^{\vert V_1\vert}}\int_\aaa e^{a_{i_0^\star}}e^{-\sum_{(i,j)\in \tilde E} W_{i,j}e^{\tilde a_i+\tilde a_{j^\star}} e^{a_i+a_{j^\star}}}(\prod_{i\in  V_1} da_i)
$$
The change of variables $a'=a+\tilde a$ yields the result.
\end{proof}

\subsection{Proof 
of the results of Section~\ref{s_randomization}}\label{ss_Proof_random}
\subsubsection{Proof of Lemma~\ref{finitness}}
In fact, $F_{i_0}^W<\infty$ is true under the weaker assumption that, for any $i\in V$ there is a directed path in $\ggg$ from $i$ to $i^\star$, which is obviously satisfied when the graph is strongly connected.
Set $w:=\inf_{(i,j)\in E} W_{i,j} >0$. For $i\in V$, let $\aaa_i=\{a\in \aaa, \; a_i\ge \max_{j\in V} \vert a_j\vert\}$, we have 
$$
\int_\aaa \nu_{i_0, \aaa}^{W}(da) \le \sum_{i\in V} \int_{\aaa_i} \nu_{i_0,\aaa}^{W}(da).
$$
Fix $i\in V$. Since there exists a directed path from $i$ to $i^\star$, we denote by $\sigma$ a shortest one. 
Since $a_{i^\star}=-a_i$, for $a\in \aaa_i$ there exists $k$ such that $a_{\sigma_k}-a_{\sigma_{k+1}}\ge {2 a_i\over \vert \sigma \vert}$, where $\vert \sigma \vert$ is the length of the path $\sigma$. Besides, we have
$$
\nu_{i_0,\aaa}^{W}(da)\le C e^{-a_{i_0}} \exp\left(-\demi \sum_{i\to j} W_{i,j} e^{a_i-a_j}\right) da\le C e^{-a_{i_0}} \exp\left(-\demi w e^{2a_i\over \vert \sigma\vert}\right) da,
$$
where $C$ is a constant depending only on $W$. Since $\vert a_j\vert\le a_i$ on $\aaa_i$, we have
$$
\int_{\aaa_i} \nu_{\aaa}^{W,1,\eta}(da)\le C \int_0^\infty \vert 2a_i \vert^{\vert V_1\vert} e^{a_i} \exp\left(-\demi w e^{2 a_i\over \vert \sigma\vert}\right) da_i,
$$
since for all $j\notin \{i,i^\star\}$, we integrate $a_j=-a_{j^\star}$ on the interval $[-a_i,a_i]$ if $j\neq j^\star$ or $a_j=0$ if $j=j^\star$. The last expression is finite. This concludes the proof.

\subsubsection{Proof of Lemma~\ref{flow}}
Under $\overline \P_{i_0}^W$, conditionally on $A$, the process $X$ is a \sVRJP with conductances $W^A_{i,j}= W_{i,j} e^{A_i+A_{j^\star}}$. Hence, conditionally on $A$, the probability that the process $X$ at time $t$ has performed $n$ jumps in infinitesimal time intervals $[t_i, t_i+dt_i)$, $0<t_1<\cdots <t_n<t$, following the trajectory
$\sigma_{0}=i_0, \sigma_1, \ldots, \sigma_n=j_0$ is equal to
\begin{eqnarray} \label{proba_traject}
\exp\left(-\int_0^t \sum_{j: X_s\to j} W_{X_s,j}^A e^{T_{X_s}(s)+T_{j^\star}(s)}ds\right)
\left(\prod_{l=1}^{n} W_{\sigma_{l-1}, \sigma_l}^A e^{T_{\sigma_{l-1}}(t_l)+T_{\sigma_l^\star}(t_l)} dt_l\right)
\end{eqnarray}
Indeed, the exponential term accounts for the holding probability since, at time $s$, the process $X$ leaves the position $X_s$ at a rate $\sum_{j: X_s\to j} W_{X_s,j}^A e^{T_{X_s}(s)+T_{j^\star}(s)}$, and the product terms accounts for the probabilities to jump from $\sigma_{l-1}$ to  $\sigma_l$ in time intervals $[t_{l}, t_l+dt_l]$.
Remark now that 
\begin{align*}
&
d\left(\sum_{(i,j)\in E} W_{i,j}^A (e^{T_{i}(t)+T_{j^\star}(t)}-1)\right)
\\
=&\sum_{j: X_t\to j} W_{X_t,j}^A e^{T_{X_t}(t)+T_{j^\star}(t)}dt+\sum_{j, j^\star\to X_t^\star} W_{j^\star,X_t^\star}^A e^{T_{j^\star}(t)+T_{X_t}(t)}dt
\\
=&
\; 2\sum_{j: X_t\to j} W_{X_t,j}^A e^{T_{X_t}(t)+T_{j^\star}(t)}dt.
\end{align*}
This implies that,
$$
\int_0^t \sum_{j, X_s\to j} W_{X_s,j}^A e^{T_{X_s}(s)+T_{j^\star}(s)}ds=
\demi \sum_{(i,j)\in E} W_{i,j}^A (e^{T_{i}(t)+T_{j^\star}(t)}-1).
$$
Since $A\in \aaa$, we have $W^A_{i,j}=W_{i,j}e^{A_i+A_{j^\star}}=W_{i,j}e^{A_{j^\star}-A_{i^\star}} $, and it leads to the following expression for the probability \eqref{proba_traject}
\begin{eqnarray}\label{density-A}
\;\;\;\;\;\;
\left(\prod_{l=1}^{n} W_{\sigma_{l-1}, \sigma_l} e^{{T_{\sigma_{l-1}}(t_l)+T_{\sigma_l^\star}(t_l)}} dt_l\right) e^{A_{j_0^\star}-A_{i_0^\star}}
e^{-\demi\sum_{(i,j)\in E} W_{i,j}e^{A_{j^\star}-A_{i^\star}} (e^{T_{i}(t)+T_{j^\star}(t)}-1)}.
\end{eqnarray}

Hence, if $\phi$ is a test function, 
\begin{eqnarray*}
&&
\overline\E^W_{i_0}\left( \phi(A)\;|\; \sigma(X_u, \; u\le t)\right)
\\
&=&
{\int_{\aaa} \phi(a) e^{a_{X_t^\star}-a_{i_0^\star}} \exp\left(-\demi\sum_{(i,j)\in E} W_{i,j}e^{a_{j^\star}-a_{i^\star}} (e^{T_{i}(t)+T_{j^\star}(t))}-1)\right) \nu_{\aaa, i_0}^W(da)
\over \int_{\aaa} e^{a_{X_t^\star}-a_{i_0^\star}} \exp\left(-\demi\sum_{(i,j)\in E} W_{i,j}e^{a_{j^\star}-a_{i^\star}} (e^{T_{i}(t)+T_{j^\star}(t)}-1)\right) \nu_{\aaa, i_0}^W(da)}
\\
&=&
{\int_{\aaa} \phi(a) e^{a_{X_t^\star}} \exp\left(-\demi\sum_{(i,j)\in E} W^{T(t)}_{i,j}e^{a_{j^\star}-a_{i^\star}} \right) da
\over \int_{\aaa} e^{a_{X_t^\star}} \exp\left(-\demi\sum_{(i,j)\in E} W_{i,j}^{T(t)}e^{a_{j^\star}-a_{i^\star}} \right)da}
\\
&=&
\int_{\aaa} \phi(a) {1\over F^{W^{T(t)}}_{X_t}} \nu^{W^{T(t)}}_{\aaa,X_t} (da).
\end{eqnarray*}
\subsubsection{Proof of Corollary~\ref{cor_jump_rate}}
Under $\overline \P^W_{i_0}$, conditionnally on $A$ and on $\fff^X_t$,  if $X_t= i$, then $X$ jumps from $i$ to $j$, $i\to j$, at a rate $W^{A+T(t)}_{i,j}=W^{T(t)}_{i,j} e^{A_{j^\star}-A_{i^\star}} $. 
By Lemma \ref{flow}, conditionally on the past at time $t$, $A$ has law ${1\over F^{W^{T(t)}}_{X_t}} \nu_{\aaa,X_t}^{W^{T(t)}}$. 

Hence, conditionally on the past at time $t$, $X$ jumps from $i$ to $j$ at rate 
$$
W^{T(t)}_{i,j} \int e^{a_{j^\star}-a_{i^\star}}{1\over F^{W^{T(t)}}_i} \nu^{W^{T(t)}}_{\aaa,i}(da)= W^{T(t)}_{i,j} {F^{W^{T(t)}}_j\over F^{W^{T(t)}}_i}.
$$


\subsubsection{Proof of Lemma~\ref{randomized-mixture}}
We will prove that, if $\alpha$ satisfies the divergence condition \eqref{divergence}, if $(W_e)_{e\in \tilde E}$ are independent gamma random variables with parameters $(\alpha_e)_{e\in \tilde E}$, 
and, 
if, conditionally on $W$, $A$ is distributed according to $\nu_{\aaa,i_0}^W$, then $W^A\stackrel{law}{=} W$. This will conclude the proof, using Lemma \ref{lem_ERRW-VRJP}.

Let $\phi$ be any positive measurable test function, and let $C_\alpha:= \prod_{(i,j)\in \tilde E}  \Gamma(\alpha_{i,j})$.
 Then
\beq
&&\E\left( \phi(W^A)\right)
\\
&=&
{C_\alpha^{-1}}\int_{\R_+^{\tilde E}} \left( \int_{\aaa}  \phi(W^a){1\over F^W_{i_0}} \nu_{\aaa,i_0}^W(da)\right) \prod_{(i,j)\in \tilde E}  W_{i,j}^{\alpha_{i,j}-1} e^{-W_{i,j}}  dW_{i,j}
\\
&=&
C_\alpha^{-1}\int_{\R_+^{\tilde E} \times\aaa} \phi(W^a) \left( \prod_{(i,j)\in \tilde E}  W_{i,j}^{\alpha_{i,j}-1} e^{-W_{i,j}}  dW_{i,j} \right)
\frac{e^{a_{i_0^\star}-\demi\sum_{(i,j)\in E} W_{i,j}e^{a_i+a_{j^\star}}}}{ \tilde F^W_{i_0}}(\prod_{i\in  V_1} da_i),
\eeq
where we recall that ${ \tilde F^W_{i_0}}$ is defined in proposition \ref{Fa}.
Let us perform the change of variables
$$
(W,a)\to (\tilde W=W^a, a);
$$
then
\beq
&&
\prod_{(i,j)\in \tilde E}  W_{i,j}^{\alpha_{i,j}-1} e^{-W_{i,j}}  dW_{i,j}
\\
&=&
\left( \prod_{(i,j)\in \tilde E}  \tilde W_{i,j}^{\alpha_{i,j}-1}  d\tilde W_{i,j}\right)
e^{-\demi \sum_{(i,j)\in E} \alpha_{i,j}(a_i-a_j)} e^{- \sum_{(i,j)\in \tilde E} \tilde W_{i,j}e^{-a_i-a_{j^\star}}} 
\\&=&
\left( \prod_{(i,j)\in \tilde E}  \tilde W_{i,j}^{\alpha_{i,j}-1}  d\tilde W_{i,j}\right)
e^{-\demi \sum_{i\in V} a_i \dive(\alpha)(i)} e^{- \demi \sum_{(i,j)\in E} \tilde W_{i,j}e^{-a_i-a_{j^\star}}} 
\\
&=&
e^{-a_{i_0^\star}} \left( \prod_{(i,j)\in \tilde E}  \tilde W_{i,j}^{\alpha_{i,j}-1}  d\tilde W_{i,j}\right)
e^{- \demi \sum_{(i,j)\in E} \tilde W_{i,j}e^{-a_i-a_{j^\star}}}. 
\eeq
where we use the divergence condition~\eqref{divergence} in the last equality.

Using also that, by proposition \ref{Fa}, $e^{a_{i_0}} \tilde F^W_{i_0}= \tilde F^{W^a}_{i_0}$, we deduce that
\beq
&&\E\left( \phi(W^A)\right)
\\
&=&
C_\alpha^{-1}\int_{\R_+^{\tilde E} \times\aaa} \phi(\tilde W) \left( \prod_{(i,j)\in \tilde E}  \tilde W_{i,j}^{\alpha_{i,j}-1} e^{-\tilde W_{i,j}}  d\tilde W_{i,j} \right)
{ e^{-a_{i_0^\star}}\over \tilde F^{\tilde W}_{i_0}}  e^{-\sum_{(i,j)\in \tilde E} \tilde W_{i,j}e^{-a_i-a_{j^\star}}}(\prod_{i\in  V_1} da_i).
\eeq
Changing to variables $a'=-a$ and integrating on $a'$, we deduce
\beq
\E\left( \phi(W^A)\right)
=
C_\alpha^{-1}\int_{\R_+^{\tilde E}} \phi(\tilde W) \left( \prod_{(i,j)\in \tilde E}  \tilde W_{i,j}^{\alpha_{i,j}-1} e^{-\tilde W_{i,j}}  d\tilde W_{i,j} \right).
\eeq

\subsubsection{Proof of Proposition~\ref{exchangeability}}
From \eqref{density-A}, integrating on the initial random time $A\sim \nu_{i_0,\aaa}^W$, the probability that, under $\overline\P^W_{i_0}$, the randomized \sVRJP $X$ at time $t$ has performed $n$ jumps in infinitesimal time intervals $[t_l, t_l+dt_l)$, $l=1,\cdots, n$ with $0<t_1<\cdots <t_n<t=t_{n+1}$, following the trajectory
$\sigma_{0}=i_0, \sigma_1, \ldots, \sigma_n=j_0$ is equal to
\begin{eqnarray}\label{density-*0}
\left(\prod_{l=1}^{n} W_{\sigma_{l-1}, \sigma_l} e^{{T_{\sigma_{l-1}}(t_l)+T_{\sigma_l^\star}(t_l)}} dt_l\right)
{\tilde F^{W^{T(t)}}_{j_0}\over \tilde  F^W_{i_0}}
\end{eqnarray}
where we remind that $\tilde F^W_i$ is defined in \eqref{tilde_F}.

Remark that 
\beq
&&\prod_{l=1}^{n} W_{\sigma_{l-1}, \sigma_l} e^{{T_{\sigma_{l-1}}(t_l)+T_{\sigma_l^\star}(t_l)}}
\\
&=&
\left(\prod_{l=1}^{n} W_{\sigma_{l-1}, \sigma_l} e^{{T_{\sigma_{l-1}}(t_l)+T_{\sigma_{l-1}^\star}(t_l)}}\right)
\exp\left(\sum_{l=1}^n \left({T_{\sigma_{l}^\star}(t_l)-T_{\sigma_{l-1}^\star}(t_l)}\right) \right),
\eeq
and that
\begin{eqnarray}
\label{cancellation}
\sum_{l=1}^n \left({T_{\sigma_{l}^\star}(t_l)-T_{\sigma_{l-1}^\star}(t_l)}\right)
=-\sum_{l=0}^{n-1} \left({T_{\sigma_{l}^\star}(t_{l+1})-T_{\sigma_{l}^\star}(t_l)}\right)+T_{j_0^\star}(t).
\end{eqnarray}
Next, we observe that:
\begin{itemize}
\item
If $\sigma_l\not\in V_0$, then $T_{\sigma_l^\star}(t_l)=T_{\sigma_l^\star}(t_{l+1})$, since the process is at site $\s_l$ between times $t_l$ and $t_{l+1}$.
\item
If $\sigma_l=\sigma_l^\star=i\in V_0$, then the sum of $T_{\sigma_{l}}(t_{l+1})-T_{\sigma_{l}}(t_l)$ restricted on the indices $l$ such that $\sigma_l=i$ is equal to $T_i(t)$ (indeed, the local time $T_i$ is constant when the walk is not at vertex $i$),
\end{itemize}
which imply that \eqref{cancellation} equals $T_{j_0^\star}(t)-\sum_{i\in V_0} T_i(t)$. 

This implies that the expression \eqref{density-*0} is equal to
 \begin{eqnarray}
\label{density-*}
\exp\left(T_{j_0^\star}(t)-\sum_{i\in V_0} T_i(t)\right)\left(\prod_{l=1}^{n} W_{\sigma_{l-1}, \sigma_l} e^{{T_{\sigma_{l-1}}(t_l)+T_{\sigma_{l-1}^\star}(t_l)}}dt_l\right)
{\tilde F^{W^{T(t)}}_{j_0}\over \tilde F^W_{i_0}}.
\end{eqnarray}
Setting $M_i(t)=\demi(T_i(t)+T_{i^\star}(t))$, and using Proposition~\ref{Fa},  \eqref{density-*} is equal to
\begin{eqnarray}
\label{density-*-M}
{e^{M_{j_0^\star}(t)}\over \prod_{i\in V_0} e^{M_i(t)}}
\left(\prod_{l=1}^{n} W_{\sigma_{l-1}, \sigma_l} e^{{T_{\sigma_{l-1}}(t_l)+T_{\sigma_{l-1}^\star}(t_l)}}dt_l\right)
{\tilde F^{W^{M(t)}}_{j_0}\over \tilde F^W_{i_0}}.
\end{eqnarray}
Changing to time $s=C(t)$, we have
\begin{align}\label{ds_dt}
ds = e^{T_{X_t}(t)+T_{X_t^\star}(t)} dt.
\end{align}
We deduce that
\begin{eqnarray}
\nonumber
e^{T_{i}(t)+T_{i^\star}(t)}-1&=&\int_0^t e^{T_i(u)+T_{i^\star}(u)}(\indic_{X_u=i} +\indic_{X_u=i^\star}) du
\\
\label{formule_chgt_tps}
&=&
\int_0^s (\indic_{Z_v= i}+ \indic_{Z_v=i^\star}) dv
=\ell_i^Z(s)+\ell_{i^\star}^Z(s).
\end{eqnarray}
where $\ell^Z$ is the local time of the process $Z$. Hence, 
\begin{align}\label{M-vs-l}
e^{M_i(t)}=\sqrt{1+ \ell^Z_{i}(s)+\ell^Z_{i^\star}(s)},
\end{align}
Changing to time $s$ in the expression \eqref{density-*-M}, the probability that, under $\overline\P^W_{i_0}$, the time-changed randomized \sVRJP $Z$ at time $s$ has performed $n$ jumps in infinitesimal time intervals $[s_l, s_l+ds_l)$, $l=1,\cdots, n$ with $0<s_1<\cdots <s_n<s$, following the trajectory
$\sigma_{0}=i_0, \sigma_1, \ldots, \sigma_n=j_0$ is equal to
\beq
{\sqrt{1+\ell_{j_0}^Z(s)+\ell_{j_0^\star}^Z(s)}\over \prod_{i\in V_0} \sqrt{1+2\ell_i^{Z}(s)}}
{\tilde F^{\tilde W(\ell^Z(s))}_{j_0}\over \tilde F^W_{i_0}}
\left(\prod_{l=1}^{n} W_{\sigma_{l-1}, \sigma_l} ds_l\right),
\eeq
where $\tilde W_{i,j}(\ell^Z(s))=W_{i,j}\sqrt{1+\ell^Z_{i}(s)+\ell^Z_{i^\star}(s)}\sqrt{1+\ell^Z_{j}(s)+\ell^Z_{j^\star}(s)}$.

Hence, the law of the process $Z$ on a time interval $[0,s]$
depends only on the number of crossings of edges and on the local time at final time $s$.  This implies that 
$Z$ is partially exchangeable in the sense of \cite{Freedman}, see~\cite{Zeng16} Proposition~1. 
\begin{remark}\label{rmk_M_B_1} If one performs a similar computation under the non-randomized law $\P_{i_0}^W$, then the probability of a path depends on the whole  local time $(T_i(t))$, not only on its projection $(M_i(t))$ on $\sss$, contrary to the outcome for the randomized VRJP, under law $\overline \P_{i_0}^W$. Indeed, that local time $(T_i(t))$ cannot be expressed in terms of $(\ell_i^Z(s))$: in fact we can prove that 
\begin{equation}
\label{ilim}
e^{T_i(t)}=\sqrt{1+\ell_{i}^Z(s)+\ell_{i^\star}^Z(s)}e^{B^1_i(s)},
\end{equation} 
see Remark~\ref{rmk_M_B_2} below.
Here $B^1_i(s)$ is the functional defined in \eqref{def_A}, which depends on the trajectory of $Z$ up to time $s$, not just on its final local time.
\end{remark}

\section{Proof of Theorems~\ref{main} and \ref{main_bis}}
\label{proof:main}
In this section, we first prove Theorem~\ref{main} (the randomized case), from which we  deduce Theorem~\ref{main_bis}. 

\subsection{Notation}

We remind that $(X(t))_{t\ge 0}$ represents the canonical process on $\ddd([0,\infty),V)$, and that $\overline\P^{W}_{i_0}$ is the law of the randomized \sVRJP defined in Section~\ref{s_randomization}. Besides, $Z=X\circ C^{-1}$ is the time changed process defined in Section~\ref{s_randomization}. Under $\overline\P^{W}_{i_0}$, $Z$ is the randomized \sVRJP in exchangeable time-scale.

For $u\in \uuu^W_0$, let $P^{W,u}_{i_0}$ be the law on $\ddd([0,\infty),V)$, such that under $P^{W,u}_{i_0}$, $(X(t))$ is the process starting from $i_0$ which, conditioned on the past at time $t$, jumps from $i$ to $j$ at rate
$$
W_{i,j}e^{u_{j^\star}-u_{i^\star}}e^{T_i(t)+T_{i^\star}(t)}.
$$
The following simple lemma shows that the definition of $P^{W,u}_{i_0}$ is consistent with the definition given in Section~\ref{s_non-randomized}. 
\begin{lemma}\label{canonical-time-change}
Under $P^{W,u}_{i_0}$, $(Z_s)_{s\ge0}=(X_{C^{-1}(s)})_{s\ge0}$ is the Markov jump process starting at $i_0$ with jump rates $W_{i,j}e^{u_{j^\star}-u_{i^\star}}$.
\end{lemma}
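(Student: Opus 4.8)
The plan is to reduce the lemma to the elementary identity \eqref{ds_dt} combined with an explicit formula for the law of the trajectories of $X$ under $P^{W,u}_{i_0}$, in the spirit of \eqref{proba_traject}.

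First I would check that $C'(t)=e^{T_{X_t}(t)+T_{X_t^*}(t)}$ for all $t$. Differentiating $C(t)=\demi\sum_{i\in V}(e^{T_i(t)+T_{i^*}(t)}-1)$ and using $T_i'(t)=\indic_{X_t=i}$ gives
\[
C'(t)=\demi\sum_{i\in V}e^{T_i(t)+T_{i^*}(t)}\big(\indic_{X_t=i}+\indic_{X_t=i^*}\big).
\]
If $X_t=k$ with $k\neq k^*$, only the terms $i=k$ and $i=k^*$ contribute, each equal to $e^{T_k(t)+T_{k^*}(t)}$, so the sum is $e^{T_k(t)+T_{k^*}(t)}$; if $X_t=k\in V_0$, only the term $i=k$ contributes, with value $2\,e^{2T_k(t)}$, which again yields $e^{T_k(t)+T_{k^*}(t)}$ after multiplication by $\demi$. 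Thus $C$ is $C^1$, strictly increasing, with $C(0)=0$ and $C(t)\to\infty$ (since $\sum_i T_i(t)=t$ forces $\max_i(T_i(t)+T_{i^*}(t))\to\infty$); hence $C$ is a bijection of $[0,\infty)$ onto itself and $Z_s=X_{C^{-1}(s)}$ is a well-defined c\`adl\`ag process whose $\ell$-th jump occurs at $s_\ell=C(t_\ell)$, $t_\ell$ being the $\ell$-th jump time of $X$.

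Next I would write down, exactly as for \eqref{proba_traject}, the density under $P^{W,u}_{i_0}$ that $X$ performs $n$ jumps in the infinitesimal intervals $[t_\ell,t_\ell+dt_\ell)$, $0<t_1<\cdots<t_n<t$, along a path $\sigma_0=i_0,\sigma_1,\dots,\sigma_n$. With $q_i:=\sum_{j:\,i\to j}W_{i,j}e^{u_{j^*}-u_{i^*}}$, the rate out of $i$ at time $u$ under $P^{W,u}_{i_0}$ is $q_i\,e^{T_i(u)+T_{i^*}(u)}$, so this density equals
\[
\exp\Big(-\int_0^t q_{X_u}\,e^{T_{X_u}(u)+T_{X_u^*}(u)}\,du\Big)\,\prod_{\ell=1}^{n}W_{\sigma_{\ell-1},\sigma_\ell}\,e^{u_{\sigma_\ell^*}-u_{\sigma_{\ell-1}^*}}\,e^{T_{\sigma_{\ell-1}}(t_\ell)+T_{\sigma_{\ell-1}^*}(t_\ell)}\,dt_\ell.
\]
Now I perform the change of variables $s=C(t)$, $s_\ell=C(t_\ell)$. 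On each maximal interval on which $X$ sits at a fixed vertex $i$, the first step gives $du=e^{-T_i(u)-T_{i^*}(u)}\,ds$, so the holding-time integral in the exponent becomes $\int_0^s q_{Z_v}\,dv$; and since $X_{t_\ell^-}=\sigma_{\ell-1}$, the factor $e^{T_{\sigma_{\ell-1}}(t_\ell)+T_{\sigma_{\ell-1}^*}(t_\ell)}\,dt_\ell$ is precisely $ds_\ell$. Hence the density of the trajectory of $Z$ on $[0,s]$ is
\[
\exp\Big(-\int_0^s q_{Z_v}\,dv\Big)\,\prod_{\ell=1}^{n}W_{\sigma_{\ell-1},\sigma_\ell}\,e^{u_{\sigma_\ell^*}-u_{\sigma_{\ell-1}^*}}\,ds_\ell,
\]
which is exactly the law of the time-homogeneous Markov jump process started at $i_0$ with jump rates $W_{i,j}e^{u_{j^*}-u_{i^*}}$ (whose total rate out of $i$ is indeed $q_i$). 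Since $C(t)$ depends only on $(X_r)_{r\le t}$, the clock is adapted and the Markov property transfers from this density to $Z$, which proves the lemma.

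I do not expect a genuine obstacle: the entire content is the identity $C'(t)=e^{T_{X_t}(t)+T_{X_t^*}(t)}$ together with the bookkeeping of the change of variables. The only mildly delicate points are the separate treatment of self-dual and non-self-dual vertices in the first step, and --- if one would rather not manipulate the explicit trajectory density --- invoking a standard random-time-change lemma for Markov jump processes to pass the Markov property to $Z$, which is unproblematic because the clock $C$ is adapted and strictly increasing. One should also remark that non-explosion of $X$ on $[0,\infty)$ follows a posteriori, since $Z$ is a Markov jump process with bounded rates on a finite graph, hence non-explosive.
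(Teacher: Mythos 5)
Your proof is correct and follows essentially the same route as the paper: the whole content is the identity $ds=C'(t)\,dt=e^{T_{X_t}(t)+T_{X_t^*}(t)}\,dt$ from \eqref{ds_dt}, which the paper invokes directly (referring back to the trajectory-density computation in the proof of Proposition~\ref{exchangeability}), while you simply spell out that computation and the resulting change of variables in the holding-time integral and jump factors explicitly.
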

\begin{proof}
As in the proof of Proposition~\ref{exchangeability}, changing to time $s=C(t)$ we deduce that
$$
ds = e^{T_{X(t)}(t)+T_{X(t)^\star}(t)} dt,
$$
where we note that, in the definition of $C(t)$ in Proposition \ref{exchangeability}, $e^{T_i(t)+T_{i^\star}(t)}$ appears twice if $i\in V_1$. This  implies  
$
W_{i,j}e^{u_{j^\star}-u_{i^\star}}e^{T_i(t)+T_{i^\star}(t)}dt= W_{i,j}e^{u_{j^\star}-u_{i^\star}}ds.
$ 
\end{proof}

Next, define the process $\hat X(t)=(X(t),T(t))$, which is the joint process of position and local time. By Corollary~\ref{cor_jump_rate}, under $\overline\P^{W}_{i_0}$,
$(\hat X(t))$ is a Markov process with generator
\begin{eqnarray}\label{Gen}
\Gen^W f(i,t)={\partial \over \partial t_i}f(i,t) + \sum_{j,i\to j} W^t_{i,j} {F^{W^t}_j\over F^{W^t}_i}(f(j,t)-f(i,t)).
\end{eqnarray}
Denote by $\overline\P^{W}_{i_0,t^0}$ the law of the process $(\hat X(t))$ with generator $\Gen^W$,  starting from initial value $(i_0,t^0)$.
 
Let $\Gen^{W,u}$ be the generator of $(\hat X(t))$ under the law $P^{W,u}_{i_0}$. Then
\begin{eqnarray}\label{Gen-u}
\Gen^{W,u} f(i,t)={\partial \over \partial t_i}f(i,t) + \sum_{j,i\to j} W_{i,j} e^{t_i+t_{i^\star}+u_{j^\star}-u_{i^\star}} (f(j,t)-f(i,t)).
\end{eqnarray}
Also denote by $P^{W,u}_{i_0,t^0}$ the law of the Markov process with generator $\Gen^{W,u}$, starting from initial value $(i_0,t^0)$.

\subsection{Proof of Theorem~\ref{main}~(i)}
\hfill\break

\noindent{\it Step 1: Feynman-Kac identity}


From now on, we fix $\varphi:\uuu^W_0\mapsto [0,\infty)$ a positive bounded measurable function with compact support in $\uuu^W_0$. We denote by $\ccc_0\subset \uuu_0^W$ its support. The main result of the step 1 of the proof is the following key Feynman-Kac identity.
\begin{lemma}\label{Feynman-Kac}
For all $(j,\tau)\in V\times \R_+^V$, 
define the fonction
\begin{eqnarray}\label{Laplace}
\Psi (j,\tau)= \int_{\lll^{W^\tau}_0} \varphi\left(u+\overline \tau\right) {1\over F^{W^\tau}_j} \mu^{W^\tau}_{j}(du),
\end{eqnarray}
where $\overline\tau=(\overline\tau_i)_{i\in V}$, $\overline \tau_i=\tau_i-{1\over \vert V\vert}\sum_{j\in V} \tau_j$.
Then, for any starting point $i_0\in V$,
\begin{eqnarray}\label{FK-prop}
\Psi (i_0,0)= \overline\E_{i_0}^W\left(\Psi(X_t,T(t))\right).
\end{eqnarray}
\end{lemma}
\noindent N.B.: By an elementary computation we have that $u+\overline \tau\in \uuu_0^W$ if $u\in \uuu_0^{W{^\tau}}$ so that $\varphi(u+\overline \tau)$ is well defined in \eqref{Laplace}.

We could prove that lemma by direct computation, and directly verify that ${\mathcal L}^W(\psi)=0$. We use a more constructive approach in Lemma~\ref{lem_sol_inv}, as this also gives an insight on how the mixing measure has to be related to the law of the \sVRJP, in order to satisfy the Feynman-Kac identity. Our approach is also useful in the second part of the proof of Theorem~\ref{main}.


Define, for all $u\in \uuu^W_0$, $R^{W,u}:V\times \R_+^V\to \R_+$ by
$$
R^{W,u}(i_0, \tau)={F^{W^\tau}_{i_0}} e^{-\demi\sum_{i, j: i\to j} W_{i,j} (e^{\tau_i+\tau_{j^\star}}-e^{\tau_i+\tau_{i^\star}} e^{u_{j^\star}-u_{i^\star}})}
{e^{\tau_{i_0^\star}-u_{i_0^\star}}\over \prod_{i\in V_0}e^{{\tau_i}}}.
$$
\begin{lemma}\label{lem_sol_inv}
(i) 
For all $(i_0,\tau)\in V\times \R_+^V$, the Radon-Nykodym derivative of the randomized \sVRJP $\hat X$ under the law $\overline\P^W_{i_0,\tau}$, with respect to the law of the Markov Jump Process $P^{W,u}_{i_0,\tau}$ on time interval $[0,t]$, is given by
$$
\left({d\overline\P^W_{i_0,\tau}\over dP^{W,u}_{i_0,\tau}}\right)_{|[0,t]}={{R^{W,u}(\hat X(t))}\over R^{W,u}(i_0,\tau)}.
$$


\noindent
(ii) Let $\tau\in  \R_+^V$, 
 $i_0, j_0\in V$. For any positive measurable test function $\phi$, we have
\begin{eqnarray*}
&&
\int_{\lll_0^W} \phi\left(u\right) {R^{W,u}(i_0,0)\over R^{W,u}(j_0,\tau) }
{1\over F^W_{i_0}} \mu^W_{i_0}(du)
\\
&=& \int_{\lll_0^{W^\tau}} \phi\left(\left(\tilde u_i+\tau_i-{1\over \vert V\vert} \sum_{j\in V}\tau_j\right)_{i\in V}\right)
{1\over F^{W^\tau}_{j_0}} \mu_{j_0}^{W^\tau}(d \tilde u).
\end{eqnarray*}
\end{lemma}
\begin{proof}[Proof of Lemma~\ref{lem_sol_inv}]
{\it i)} By a direct adaptation of  \eqref{density-*} to the case where the initial local time is $T(0)=\tau$, the probability under $\overline\P^W_{i_0,\tau}$ that the randomized \sVRJP $X$ at time $t$ has performed $n$ jumps at times in $[t_l, t_l+dt_l)$, $l=1,\cdots, n$ with $0<t_1<\cdots <t_n<t$, following the trajectory
$\sigma_{0}=i_0, \sigma_1, \ldots, \sigma_n=j_0$ is equal to
\begin{eqnarray}
\nonumber
&&
\exp\left(T_{X(t)^\star}(t)-\tau_{i_0}-\sum_{i\in V_0} (T_i(t)-\tau_i)\right)\left(\prod_{l=1}^{n} W_{\sigma_{l-1}, \sigma_l} e^{{T_{\sigma_{l-1}}(t_l)+T_{\sigma_{l-1}^\star}(t_l)}}dt_l\right)
{\tilde F^{W^{T(t)}}_{j_0}\over \tilde F^{W^\tau}_{i_0}}
\end{eqnarray}
which can be written, using definition \eqref{tilde_F}, as
\begin{eqnarray} 
\label{density-*2}
&&
\left(\prod_{l=1}^{n} W_{\sigma_{l-1}, \sigma_l} e^{{T_{\sigma_{l-1}}(t_l)+T_{\sigma_{l-1}^\star}(t_l)}}dt_l\right)
\\
\nonumber
&\boldsymbol{\cdot}&
{e^{T_{X(t)^\star}(t)-\tau_{i_0}}
\over \prod_{i\in V_0} e^{T_i(t)-\tau_i}}
e^{-\demi\sum_{i,j: i\to j} W_{i,j}(e^{T_i(t)+T_{j^\star}(t)}-e^{\tau_i+\tau_j^\star})} {F^{W^{T(t)}}_{j_0}\over F^{W^\tau}_{i_0}}.
\end{eqnarray}
On the other hand, the probability that, under $P^{W,u}_{i_0}$, $X$ follows the same path is equal to
\begin{eqnarray}\label{path_Pwu_1}
\exp\left(-\int_0^t \sum_{j,X(s)\to j} W_{X(s),j} e^{u_{j^\star}-u_{X(s)^\star}}e^{T_{X(s)}(s)+T_{X(s)^\star}(s)}ds\right)
\\ \nonumber
\;\;\;\;\;\;
\times\left(\prod_{l=1}^n W_{\sigma_{l-1},\sigma_{l}}e^{T_{\sigma_{l-1}}(t_l)+T_{\sigma_{l-1}^\star}(t_l)}e^{u_{\sigma_{l}^\star}-u_{\sigma_{l-1}^\star}}dt_l\right).
\end{eqnarray}
Indeed, the first expression comes from the fact that under $P^{W,u}_{i_0}$, $X$ jumps from $i$ to $j$, $i\to j$, at a rate $W_{i,j}e^{T_i(t)+T_{i^\star}(t)}e^{u_{j\star}-u_{i\star}}$, conditionally on the past at time $t$, so that the jump rate at time $t$ is  
$$\sum_{j,X(t)\to j} W_{X(t),j} e^{u_{j^\star}-u_{X(t)^\star}}e^{T_{X(t)}(t)+T_{X(t)^\star}(t)}.$$ This explains the integral in the first exponential term. The product comes from the probability to jump in time interval $[t_l, t_l+dt_l]$.
Now note that
\beq
&&
{d\over dt}
\left(\sum_{i,j: i\to j} W_{i,j} e^{u_{j^\star}-u_{i^\star}}e^{T_i(t)+T_{i^\star}(t)}\right)=
\\
&&\sum_{j, X(t)\to j} W_{X(t),j} e^{u_{j^\star}-u_{X(t)^\star}}e^{T_{X(t)}(t)+T_{X(t)^\star}(t)}+
\sum_{j, X(t)^\star\to j^\star} W_{X(t)^\star,j^\star} e^{u_j-u_{X(t)}}e^{T_{X(t)}(t)+T_{X(t)^\star}(t)}.
\eeq
But, since $u\in \uuu_0^W$, we have, for all $i\in V$
$$
\sum_{j, i^\star\to j^\star} W_{i^\star,j^\star} e^{u_j-u_i}=\sum_{j, j\to i} W_{j,i} e^{u_j-u_i}= \sum_{j, i\to j} W_{i,j} e^{u_{j^\star}-u_{i^\star}}
$$
This implies that 
$$
\int_0^t \sum_{j,X(s)\to j} W_{X(s),j} e^{u_{j^\star}-u_{X(s)^\star}}e^{T_{X(s)}(s)+T_{X(s)^\star}(s)}ds
=
\demi\sum_{i,j: i\to j} W_{i,j} e^{u_{j^\star}-u_{i^\star}}(e^{T_i(t)+T_{i^\star}(t)}-e^{\tau_i+\tau_{i^\star}}).
$$
Hence, \eqref{path_Pwu_1} is equal to
\begin{equation}\label{density-Wu}
e^{u_{j_0^\star}-u_{i_0^\star}}
e^{-\demi\sum_{i,j: i\to j} W_{i,j} e^{u_{j^\star}-u_{i^\star}}(e^{T_i(t)+T_{i^\star}(t)}-e^{\tau_i+\tau_{i^\star}})}
\left(\prod_{l=1}^n W_{\sigma_{l-1},\sigma_{l}}e^{T_{\sigma_{l-1}}(t_l)+T_{\sigma_{l-1}^\star}(t_l)}dt_k\right).
\end{equation}
The expression of the Radon-Nykodim derivative follows from taking the ratio of \eqref{density-*2} and \eqref{density-Wu}.

\noindent (ii)
From the definition of $\mu_{i_0}^W$ and $R^{W,u}$, for $i_0,j_0\in V$, $\tau\in \R_+^V$,
\begin{eqnarray*}
&&{R^{W,u}(i_0,0)\over R^{W,u}(j_0,\tau) } {1\over F^W_{i_0}}\mu^W_{i_0}(du)
\\
&=&
{\sqrt{\vert V\vert} \sqrt{2}^{-\vert V_1\vert}\over F^{W^\tau}_{j_0}}
e^{\demi\sum_{i,j: i\to j} W_{i,j} (e^{\tau_i+\tau_{j^\star}}-e^{\tau_i+\tau_{i^\star}} e^{u_{j^\star}-u_{i^\star}})}
{e^{-\tau_{j_0^\star}}\over \prod_{i\in V_0}e^{-{\tau_i}}} 
{e^{u_{j_0^\star}-\sum_{i\in V_0} u_i} \over\sqrt{2\pi}^{\vert V_0\vert-1}}  {\sqrt{D(W^u)}\over \det_{\aaa}(-K^u)}\sigma_{\lll_0^W}(du)
\\
&=&
{\sqrt{\vert V\vert} \sqrt{2}^{-\vert V_1\vert} \over F^{W^\tau}_{j_0}}
e^{-\demi\sum_{i,j: i\to j}  W^\tau_{i,j} (e^{u_{j^\star}-u_{i^\star}-\tau_{j\star}+\tau_{i^\star}}-1)} {e^{u_{j_0^\star}-\tau_{j_0^\star}-\sum_{i\in V_0} (u_i-\tau_i)} \over\sqrt{2\pi}^{\vert V_0\vert-1}}   {\sqrt{D(W^u)}\over \det_{\aaa}(-K^u)}\sigma_{\lll_0^W}(du).
\end{eqnarray*}
Changing from variables $(u)_{i\in V}$ to $(\tilde u)_{i\in V}$, given by
$$
\tilde u_i:= u_i-\tau_i+{1\over \vert V\vert} \sum_{j\in V} \tau_j,\;\;\;\forall i\in V,
$$
 we deduce:
\begin{eqnarray*}
&&e^{-\demi\sum_{i,j: i\to j}  W^\tau_{i,j} (e^{u_{j^\star}-u_{i^\star}-\tau_{j\star}+\tau_{i^\star}}-1)} e^{u_{j_0^\star}-\tau_{j_0^\star}-\sum_{i\in V_0} (u_i-\tau_i)}
\\
&=&e^{-\demi\sum_{i,j: i\to j}  W^\tau_{i,j} (e^{\tilde u_{j^\star}-\tilde u_{i^\star}}-1)} e^{\tilde u_{j_0^\star}-\sum_{i\in V_0} \tilde u_i+{\vert V_0\vert-1\over \vert V\vert}\sum_{i\in V}\tau_i}
\end{eqnarray*}
and
$$
{\sqrt{D(W^u)}}= e^{-{\vert V\vert -1\over \vert V\vert}\sum_{i\in V} \tau_i} {\sqrt{D(W^{\tau+\tilde u})}},
\;\;\;
{\det}_{\aaa}( K^u)=e^{-{2\dim(\aaa)\over \vert V\vert} \sum_{i\in V} \tau_i}{\det}_{\aaa}( -K^{\tau+\tilde u}).
$$
Moreover, $u\in \uuu_0^W$ iff $\tilde u\in \uuu_0^{W^\tau}$ and $\sigma_{\lll_0^W}(du)= \sigma_{\lll_0^{W^\tau}}(d\tilde u)$, see \eqref{volume_bis}. Since $\vert V_0\vert=\vert V\vert -2\dim(\aaa)$, this concludes the proof.
\end{proof}
\begin{proof}[Proof of Lemma~\ref{Feynman-Kac}]
By Lemma~\ref{lem_sol_inv}~(ii) applied to $\phi=\varphi$, we have
\begin{align*}
&\overline\E_{i_0}^W\left(  \int_{\uuu^{W^{T(t)}}_0} \varphi(u+\overline T(t))
{1\over F^{W^{T(t)}}_{X_t}} 
\mu_{X_t}^{W^{T(t)}}(du)\right)
\\
=&
\overline\E_{i_0}^W\left( \int_{\uuu^{W}_0}\varphi(u) {R^{W,u}(i_0,0)\over R^{W,u}(X(t),T(t))}
{1\over F^W_{i_0}} 
\mu_{i_0}^{W}(du)\right)
\\
=&
 \int_{\uuu^{W}_0} \varphi(u) \overline\E_{i_0}^{W}\left( {R^{W,u}(i_0,0)\over R^{W,u}(X(t),T(t))}\right)
 {1\over F^W_{i_0}} \mu_{i_0}^{W}(du)
 \\
=&
 \int_{\uuu^{W}_0} \varphi(u) E_{i_0}^{W,u}\left(1\right)
 {1\over F^W_{i_0}} \mu_{i_0}^{W}(du) 
\\
=&
\Psi(i_0,0),
\end{align*}
where we used Lemma~\ref{lem_sol_inv}~(i) in the penultimate identity.
\end{proof}

\noindent{\it Step 2: Asymptotic Gaussian estimates.}

The strategy is now to prove that $\psi(X_t,T(t))$ converges {a.s.} to $\varphi(U)$ where $U$ is the limit defined in Theorem~\ref{main}~i), and to obtain a good bound on $\psi(X_t,T(t))$ to apply dominated convergence. Remind that, at this stage of the proof, we do not know that ${1\over F_{i_0}^W}\mu_{i_0}^W$ is a probability measure, so that we do not have an obvious bound on $\psi(X_t,T(t))$, even though $\varphi$ is bounded. 

Following the notation in Lemma~\ref{Feynman-Kac}, we set
$$
\overline T(t)=T(t)-t/ \vert V\vert \in \hhh_0 \;\; \hbox{ and } \;\; \overline H(t)=p_{\lll_0^W}(\overline T(t)).
$$
the projection of $\overline T(t)$ on the limiting manifold $\lll_0^W$. Besides we set $H(t)=\overline H(t)+t/\vert V\vert$.
Let 
$A(t):=\overline H(t)-\overline T(t)$ which is in $\aaa$ by Lemma~\ref{transversal}. We also have $A(t)=H(t)-T(t)$. Lemma \ref{Convergence} yields, with the notation of Theorem \ref{main} i), that
$$\lim_{t\to \infty} A+\overline T(t)=U.
$$ 
This implies subsequently, using $U\in \lll_0^W$, that 
$$\lim_{t\to \infty} p_{\lll_0^W}(A+\overline T(t))=\lim_{t\to \infty} \overline H(t)=U,
$$ 
and therefore that $\lim_{t\to \infty}A(t)=A$. 

If $u\in \uuu_0^W$, set
\begin{eqnarray}\label{etaWU}
\eta(W^u)=
{\sqrt{\vert V\vert} \sqrt{2}^{-\vert V_1\vert}\over \sqrt{2\pi}^{\vert V_0\cup V_1\vert -1}}{\sqrt{D(W^u)}\over \det_{\aaa}( K^{W^u} )}.
\end{eqnarray}
With this notation, for all  $(i_0,\tau)\in V\times \R_+^V$, we have, using $\uuu_0^{W^{\tau}}=\uuu_0^{W^{\overline\tau}}$,
\beq
&&\Psi (i_0,\tau)
\\
&=&{1\over F^{W^{\tau}}_{i_0}}\int_{\lll^{W^\otau}_0} \varphi(u+\overline \tau)  e^{u_{i^\star_0}} e^{-\sum_{i\in V_0} u_{i}}e^{-\demi \sum_{(i,j)\in E}W^\tau_{i,j}(e^{u_{j^\star}-u_{i^\star}}-1)}\eta(W^{\tau+u}){d\sigma_{\lll_0^{W^\otau}}({u})}
\\
&=&
{1\over \tilde F^{W^\tau}_{i_0}}\int_{\lll^{W^\otau}_0} \varphi(u+\overline \tau) e^{u_{i^\star_0}} e^{-\sum_{i\in V_0} u_{i}}e^{-\demi \sum_{(i,j)\in E}W^\tau_{i,j}e^{u_{j^\star}-u_{i^\star}}}\eta(W^{\tau+u}){d\sigma_{\lll_0^{W^\otau}}({u})}.
\eeq
Using that
$$
\tilde F^{W^{T(t)}}_{i_0}= e^{A_{i^\star_0}(t)} \tilde F^{W^{H(t)}}_{i_0},
$$
(see Proposition \ref{Fa}), and changing to variable $\tilde u_i= u_i -A_i(t)$, we have $\tilde u\in \uuu_0^{W^{\overline H(t)}}$ and
\begin{align}\label{tilde_psi}
&\;\;\;\;\;\;\; \Psi (i_0,T(t))
\\
\nonumber &={1\over \tilde F^{W^{H(t)}}_{i_0}} 
\int_{\lll^{W^{\overline H(t)}}_0} \varphi(\tilde u+\overline H(t)) e^{\tilde u_{i^\star_0}-\sum_{j\in V_0} \tilde u_{j}}e^{-\demi \sum_{(i,j)\in E}W^{H(t)}_{i,j}e^{\tilde u_{j^\star}-\tilde u_{i^\star}}}\eta(W^{H(t)+\tilde u}){d\sigma_{\lll_0^{W^{\overline H(t)}}}({\tilde u})}
\\
\nonumber &=
{1\over F^{W^{H(t)}}_{i_0}}
\int_{\lll^{W^{\overline H(t)}}_0} \varphi(\tilde u+\overline H(t)) e^{\tilde u_{i^\star_0}-\sum_{j\in V_0} \tilde u_{j}}e^{-\demi \sum_{(i,j)\in E}W^{H(t)}_{i,j}(e^{\tilde u_{j^\star}-\tilde u_{i^\star}}-1)}\eta(W^{H(t)+\tilde u}){d\sigma_{\lll_0^{W^{\overline H(t)}}}({\tilde u})}
\\
\nonumber &:=
{\tilde \Psi(i_0,H(t))\over  F^{W^{H(t)}}_{i_0}},
\end{align}
where, by definition, $\tilde \Psi(i_0,H(t))$ is the integral term in the penultimate expression.

Set $w=\inf_{(i,j)\in E}W_{i,j}$, and 
\begin{align}\label{c_0}
c_0=\max_{u\in \ccc_0} \max_{(i,j)\in E} \vert \nabla u_{i^\star,j^\star}\vert,
\end{align}
where $\nabla u_{i^\star,j^\star}=u_{j^\star}-u_{i^\star}$,
which is finite since $\ccc_0$ is compact.

We now introduce the compact set $\ccc_1$ of all $u\in\uuu_0^W$ such that 
$$ \max_{(i,j)\in E} \vert \nabla u_{i^\star,j^\star}\vert \le c_0+1.$$
Note that the interior of $\ccc_1$ contains $\ccc_0$, and that, for all $u'\in \ccc_1^c$,  
\begin{eqnarray}\label{separation}
\max_{(i,j)\in E} \vert \nabla u'_{i^\star,j^\star}\vert > c_0+1.
\end{eqnarray}

The strategy is to treat separately the case $\overline H(t)\in \ccc_1$, where we use the compactness of $\ccc_1$ to have uniform estimates, and the case $\overline H(t)\in \ccc_1^c$, where we use that $\hbox{supp}(\varphi)=\ccc_0$ to prove a uniform convergence to 0 of  $\psi(X(t),T(t))$.

Recall that $\ppp_{\sss}$ and $\ppp_\aaa$ are the orthogonal projections on subspaces $\sss$ and
$\aaa$, and that $K^u$ was introduced in \eqref{Ku}. For $u\in \lll_0^W$,  $y\in \R^V$, 
$${}^t(K^u)(y^\star)(i^\star)=\sum_{j:j\to i^\star}W_{j,i^\star}^u(y_{j^\star}-y_i)=\sum_{j:i\to j}W_{i,j}^u(y_j-y_i)=K^u(y)(i),$$ hence
$$
\ppp_\sss K^u \ppp_\sss = \ppp_\sss(\demi(K^u +{}^t(K^u)))\ppp_\sss, \;\;\; \ppp_\aaa K^u \ppp_\aaa = \ppp_\aaa(\demi(K^u +{}^t(K^u)))\ppp_\aaa,
$$
which implies that $\ppp_\sss K^u \ppp_\sss$ and $\ppp_\aaa K^u \ppp_\aaa$ are symmetric. For simplicity, we denote as before by ${\det}_{\sss_0}((-K^u)^{-1})$ the determinant of the operator $\ppp_{\sss_0}(-K^u)^{-1}\ppp_{\sss_0}$ restricted to $\sss_0$, and by ${\det}_{\aaa}(-K^u)$ the determinant of the operator $\ppp_{\aaa}(-K^u)\ppp_{\aaa}$ restricted to $\aaa$.
\begin{lemma}\label{Gaussian-asymptotic}
We have, when $t\to \infty$,
$$
\indic_{\overline H(t)\in \ccc_1} \tilde \psi(X(t),H(t)) \sim \indic_{\overline H(t)\in \ccc_1} e^{-{t\over N} \vert V_1\vert } \sqrt{2\pi}^{\vert V_0\cup V_1\vert -1} \varphi(\overline H(t)) \eta(W^{\overline H(t)}) \sqrt{ {\det}_{\sss_0}\left(-(K^{\overline H(t)})^{-1}\right)},
$$
$$
\indic_{\overline H(t)\in \ccc_1}  F^{W^{H(t)}}_i
\sim \indic_{\overline H(t)\in \ccc_1}  \sqrt{2}^{-\vert V_1\vert}e^{-{t\over N}\vert V_1\vert} 
 {\det}_{\aaa}(-K^{\overline H(t)})^{-\demi},
$$
with a uniform control of the same order for each term.
\end{lemma}
\begin{lemma}\label{out_C1}
There exist positive constants $c$ and $c'$ such that
\begin{eqnarray}\label{bound_out_C1}
\indic_{\overline H(t)\in \ccc_1^c} \psi(X(t),T(t))\le c \exp\left( - c' e^{t/\vert V\vert}\right).
\end{eqnarray}
\end{lemma}
\begin{proof}[Proof of Lemma~\ref{Gaussian-asymptotic}]

When $t$ tends to $\infty$, $W^{H(t)}=e^{2t/N}W^{\overline H(t)}$ is equivalent to $e^{2t/N}W^U$. 
The main exponential term in the integrand of $\tilde \Psi_\lambda(i,H(t))$ is
$$
-e^{2t/N} 
\demi \sum_{(i,j)\in E}W^{\overline H(t)}_{i,j}(e^{u_{j^\star}-u_{i^\star}}-1).
$$
The maximum of the last expression is obtained for $u=0$ since $H(t)\in \lll_0^W$, by Lemma \ref{transversal}.
This implies that the first order Taylor expansion cancels out. A second order expansion yields 
$$
 \sum_{(i,j)\in \tilde E}W^{\overline H(t)}_{i,j}(e^{u_{j^\star}-u_{i^\star}}-1)
= \demi \sum_{(i,j)\in \tilde E}W^{\overline H(t)}_{i,j}(u_{j^\star}-u_{i^\star})^2 + o(\| u\|^2).
$$
Besides, the remainder term $o(\| u\|^2)$ is uniform for $\overline H(t)$ in $\ccc_1$ and $u$ such that $\overline H(t)+u\in \ccc_0$. 

We recall from Lemma~\ref{tangent} that $T_0^{W^{\overline H(t)}}=\left({}^tK^{\overline H(t)}\right)^{-1}(\sss_0)$ is the tangent space at $0$ of $\uuu_0^{W^{\overline H(t)}}$.
We denote by $\sigma_{ T_0^{W^{\overline H(t)}}}$ the volume measure induced by $\sigma_{\uuu_0^{W^{\overline H(t)}}}$ so that, by \eqref{volume}, $\sigma_{T_0^{W^{\overline H(t)}}}(B)=\lambda_{\sss_0}(\ppp_{\sss}(B))$. 
For large $t$, the integral in $\tilde \Psi_\lambda(i,H(t))$ concentrates around $u=0$. 

Since we have a uniform quadratic estimate of the exponential term, we can localize the integral in a ball of size $B(0, e^{(1+\epsilon) t/N})$, change to variables $x_i= e^{t/N} \tilde u_i$, and integrate on the tangent plane $T_0^{W^{\overline H(t)}}$ (details are easy and left to the reader).
Using that $\lll_0^W$ has dimension $\vert V_0\vert +\vert V_1\vert -1$ and that $\eta$ is $(\vert V_0\vert -1)/2$ homogeneous,
 $\indic_{\overline H(t)\in \ccc_1} \tilde \Psi (i,H(t))$ is equivalent to
\begin{eqnarray*}
 && e^{-{t\over N} \vert V_1\vert }  \indic_{\overline H(t)\in \ccc_1} \varphi(\overline H(t)) \eta(W^{\overline H(t)}) \int_{T_0^{W^{\overline H(t)}}} e^{-\frac{1} {4} \sum_i \sum_{j, i\to j} W^{\overline H(t)}_{i,j} (x_{i^\star}-x_{j^\star})^2}
d\sigma_{ T_0^{W^{\overline H(t)}}}(x)
 \\
 &=&  e^{-{t\over N} \vert V_1\vert }  \indic_{\overline H(t)\in \ccc_1} \varphi(\overline H(t)) \eta(W^{\overline H(t)}) \int_{T_0^{W^{\overline H(t)}}} e^{-\frac{1} {4}  \sum_i \sum_{j, j\to i} W^{\overline H(t)}_{j,i} (x_{i}-x_{j})^2}
d\sigma_{ T_0^{W^{\overline H(t)}}}(x)
 \\
 &=&
 e^{-{t\over N} \vert V_1\vert }  \indic_{\overline H(t)\in \ccc_1} \varphi(\overline H(t)) \eta(W^{\overline H(t)})
 \int_{T_0^{W^{\overline H(t)}}} e^{\demi ({}^tK^{\overline H(t)} x, x)}
d\sigma_{ T_0^{W^{\overline H(t)}}}(x).
 \eeq
 By Lemma \ref{tangent}, we use the change of variables $y={}^t(K^{\overline H(t)})(x)\in \sss_0$, which yields that the previous expression is equal to
  \begin{align}
  \nonumber
 &e^{-{t\over N} \vert V_1\vert }  \indic_{\overline H(t)\in \ccc_1} \varphi(\overline H(t)) \eta(W^{\overline H(t)}) {\det}_{\sss_0}\left( -(K^{\overline H(t)})^{-1}\right) \int_{\sss_0} e^{\demi\left((K^{\overline H(t)})^{-1} y, y\right)}
d\lambda_{\sss_0}\left(dy \right).
 \\
 \label{estimate-tilde-psi}
 \;\;\;\;\,\,\,\,&= e^{-{t\over N} \vert V_1\vert } \indic_{\overline H(t)\in \ccc_1} \sqrt{2\pi}^{\vert V_0\cup V_1\vert -1} \varphi(\overline H(t)) {\eta(W^{\overline H(t)})} {\det}_{\sss_0}\left(-(K^{\overline H(t)})^{-1}\right)^{\demi}
 \end{align}
 since $\overline H(t)\to U$ and since $\varphi$ is supported on $\ccc_0\subset \ccc_1$.
 
We can apply a very similar reasoning to $F^{W^{H(t)}}_i$: changing  variables $a$ to $\tilde a = e^{t/N} a$, we deduce
\begin{align}
\nonumber
\indic_{\overline H(t)\in \ccc_1} F^{W^{H(t)}}_i&\sim &
{1\over \sqrt{2\pi}^{\vert V_1\vert}} e^{-{t\over N}\vert V_1\vert} \indic_{\overline H(t)\in \ccc_1} \int_{\aaa} e^{-\demi\sum_i \sum_{j} W^{\overline H(t)}_{i,j} (\tilde a_{j}-\tilde a_i)^2}  d\tilde a 
\\
\nonumber
&=&
{1\over \sqrt{2\pi}^{\vert V_1\vert}}e^{-{t\over N}\vert V_1\vert} \indic_{\overline H(t)\in \ccc_1}
\int_{\aaa} e^{\demi \left(\tilde a^\star,K^{\overline H(t)} \tilde a\right)} d\tilde a
\end{align}
where as before $(\cdot,\cdot)$ is the usual scalar product on $\R^V$.  Let us denote by $\lambda_\aaa$ the Euclidian volume measure on $\aaa$. then we have $d\lambda_\aaa=\sqrt{2}^{\vert V_1\vert} \prod_{i\in V_1} da_i$, hence 
\begin{eqnarray}
\nonumber
\indic_{\overline H(t)\in \ccc_1} F^{W^{H(t)}}_i
&\sim&{\sqrt{2}^{-\vert V_1\vert}\over \sqrt{2\pi}^{\vert V_1\vert}}e^{-{t\over N}\vert V_1\vert} \indic_{\overline H(t)\in \ccc_1}
\int_{\aaa} e^{\demi \left(\tilde a^\star,K^{\overline H(t)} \tilde a\right)} d\lambda_{\aaa}(a)
\\
\label{estimate-F}
&=&
\sqrt{2}^{-\vert V_1\vert}e^{-{t\over N}\vert V_1\vert} 
 \indic_{\overline H(t)\in \ccc_1} {\det}_{\aaa}(-K^{\overline H(t)})^{-\demi}.
 \end{eqnarray}
\end{proof}
\begin{proof}[Proof of Lemma~\ref{out_C1}.]

In this proof, $c$, $c'$, $c''$ are positive constants (depending only on the parameters of the model and on $\varphi$), whose values can change from line to line. We first show an exponential upper bound for $\indic_{\overline H(t)\in \ccc_1^c} \tilde \psi(i,H(t))$ valid for any $i\in V$, hence in particular for $X_t$. 

We start by observing that, $\varphi$ having compact support $\ccc_0$, we have a uniform bound 
$$
\indic_{u+\overline H(t)\in \ccc_0} \varphi(u+\overline H(t))\eta(u+\overline H(t))\le c \indic_{u+\overline H(t)\in \ccc_0}.
$$
Besides, in $\tilde \psi(i,H(t))$, we integrate on the compact set $\ccc_0-\overline H(t)$, which has measure $\lambda_{\sss_0}(\ppp_{\sss}(\ccc_0-\overline H(t)))= \lambda_{\sss_0}(\ppp_{\sss}(\ccc_0))$, see \eqref{volume}. Hence 
\begin{align*}
\tilde \psi(i,H(t))&\le 
c 
\max_{u\in \uuu_0^W, \, u+\overline H(t)\in \ccc_0} e^{ u_{i^\star_0}-\sum_{j\in V_0} u_{j}}e^{-\demi e^{2t/\vert V\vert}\sum_{(i,j)\in E}W^{\overline H(t)}_{i,j}(e^{ u_{j^\star}- u_{i^\star}}-1)}
\\
&\le c 
e^{(\vert V_0\vert +1)\max_{i\in V} \vert \overline H_i(t)\vert} \max_{u\in \uuu_0^W, \, u+\overline H(t)\in \ccc_0} e^{-\demi e^{2t/\vert V\vert}\sum_{(i,j)\in E}W^{\overline H(t)}_{i,j}(e^{ u_{j^\star}- u_{i^\star}}-1)}.
\end{align*}
Also note that 
\begin{eqnarray}\label{bound_H}
\max_{i\in V} \vert \overline H_i(t)\vert\le c't+c.
\end{eqnarray}
Indeed, $T_i(t)\le t$ implies $\vert \overline T_i(t)\vert \le t$. Also, as $\overline H(t)=\overline T(t) + A(t)$ and $A(t)\in \aaa$, we have $\vert \overline H_i(t)+\overline H_{i^\star}(t)\vert=\vert \overline T_i(t)+\overline T_{i^\star}(t)\vert\le 2t$. Besides,  Lemma~\ref{transversal} implies 
$$
\sum_{i,j\in E} W_{i,j}e^{\overline H_i(t)+\overline H_{j^\star}(t)}\le \sum_{i,j\in E} W_{i,j}e^{\overline T_i(t)+\overline T_{j^\star}(t)},
$$
thus, for $(i,j)\in E$,  $\overline H_i(t)+\overline H_{j^\star}(t)\le 2 t+c$ for $t$ large enough, which implies subsequently that for $(i,j)\in E$, $\overline H_{j^\star}(t)-\overline H_{i^\star}(t)=\overline H_i(t)+\overline H_{j^\star}(t)-(\overline H_i(t)+\overline H_{i^\star}(t))\le 4 t+c$. We deduce \eqref{bound_H} since $\sum_{i\in V} \overline H_i(t)=0$, and since the graph $\ggg$ is strongly connected.

Hence
\begin{align}\label{first_bound}
\tilde \psi(i,H(t))
&\le c 
e^{c' t} \max_{u\in \uuu_0^W, \, u+\overline H(t)\in \ccc_0} e^{-\demi e^{2t/\vert V\vert}\sum_{(i,j)\in E}W^{\overline H(t)}_{i,j}(e^{ u_{j^\star}- u_{i^\star}}-1)}.
\end{align}
The aim is now to prove that
\begin{eqnarray}\label{minoration}
\min_{\stackrel{\overline H \in \ccc_1^c}
{u\in \uuu_0^W, u+\overline H \in \ccc_0} }
\left( \sum_{(i,j)\in E} W^{\overline H}_{i,j}(e^{ u_{j^\star}- u_{i^\star}}-1)\right) >0.
\end{eqnarray}

 We now fix $\overline H \in \ccc_1^c$ and $u\in \uuu_0^W$ such that $u+\overline H \in \ccc_0$.
Since $\overline H\in \uuu_0^W$, we have
\begin{eqnarray*}
\sum_{(i,j)\in E}W^{\overline H}_{i,j}(e^{u_{j^\star}-u_{i^\star}}-1)&=& 
\sum_{(i,j)\in E}W^{\overline H}_{i,j}(e^{\nabla u_{i^\star,j^\star}}- \nabla u_{i^\star,j^\star} -1).
\end{eqnarray*}
Remark that $e^s-s-1$ is positive convex and minimal at $s=0$, and $e^s-s-1\ge \varepsilon_0$ for $\vert s\vert \ge 1$ and that $e^s-s-1\ge \varepsilon_1 e^{s}$ for $s \ge 1$, for some constants $\varepsilon_0>0$ and $\varepsilon_1>0$. 

Consider the set 
$$
E'=\{ (i,j)\in E, \hbox { such that } 
\vert \nabla \overline H_{i^\star,j^\star} \vert \le c_0 + 1 \hbox { and } \vert \nabla \overline H_{j,i}\vert \le c_0 + 1
\}.
$$
Remark that $E'$ can be considered as a subset of $\tilde E$ since $(i,j)\in E'$ implies $(j^\star,i^\star)\in E'$.  By \eqref{separation}, since $\overline{H}\in\ccc_1^c$, we know that $E'$ is strictly contained in $E$.

Using $\sum_{i\in V} \overline H_i=0$, we can find $i_0\in V$ such that $\overline H_{i_0}+\overline H_{i^\star_0}\ge 0$. 
Consider now the set $V'$ of vertices which can be reached from $i_0$ by a directed path in $E'$. If $j_0\in V'$ then $H_{j_0}+H_{j_0^\star}\ge -(c_0+1) 2 \vert E\vert$.  

Since $E'$ is strictly included in $E$ there exist $j_0\in V'$ and $j_1\in V$ such that $(j_0,j_1)\in E\setminus E'$. If $\vert \nabla \overline H_{j_0^\star,j_1^\star}\vert > c_0 + 1$, then $\vert \nabla u_{j_0^\star,j_1^\star}\vert > 1$, using $u+\overline{H}\in\ccc_0$ and \eqref{c_0}. 

We consider two cases: if $H_{j_0}+H_{j_1^\star}\ge -(c_0+1)(2\vert E\vert +1)$, then
$$
W_{j_0,j_1}^{\overline H}(e^{\nabla u_{j_0^\star,j_1^\star}}- \nabla u_{j_0^\star,j_1^\star} -1)\ge \varepsilon_0 w e^{-(c_0+1)(2\vert E\vert +1)},
$$
while if $H_{j_0}+H_{j_1^\star} < -(c_0+1)(2\vert E\vert +1)$ then $\nabla \overline H_{j_0^\star,j_1^\star}\le -(c_0+1)$ since, which implies subsequently that $ \nabla u_{j_0^\star,j_1^\star}\ge 1$ since $\vert \nabla (u+\overline{H})_{j_0^\star,j_1^\star}\vert \le c_0$, using $u+\overline{H}\in\ccc_0$. This implies that
$$
W_{j_0,j_1}^{\overline H}(e^{\nabla u_{j_0^\star,j_1^\star}}- \nabla u_{j_0^\star,j_1^\star} -1)\ge \varepsilon_1 W_{j_0,j_1} e^{\overline H_{j_0}+\overline H_{j_0^\star}} e^{\nabla(u+\overline H)_{j_0^\star,j_1^\star}} \ge \varepsilon_1 w e^{-(c_0+1)2\vert E\vert -c_0}.
$$

On the other hand, if $\vert \nabla \overline H_{j_0^\star,j_1^\star}\vert \le c_0 + 1$ and $\vert \nabla \overline H_{j_1,j_0}\vert > c_0 + 1$ we have 
$$
e^{H_{j_0}+H_{j_1^\star}}=e^{H_{j_0}+H_{j_0^\star}+\nabla\overline H_{j_0^\star,j_1^\star}}\ge e^{-(c_0+1)} e^{H_{j_0}+H_{j_0^\star}},
$$
hence, considering the term associated with the edge $(j_1^\star,j_0^\star)$,
$$
W_{j_1^\star,j_0^\star}^{\overline H}(e^{\nabla u_{j_1,j_0}}- \nabla u_{j_1,j_0} -1)
\ge \varepsilon_0 W_{j_0,j_1}^{\overline H}\ge \demi W_{j_0,j_1} e^{H_{j_0}+H_{j_0^\star}} e^{-(c_0+1)} \ge\varepsilon_0 w e^{-(c_0+1)(2\vert E\vert +1)}.
$$
This concludes the proof of the estimate \eqref{minoration}.

Combining \eqref{first_bound} and \eqref{minoration}, we deduce that
\begin{align}\label{bound_tilde_psi}
\indic_{\overline H(t)\in \ccc_1^c} \tilde \psi(i,H(t))
&\le c 
e^{c' t}  e^{-c'' e^{2t/\vert V\vert}} .
\end{align}

We now need a lower bound for $F_i^{W^{H(t)}}$. Since $\overline H(t)\in \uuu_0^W$, the exponential term of the integrand can be rewritten as follows: 
$$
-e^{2t/\vert V\vert} 
\demi \sum_{(i,j)\in E}W^{\overline H(t)}_{i,j}(e^{a_{j^\star}-a_{i^\star}}-1)= 
-e^{2t/\vert V\vert} 
\demi \sum_{(i,j)\in E}W^{\overline H(t)}_{i,j}(e^{\nabla a_{i^\star,j^\star}}- \nabla a_{i^\star,j^\star} -1).
$$
By \eqref{bound_H} we know that $e^{t/\vert V\vert}e^{\overline H_i(t)+\overline H_{j^\star}(t)}\le e^{2ct}$. We set $\aaa(t)=\{a\in \aaa, \; a_i\le e^{-ct}\}$ with the same constant $c$. Using that $e^s-s-1\le 2 s^2$ for $s\le 1$, we deduce
$$
F_i^{W^{H(t)}}
\ge c'\int_{\aaa(t)} e^{- \sum_{i,j} W_{i,j}} da = c'\left(2 e^{-ct} \right)^{\vert V_1\vert} e^{-\sum_{i,j} W_{i,j}}.
$$
Combining with \eqref{bound_tilde_psi}  concludes the proof.
\end{proof}
\noindent{\it Step 3: final computations.}

From \eqref{FK-prop} and \eqref{tilde_psi}, and since  $\overline H(t)\to U$, we can apply the dominated convergence theorem to deduce that
\begin{eqnarray*}
\psi(i_0,0)&=&\lim_{t\to \infty} \E_{i_0}^W\left( \psi(X(t),T(t))\right)
\\
&=&\lim_{t\to \infty} \E_{i_0}^W\left( \indic_{\overline H(t)\in \ccc_1} \psi(X(t),T(t))\right)
\\
&=&
\E_{i_0}^W\left( {\sqrt{2}^{\vert V_1\vert}} \varphi(U) {\eta(W^U)} \sqrt{{\det}_{\sss_0}(-(K^U)^{-1}) {\det}_{\aaa}(-K^U)} \right).
\end{eqnarray*}
Indeed,  we have a uniform control in Lemma~\ref{Gaussian-asymptotic}, since the functions involved are continuous in $\overline H(t)$. This implies that $\indic_{\overline H(t)\in \ccc_1} \psi(X(t),T(t))$ is bounded, and  Lemma~\ref{out_C1} yields a control outside $\ccc_1$.

Coming back to the definition of $\eta(W^U)$, see \eqref{etaWU}, we have
$$
{\sqrt{2}^{\vert V_1\vert}}  {\eta(W^U)} \sqrt{{\det}_{\sss_0}(-(K^U)^{-1}) {\det}_{\aaa}(-K^U)}=\sqrt{\vert V\vert} \sqrt{D(W^U)} \sqrt{{\det}_{\sss_0}(-(K^{U})^{-1})\over {\det}_{\aaa}(-K^U)}.
$$
For simplicity, we simply write $K$ for $K^U$ below. Firstly, we have
\begin{align}\label{ratio_det}
{{\det}_{\aaa}(-K)\over {\det}_{\sss_0}(-K^{-1})}= {\det}_{\hhh_0} (-K).
\end{align}
Indeed, if we write $K$ by blocks on $\hhh_0= \aaa\oplus \sss_0$ as
$$
K=\left(\begin{array}{ll}  \ppp_\aaa K\ppp_\aaa &\ppp_\aaa K\ppp_{\sss_0} \\ \ppp_{\sss_0} K\ppp_{\aaa} &\ppp_{\sss_0}K\ppp_{\sss_0}
\end{array}\right).
$$
we note that
$$
\left(\begin{array}{cc}  \ppp_\aaa K\ppp_\aaa &\ppp_\aaa K\ppp_{\sss_0} \\0&\Id_{\sss_0}
\end{array}\right) K^{-1} = \left(\begin{array}{cc}  \Id_\aaa &0\\
\ppp_{\sss_0} K^{-1}\ppp_{\aaa} & \ppp_{\sss_0}K^{-1} \ppp_{\sss_0}
\end{array}\right).
$$
Next, we prove that
\begin{align}\label{N_D_K}
\vert V\vert \D(W^U)= {\det}_{\hhh_0} (-K).
\end{align}
In order to compute the determinant of the operator $K_{|\hhh_0}$, we choose a basis of $\hhh_0$: a convenient one in this context is the basis $(e_i=\delta_i-\delta_{i_0})_{i\neq i_0}$. In this basis, we have
$$
K(e_i)=\sum_{j\neq i_0} (K_{i,j}-K_{i_0,j})e_j.
$$
We deduce
$$
 {\det}_{\hhh_0} (-K)=\det(- \left(K_{i,j}-K_{i_0,j}\right)_{i\neq i_0,j\neq i_0}).
$$
Since the sum of columns of $K$ is null, summing all columns in a column $i_1$, now note that
\begin{align*}
&\det(- \left(K_{i,j}-K_{i_0,j}\right)_{i\neq i_0,j\neq i_0})=\det(- \left(K_{i,j}-K_{i_0,j}\right)_{i\neq i_0\,j\neq i_0,i_1},(|V|K_{i_0,j})_{i\neq i_0\,j=i_1})\\
&=|V|\det(- \left(K_{i,j}-K_{i_0,j}\right)_{i\neq i_0\,j\neq i_0,i_1},(-K_{i_1,j})_{i\neq i_0\,j=i_1})\\
&=\vert V\vert \det(- \left(K_{i,j}\right)_{i\neq i_0,j\neq i_0})=\vert V\vert D(W^U).
\end{align*}
We conclude that $\psi(i_0,0)=\E_{i_0}^W\left(\varphi(U) \right)$, which implies that ${1\over F_{i_0}^W}\mu_{i_0}^W$ is the law of $U$, hence in particular that it must be a probability measure. 


\subsection{Proof of Theorem~\ref{main}~(ii)}

The statement~(ii) is equivalent to the following equality of probabilities,
$$
\int_{\uuu_{0}^W} P_{i_0}^{W,u}(\cdot) {1\over F^W_{i_0}}\mu_{i_0}^W(du)=\overline \P_{i_0}^W(\cdot),
$$
since under $P_{i_0}^{W,u}$, $(Z_s)=(X_{C^{-1}(s)})$ is a Markov jump process with jump rate $W_{i,j}e^{u_j-u_i}$.

Let $\phi((X_v)_{v\le t})$ be a positive measurable test function of the trajectory of $X$ up to time $t$. By Lemma~\ref{lem_sol_inv}~(i), we have
\begin{align*}
\int_{\uuu_{0}^W} E_{i_0}^{W,u}(\phi((X_v)_{v\le t}))  {1\over F^W_{i_0}}\mu_{i_0}^W(du)
&=
\int_{\uuu_{0}^W} \overline \E_{i_0}^{W}\left({R^{W,u}(i_0,0)\over R^{W,u}(\hat X_t)}\phi((X_v)_{v\le t})\right)  {1\over F^W_{i_0}} \mu_{i_0}^W(du)
\\
&=
\overline \E_{i_0}^{W}\left(\phi((X_v)_{v\le t})  \int_{\uuu_{0}^W} {R^{W,u}(i_0,0)\over R^{W,u}(\hat X_t)}  {1\over F^W_{i_0}} \mu_{i_0}^W(du)\right).
\end{align*}
Using  Lemma~\ref{lem_sol_inv}~(ii),
we obtain
$$
 \int_{\uuu_{0}^W} {R^{W,u}(i_0,0)\over R^{W,u}(\hat X_t)}  {1\over F^W_{i_0}} \mu_{i_0}^W(du)=  
 \int_{\uuu_{0}^{W^{T(t)}}}   {1\over F^{W^{T(t)}}_{X_t}} \mu_{X_t}^{W^{T(t)}}(d\tilde u)=1
 $$
 since $  {1\over F^{W^{T(t)}}_{X_t}} \mu_{X_t}^{W^{T(t)}}$ is a probability measure by Theorem~\ref{main}~(i). Hence
$$
\int_{\uuu_{0}^W} E_{i_0}^{W,u}(\phi((X_v)_{v\le t}))  {1\over F^W_{i_0}} \mu_{i_0}^W(du)= 
\overline \E_{i_0}^{W}\left(\phi((X_v)_{v\le t}) \right).
$$
Finally, if $u\in \uuu_0^W$, then under $P^{W,u}_{i_0}$, $(Z_s)$ is Yaglom reversible since $\pi_i:=e^{u_i+u_{i^\star}}$ obviously satisfies $\pi_i W_{i,j} e^{u_{j^\star}-u_{i^\star}}= \pi_{j^\star}W_{j^\star,i^\star}e^{u_i-u_{j}}$ and is an invariant measure, using $u\in \uuu_0^W$. 

\subsection{Proof of Proposition~\ref{Prop_convergence_B} and Theorem~\ref{main_bis}}
\begin{proof}[Proof of Proposition~\ref{Prop_convergence_B}]
Since $u\in \uuu_0^W$, we know that under the law $P^{W,u}_{i_0}$, $(Z_s)$ is the Markov process with jump rates $W_{i,j}e^{u_{j^\star}-u_{i^\star}}$, which has invariant measure 
$$
\pi_i(u)={e^{u_i+u_{i ^\star}}\over \sum_{j\in V} e^{u_j+u_{j^\star}}},\;\;\;\forall i\in V.
$$
Hence  $\lim_{s\to \infty}{1\over s}( \ell_i(\s)-\ell_{i^\star(s)})=0$ a.s. and, by the central limit theorem, the speed of convergence is of order ${1\over \sqrt{s}}$. In particular, we deduce that  ${1\over s} (\ell_i(\s)-\ell_{i^\star(s)})=o(s^{-{1\over 4}})$ a.s.. Using $(\indic_{Z_s=i}-\indic_{Z_s={i^\star}}) ds=d\left(\ell_i(s)-\ell_{i^\star}(s)\right)$, an integration by parts yields
\begin{eqnarray*}
B^\theta_i(s)&=&\demi \int_0^s {d\left(\ell_i(u)-\ell_{i^\star}(u)\right)\over \theta_i+\ell_i(u)+\ell_{i^\star}(u)}
\\
&=&\demi {\ell_i(s)-\ell_{i^\star}(s)\over \theta_i+\ell_i(s)+\ell_{i^\star}(s)}
+\demi \int_0^s {(\ell_i(u)-\ell_{i^\star}(u))(\indic_{Z_u=i}+\indic_{Z_u={i^\star}})\over (\theta_i+\ell_i(u)+\ell_{i^\star}(u))^2}du.
\end{eqnarray*}
The first term converges to 0, and the second is a convergent integral by previous considerations.

The formula \eqref{add} is a direct consequence of the Markov property of $(Z_s)_{s\ge0}$ under $P^{W,u}_{i_0}$, and it is also true for a stopping time instead of fixed time $s$.

Moreover, if $\tau$ is the stopping time defined as the first time the Markov Process $Z_s$ leaves the last point it has visited in $V$, then $B_i^\theta(\tau)$ has a density on $\R^V$, since the jump rates are exponential. By \eqref{add}, $B_i^\theta(\infty)$ is the convolution of  $B_i^\theta(\tau)$ under $P^{W,u}_{i_0}$ and $B^{\theta+\ell(\tau)+\ell^\star(\tau)}(\infty)$ under $P^{W,u}_{Z_\tau}$, hence it has a density.
\end{proof}

Theorem~\ref{main_bis} will be a consequence of the following Proposition \ref{prop:cond}.
\begin{proposition}[Limit theorem for the $\star$-VRJP conditioned on $A$]\label{prop:cond}

(i) Under $\overline \P^{W}_{i_0}$, conditionally on $A$, $U$ is distributed according to
\begin{align*}
{ f^{W,u}_{i_0}\left(\demi(u-u^\star)-A\right)e^{-A_{i_0^\star}}e^{\demi\sum_{(i,j)\in E} W_{i,j}(e^{A_i+A_{j^\star}}-1)}} \mu_{i_0}^W(du).
\end{align*}

(ii) Under $\overline \P^{W}_{i_0}$, conditionally on $A$ and $U$,
  $(Z_s)$ has the law of a conditioned Markov jump process, more precisely:
$$
\overline\P^{W}_{i_0}\left(\;\cdot\;\vert \; A,U\right)=P^{W,U}_{i_0}\left(\; \cdot\; \vert \; B^1(\infty)=\demi(U-U^\star)-A \right) \;\;\; \hbox{ a.s..}
$$
\end{proposition}

Proposition \ref{prop:cond} implies Theorem \ref{main_bis} by the following argument. Since $\nu_{\aaa,i_0}^W$ has a density on $\aaa$, (i) means that for almost all $a\in \aaa$ under  $\P^{W^a}_{i_0}$, the law of the non-ramdomized $\star$-VRJP at rates $W^a$, the law of $U$ is given by the formula above (with $a$ instead of $A$). Hence, for a.e. $W$, conditionally on $a=0$ (which corresponds to the non-randomized $\star$-VRJP $\P^{W}_{i_0}$), the law of $U$ is
$$
f_{i_0}^{W,u} \left((u-u^\star)/2\right) \cdot \mu_{i_0}^W(du)
$$
and, if we condition further  on $U$, the \sVRJP has the law $P^{W,U}_{i_0}\left(\; \cdot\; \vert \; B^1(\infty)=(U-U^\star)/2 \right)$.

\begin{proof}[Proof of Proposition~\ref{prop:cond}]
By definition of $U=(U_i)_{i\in V}$ in Theorem~\ref{main}, we have
$$
\demi (U_i-U_{i^\star})= A_i+\lim_{t\to\infty} \demi(T_i(t)-T_{i^\star}(t)).
$$
Recall the time change defined by $s=C(t)$ defined in Proposition~\ref{exchangeability}. By \eqref{ds_dt}, 
$$
ds= e^{T_i(t)+T_{i^\star}(t)}dt.
$$
Besides, \eqref{M-vs-l} implies
$$
e^{T_i(t)+T_{i^\star}(t)}=1+\ell_i^Z(s)+\ell_{i^\star}^Z(s),
$$
hence
$$
dt= {1\over 1+\ell_i^Z(s)+\ell_{i^\star}^Z(s)} ds.
$$
Changing from time $t$ to time $s$, we deduce
$$
\demi(T_i(t)-T_{i^\star}(t))=\demi\int_{0}^t \left( \indic_{X_v=i}-\indic_{X_v=i^\star}\right) dv= \demi\int_{0}^s {\indic_{Z_u=i}-\indic_{Z_u=i^\star}\over 1+\ell_i^Z(u)+\ell_{i^\star}^Z(u)} du= B_i^1(s).
$$
\begin{remark}\label{rmk_M_B_2}
Note that this proves the formula \eqref{ilim}, since $$e^{T_i(t)}= e^{\demi (T_i(t)+T_{i^\star}(t))}e^{\demi(T_i(t)-T_{i^\star}(t))}=
\sqrt{1+\ell_i^Z(s)+\ell_{i^\star}^Z(s)} e^{B_i(s)},$$ with the time change $s=C(t)$.
\end{remark}
This yields 
\begin{align}\label{lim_B}
A= \demi(U-U^\star)-B^1(\infty).
\end{align}
Under $\overline \P^W_{i_0}$, $A$ is distributed according to ${1\over F_{i_0}^W} \nu_{\aaa,i_0}^W$, and, conditionally on $U$, $(Z_t)$ has law $P^{W,U}_{i_0}$.
Now, under $P^{W,U}_{i_0}$, $B^1(\infty)$ has density  $f^{W,U}_{i_0}(b)$. Hence, under $\overline\P^{W}_{i_0}$ and  conditionally on $U$, $A$ has a density
$$
f^{W,U}_{i_0}(\demi(U-U^\star)-a).
$$
Next, apply Bayes formula : under $\overline\P^{W}_{i_0}$, $A$ is distributed according to ${1\over F_{i_0}^W} \nu_{i_0}^W$, and $U$ is distributed according to ${1\over F^W_{i_0}}\mu_{i_0}^W$. We deduce that, under $\overline\P^{W}_{i_0}$ and conditionally on $A$, $U$ has distribution
\begin{align*}
{ f^{W,u}_{i_0}\left(\demi(u-u^\star)-A\right)e^{-A_{i_0^\star}}e^{\demi\sum_{(i,j)\in E} W_{i,j}(e^{A_i+A_{j^\star}}-1)}} \mu_{i_0}^W(du).
\end{align*}
\end{proof}



\section{Proof of the results of Section~\ref{sec_beta_pot}: Lemma~\ref{lem_identities}, Theorems~\ref{Thm_beta} and~\ref{GThm_beta} and Proposition~\ref{conditionning}}

\label{pr}
Equality~\eqref{eq:beta} in Theorem~\ref{Thm_beta} is a special case of Theorem~\ref{GThm_beta}. The proof of Theorem~\ref{GThm_beta} works by induction on $\dim(\sss)$. 
The section is organized as follows: we start by proving the initialization step of the induction, then we prove the key identities of Lemma~\ref{lem_identities} and deduce Theorem~\ref{GThm_beta} and Proposition~\ref{conditionning}. At the end of the section we prove the statement of Theorem~\ref{Thm_beta} about the finiteness of the integrals.

\subsection{Initialization of the induction, step 1: reduction of the problem}


We start by proving Theorem~\ref{GThm_beta} in the case of a self-dual point or a pair of dual points. Surprisingly, the proof is rather difficult and relies on the Lagrange resolvent method to solve the polynomial equations of degree 4 (\cite{Lagrange} Section 30 and 31, or \cite{Cox} Section~12.C and \cite{Galuzzi}, Section~3.1 for modern references).
\begin{lemma}\label{beta_d=2}
Theorem~\ref{GThm_beta} is true for $V=\{i\}$, $i=i^\star$, or for a pair of dual points, $\vert V\vert=2$ and $V=\{i,i^\star\}$.  
\end{lemma}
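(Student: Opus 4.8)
\emph{Strategy.} The plan is to prove the two cases by explicit computation, the pair of dual points being the substantial one. \emph{Case $V=\{i\}$, $i=i^*$.} Here $\sss=\R$, $\aaa=\{0\}$, $V_0=\{i\}$, and since $W$ has been normalized to vanish on the diagonal, $H_\beta=\beta_i$ is a scalar, $G_\beta=\beta_i^{-1}$, $\indic_{H_\beta>0}=\indic_{\beta_i>0}$. The right-hand side of \eqref{eq:beta} is an integral over a single point with $\sqrt{2\pi}^{\dim(\aaa)}=1$, and its exponent vanishes at $a=0$, so it equals $1$. The left-hand side equals $\theta_i e^{\theta_i\eta_i}\int_0^\infty (2\pi\beta_i)^{-1/2}e^{-\frac12(\theta_i^2\beta_i+\eta_i^2/\beta_i)}\,d\beta_i$, which is $1$ by the classical normalization of the generalized inverse Gaussian density, $\int_0^\infty(2\pi\beta)^{-1/2}e^{-\frac12(A\beta+B/\beta)}\,d\beta=A^{-1/2}e^{-\sqrt{AB}}$ (for $A>0$, $B\ge0$), with $A=\theta_i^2$, $B=\eta_i^2$.

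\emph{Case $V=\{i,i^*\}$, $i\neq i^*$: reduction to a scalar identity.} Write $p=W_{i,i^*}>0$ and $q=W_{i^*,i}>0$; these are unconstrained by \eqref{symmetry-VRJP}, in general $p\neq q$, and the diagonal of $W$ vanishes. Here $\dim\sss=\dim\aaa=1$, $V_0=\emptyset$; parametrize $\sss$ by $\beta:=\beta_i=\beta_{i^*}$ and $\aaa$ by $\alpha:=a_i=-a_{i^*}$. A direct computation gives $\left<\theta,H_\beta\theta\right>=2\beta\theta_i\theta_{i^*}-p\theta_{i^*}^2-q\theta_i^2$, $\det H_\beta=\beta^2-pq$, $\indic_{H_\beta>0}=\indic_{\beta>\sqrt{pq}}$, $\left<\eta,G_\beta\eta\right>=(q\eta_i^2+p\eta_{i^*}^2+2\beta\eta_i\eta_{i^*})/(\beta^2-pq)$, $\left<\theta,\eta\right>=\theta_i\eta_{i^*}+\theta_{i^*}\eta_i$, and (with $v=e^{a^*}\theta$, so $v_i=e^{-\alpha}\theta_i$, $v_{i^*}=e^{\alpha}\theta_{i^*}$) $\left<v,Wv\right>=p\,e^{2\alpha}\theta_{i^*}^2+q\,e^{-2\alpha}\theta_i^2$, $\left<\eta,v-\theta\right>=\eta_{i^*}\theta_i(e^{-\alpha}-1)+\eta_i\theta_{i^*}(e^{\alpha}-1)$. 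The common prefactor $\exp\!\big(\left<\theta,\eta\right>+\tfrac12 p\theta_{i^*}^2+\tfrac12 q\theta_i^2\big)$ and the common $\sqrt{2\pi}^{-1}$ cancel, so \eqref{eq:beta} reduces to
\begin{align*}
&\int_{\sqrt{pq}}^\infty \exp\!\Big(-\beta\theta_i\theta_{i^*}-\frac{q\eta_i^2+p\eta_{i^*}^2+2\beta\eta_i\eta_{i^*}}{2(\beta^2-pq)}\Big)\frac{d\beta}{\sqrt{\beta^2-pq}}\\
&\qquad=\int_{\R}\exp\!\Big(-\frac p2 e^{2\alpha}\theta_{i^*}^2-\frac q2 e^{-2\alpha}\theta_i^2-\eta_{i^*}e^{-\alpha}\theta_i-\eta_i e^{\alpha}\theta_{i^*}\Big)d\alpha.
\end{align*}

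\emph{Matching the two scalar integrals.} Put $x=e^{\alpha}$, so the right-hand exponent is $-f(x)$ with $f(x)=\tfrac p2\theta_{i^*}^2 x^2+\eta_i\theta_{i^*}x+\eta_{i^*}\theta_i x^{-1}+\tfrac q2\theta_i^2 x^{-2}$ and $d\alpha=dx/x$, while the left-hand exponent is $-g(\beta)$ with $g(\beta)=\beta\theta_i\theta_{i^*}+\tfrac12\left<\eta,G_\beta\eta\right>$. One checks that $f$ on $(0,\infty)$ and $g$ on $(\sqrt{pq},\infty)$ are each proper and unimodal (they blow up at both ends, single interior minimum) and that $\min f=\min g=:m$. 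The plan is then to push both integrals forward to the level $v$ of the exponent: splitting each domain at the minimizer and substituting $v=f(x)$, resp. $v=g(\beta)$, gives $\int e^{-f}\,dx/x=\int_m^\infty e^{-v}\rho_f(v)\,dv$ with $\rho_f(v)=\sum_{x>0:\,f(x)=v}(|f'(x)|\,x)^{-1}$, and likewise $\int e^{-g}\,d\beta/\sqrt{\beta^2-pq}=\int_m^\infty e^{-v}\rho_g(v)\,dv$ with $\rho_g(v)=\sum_{\beta>\sqrt{pq}:\,g(\beta)=v}(|g'(\beta)|\sqrt{\beta^2-pq})^{-1}$, so it remains to prove $\rho_f\equiv\rho_g$. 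Now $g(\beta)=v$, after clearing $\beta^2-pq$, is a \emph{cubic} in $\beta$, whereas $f(x)=v$, after multiplying by $x^2$, is the \emph{quartic} $\tfrac p2\theta_{i^*}^2 x^4+\eta_i\theta_{i^*}x^3-v\,x^2+\eta_{i^*}\theta_i x+\tfrac q2\theta_i^2=0$. The two positive roots relevant to $\rho_f$ are the roots of the quadratic factor of this quartic carrying them; by the Lagrange resolvent method for the quartic (\cite{Lagrange}, or \cite{Cox}, Section~12.C, \cite{Galuzzi}, Section~3.1) this quadratic factor is governed by the resolvent cubic, which — after clearing radicals — one identifies with the cubic $g(\beta)=v$; matching the Jacobian weights $|f'(x)|\,x$ and $|g'(\beta)|\sqrt{\beta^2-pq}$ then follows by differentiating that resolvent relation. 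As an independent anchor, when $\eta=0$ both sides equal $K_0(\sqrt{pq}\,\theta_i\theta_{i^*})$ — the left via $\beta=\sqrt{pq}\cosh\phi$, the right via $e^{2\alpha}=\sqrt{q/p}\,(\theta_i/\theta_{i^*})\,e^{2\psi}$ and $\int_\R e^{-a\cosh\phi}\,d\phi=2K_0(a)$.

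\emph{Main obstacle.} Everything except the last step (the case $|V|=1$, the reduction to the scalar identity, the $\eta=0$ check) is routine; the difficulty is the quartic $f(x)=v$. One must (i) identify the two real roots lying on the branches over $(m,\infty)$, check their positivity and that they correspond to $\beta\in(\sqrt{pq},\infty)$, i.e.\ to the region $H_\beta>0$; (ii) show that $\rho_f(v)$, the sum of the Jacobian weights over these two roots, is the \emph{same} rational function of $v$ as $\rho_g(v)$ — this is precisely where the algebraic identity between the resolvent cubic and the $\beta$-cubic $g(\beta)=v$ enters, and where the Lagrange resolvent is indispensable; and (iii) track the signs of $f'$ and $g'$ across the minima so that the absolute values in $\rho_f,\rho_g$ are handled consistently.
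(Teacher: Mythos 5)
Your treatment of the easy parts is correct and matches the paper: the one-point case via the generalized inverse Gaussian (equivalently $K_{1/2}$) normalization, and the reduction of the two-point case to a single scalar identity between an integral over $\beta\in(\sqrt{pq},\infty)$ with weight $d\beta/\sqrt{\beta^2-pq}$ and an integral over $\alpha\in\R$ (the paper first rescales to $\theta=1$, you keep $\theta$ general, which is equivalent). You have also identified the right strategy for the hard part — push both integrals forward by the level of the exponent and compare the resulting densities $\rho_f$ and $\rho_g$, with the Lagrange resolvent linking the quartic level equation on the $\aaa$-side to the cubic level equation on the $\sss$-side. But the proof stops exactly where the real work begins: you assert that the resolvent cubic "is identified, after clearing radicals, with the cubic $g(\beta)=v$" and that matching the Jacobian weights "follows by differentiating that resolvent relation," and then you list these very points as the main obstacle. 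In the paper this is the entire content of Appendix~\ref{Lagrange} (Lemma~\ref{lem_Lagrange}): one must (a) normalize so the quartic takes the form $x^4+2Ax^3-2ux^2+2Bx+1$, i.e.\ with product of roots equal to $1$; (b) carry out a sign and ordering analysis of the four real roots ($d<-b<-a<c<0<a<b$) to decide which root of each quadratic $t^2+2\beta t+1=0$, resp.\ $t^2+2\alpha t+1=0$, equals $ac$, resp.\ $bc$, yielding $ac=-\beta+\sqrt{\beta^2-1}$ and $bc=-\alpha+\sqrt{\alpha^2-1}$; and (c) differentiate these relations in $u$ and subtract to get $\tfrac{a'}{a}-\tfrac{b'}{b}=\tfrac{\alpha'}{\sqrt{\alpha^2-1}}-\tfrac{\beta'}{\sqrt{\beta^2-1}}$, which is precisely the equality $\rho_f=\rho_g$ that gives \eqref{integrales_Lagrange}. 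None of (a)--(c) is in your argument, so the proof is incomplete at its decisive step.

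There is also a concrete technical inaccuracy in the way you set up that step: in your unnormalized parametrization the quartic $\tfrac p2\theta_{i^*}^2x^4+\eta_i\theta_{i^*}x^3-vx^2+\eta_{i^*}\theta_i x+\tfrac q2\theta_i^2$ has root product $q\theta_i^2/(p\theta_{i^*}^2)$, which is not $1$ in general, so its Lagrange resolvent is \emph{not} (even after an affine change of the resolvent variable) literally the cubic obtained from $g(\beta)=v$; the clean identification used in the paper requires first the rescalings $x\mapsto$ suitably normalized exponential variable and $\beta\mapsto\beta/\sqrt{pq}$ (after absorbing $\theta$ into $W,\eta$), which is exactly what produces the polynomials $P$ and $Q$ of \eqref{pol_P} and \eqref{pol_Q} and makes $abcd=1$ — the hypothesis on which the pairing $ac\cdot bd=1$, $ad\cdot bc=1$ and hence the whole Jacobian-matching argument rests. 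So, beyond carrying out the missing computation, you would first need this normalization for your claimed resolvent identification to be true as stated.
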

\begin{proof}[Step 1 of the proof of Lemma~\ref{beta_d=2}]
By a simple change of variables $\tilde \beta_i=\theta_i\theta_i^\star\beta_i$ we obtain $\int_\sss \nu_\sss^{W,\theta,\eta}(d\beta)=\int_\sss \nu_\sss^{\tilde W,1,\tilde \eta}(d\beta)$ with  $\tilde W_{i,j}=\theta_{i}^\star\theta_j W_{i,j}$, $\tilde \eta_i=\theta_i^\star\eta_i$. On the other hand, from the definition, we have that $\int_\aaa \nu_\aaa^{W,\theta,\eta}(da)=\int_\aaa \nu_\aaa^{\tilde W,1,\tilde \eta}(da)$. Hence it is enough to prove the Lemma in the case $\theta=1$, which is what we assume below.
Besides, note that we have $W_{i,i}=W_{i^\star,i^\star}$. Changing $\beta_i$ into $\beta_i-W_{i,i}$ in $\nu_\sss^{W,\theta,\eta}$, we can always assume that $W$ is null on the diagonal, since on $\nu_\aaa^{W,\theta,\eta}$ the diagonal terms of $W$ cancel directly. 

Assume first that $V=\{i\}$ with $i=i^\star$ a self-dual point, the proof is  simple in this case:
\begin{align*}
\int_\sss \nu_\sss^{W,1,\theta}(d\beta)&={1\over \sqrt{2\pi}} \int_{0}^\infty {d\beta\over \sqrt{\beta}}\exp(-\demi(\beta+{\eta^2\over\beta})+\eta)
\\
&=
{\sqrt{\eta}e^{\eta}\over \sqrt{2\pi}} \int_{-\infty}^\infty \exp(-\eta\cosh(y))e^{\demi y} dy
\\
&=
{\sqrt{\eta}e^{\eta}\over \sqrt{2\pi}} 2 K_{\demi}(\eta)=1,
\end{align*}
where in the third equality we made the change of variable ${\beta\over \eta}=e^y$, and where $K_{\demi}(\eta)={\sqrt{\pi\over 2\eta}}e^{-\eta}$ is the modified Bessel function of the second kind with order $\demi$. On the other hand $\aaa=\emptyset$ and $\nu_\aaa^{W,1,\theta}$ is the constant 1.

Assume now that $V=\{i,i^\star\}$ is a pair of dual points, with $i\neq i^\star$. For simplicity we write $\tilde \nu_\sss^{W,1,\theta}(d\beta)=e^{-\demi\left<1,W 1\right>-\left<\eta,1\right>} \nu_\sss^{W,1,\eta}(d\beta)$ and $\tilde \nu_\aaa^{W,1,\eta}(da)=e^{-\demi\left<1,W 1\right>-\left<\eta,1\right>} \nu_\aaa^{W,1,\eta}(da)$. We first simplify the integral of $\tilde \nu_{\sss}^{W,1,\eta}(d\beta)$.
Set $w_1=W_{i,i^\star}$, $w_2=W_{i^\star,i}$ and $\eta_1=\eta_i$, $\eta_2=\eta_{i^\star}$ (and remind that we assume $\theta_i=\theta_{i^\star}=1$ and $W_{i,i}=W_{i^\star,i^\star}=0$), and $\beta_i=\beta_{i^\star}$ which we simply denote $\beta$. With these notations, we have 
$$
G_{\beta}={1\over \beta^2-w_1w_2} 
\left(\begin{matrix} \beta &w_1
\\ w_2&\beta
\end{matrix}\right);\;\; \left<\eta, G_\beta\eta\right>={2\eta_1\eta_2\beta+w_2\eta_1^2+w_1\eta_2^2\over \beta^2-w_1w_2}.
$$ 
With the notation
\begin{eqnarray}\label{hAB}
\;\;\; h=\sqrt{w_1w_2}, \;\; C={\eta_1\over \sqrt{w_1h}},\;\; D={\eta_2\over \sqrt{w_2 h}},\;\; E=\exp\left(\demi(w_1+w_2)+\eta_1+\eta_2\right),
\end{eqnarray}
changing to variable $x={\beta\over \sqrt{w_1w_2}} $, we obtain that
$$
\int_\sss \tilde\nu_\sss^{W,1,\theta}(d\beta)={E\over \sqrt{2\pi}} \int_1^\infty {dx\over \sqrt{x^2-1}}
\exp\left(-h\left(x+CD{x\over x^2-1}+\demi(C^2+D^2){1\over x^2-1}\right)\right).
$$
Writing 
\begin{eqnarray}\label{pol_P}
P(x)=x^3+(CD-1)x+\demi(C^2+D^2),
\end{eqnarray}
the previous integral is equal to
$$
{1\over \sqrt{2\pi}}\int_1^\infty {dx\over \sqrt{x^2-1}}\exp\left(-h{P(x)\over x^2-1}\right).
$$

Consider now the integral $\tilde\nu_{\aaa}^{W,1,\eta}(da)$. With the notation above, we have
\begin{eqnarray*}
\int_\aaa \tilde\nu_{\aaa}^{W,1,\eta}(da)&=&E\int_\R {da\over \sqrt{2\pi} } \exp\left( -\demi\left(w_1 e^{2a}+w_2 e^{-2a}\right)-\left(\eta_1 e^a+\eta_2 e^{-a}\right)\right)
\\
&=&
E\int_\R {da\over \sqrt{2\pi} } \exp\left( -h\left( \demi\left(\sqrt{{w_1\over w_2}} e^{2a}+\sqrt{{w_2\over w_1}} e^{-2a}\right)-\left({\eta_1\over h} e^a+{\eta_2\over h} e^{-a}\right)\right)\right).
\end{eqnarray*}
Changing to variable $\tilde a$ such that $\sqrt{{w_1\over w_2}} e^{2 a}=e^{2{\tilde a}}$, we have ${\eta_1\over h} e^a =C e^{\tilde a}$ and ${\eta_2\over h} e^{-a}=De^{-\tilde a}$, so that the previous integral equals
$$
\int_\R {d\tilde a\over \sqrt{2\pi} } \exp\left( -h\left( \demi\left( e^{2\tilde a}+e^{-2\tilde a}\right)-\left(C e^{\tilde a}+De^{-\tilde a}\right)\right)\right).
$$
Finally, changing to coordinate $x=e^{\tilde a}$, we obtain that
\begin{align}\label{int_Q}
\int_\aaa \tilde\nu_{\aaa}^{W,1,\eta}(da)={E\over \sqrt{2\pi}}\int_0^\infty \exp \left(-h{Q(x)\over 2 x^2}\right) {dx\over x},
\end{align}
where
\begin{eqnarray}\label{pol_Q}
Q(x)=x^4+2Cx^3+2Dx+1.
\end{eqnarray}
Hence the lemma is equivalent to the following equality of integrals
\begin{align}\label{integrales_Lagrange}
\int_1^\infty {dx\over \sqrt{x^2-1}}\exp\left(-h{P(x)\over x^2-1}\right)=\int_0^\infty \exp \left(-h{Q(x)\over 2 x^2}\right) {dx\over x}.
\end{align}
We note that the left hand side is the Laplace transform of the image of the measure $\indic_{x>1} {dx\over \sqrt{x^2-1}}$ by the function ${P(x)\over x^2-1}$, and the right hand side is the Laplace transform of the image of the measure $\indic_{x>0} {dx\over x}$ by the function ${Q(x)\over 2 x^2}$. The equality is equivalent to the fact that these measures are equal. Hence, the strategy is now to make the change of variable $u={P(x)\over x^2-1}$ or $u={Q(x)\over 2 x^2}$ in each of these equations. This leads to polynomial equations of degree 3 and 4. A difficulty is that theses change of variables are not bijective on their domain but 2 to 1. Remarkably, the  equation of degree 3 is the Lagrange resolvent of the equation of degree 4. This leads to relations between the roots of these two equations, which are at the heart of the argument. The details of the computation are given in next Section~\ref{Lagrange}.
\end{proof}
\subsection{Initialization of the induction, step 2: proof of equality \eqref{integrales_Lagrange} by Lagrange resolvent method}\label{Lagrange}
Let us state the equality as a self-contained lemma. The statements and arguments of this Section~\ref{Lagrange} are completely self-contained, and we freely give a different meaning to symbols already used in the rest of the paper. In particular, $u$, $a$, $\beta$ which appear below have nothing to do with the $u$, $a$ and $\beta$  in the rest of the manuscript.
\begin{lemma}\label{lem_Lagrange}
Let $C\ge 0$, $D\ge 0$ be two reals, and $P(x)$ and $Q(x)$ be the two polynomials
$$
P(x)=x^3+(CD-1)x+\demi(C^2+D^2), 
 \;\;\; Q(x)= x^4+2Cx^3+2Dx+1,
$$
Then
\begin{align}\label{integrales_Lagrange_app}
\int_0^\infty \exp \left(-h{Q(x)\over 2 x^2}\right) {dx\over x}= \int_1^\infty {dx\over \sqrt{x^2-1}}\exp\left(-h{P(x)\over x^2-1}\right).
\end{align}
for all $h>0$.
\end{lemma}
\begin{proof}
If $C=D=0$ the proof is simple:  ${Q(x)\over  2 x^2}=\demi(x^2+{1\over x^2})$, changing to variable $u$ such that $e^u=x^2$ we obtain that
$$
\int_0^\infty \exp \left(-h{Q(x)\over 2 x^2}\right) {dx\over x}=\demi \int_\R e^{-h\cosh(u)}du=  \int_0^\infty e^{-h\cosh(u)}du.
$$
(which is the modified Bessel function $K_0(h)$). On the other hand, ${P(x)\over x^2-1}=x$ and changing to variable $u$ such that $x=\cosh(u)$, we obtain 
$$
\int_1^\infty {dx\over \sqrt{x^2-1}}\exp\left(-h{P(x)\over x^2-1}\right)= \int_0^\infty e^{-h\cosh(u)}du.
$$

Let us now suppose that $C>0$ or $D>0$. 
As explained above, the strategy is to make the change of variables $u={Q(x)\over x^2}$ and $u={P(x)\over x^2-1}$ in each of the integrals, and to use that $P$ and $Q$ are related by the Lagrange resolvent method. 

Let us start by $Q$. The equation $u={Q(x)\over 2 x^2}$ is equivalent to $Q(x,u)=0$ with
\begin{eqnarray}\label{Qu}
Q(x,u)= x^4+2Cx^3-2ux^2+2Dx+1.
\end{eqnarray}
We have $Q(0,u)=1$ and $Q(+\infty,u)=+\infty$. Moreover, if $x>0$, then $Q(-x,u)\le Q(x,u)$ since $C\ge 0$, $D\ge 0$ and $Q(x,u)$ is decreasing in $u$. It implies that there exists $\underline u>0$ such that $Q(x,u)$ has two positive simple roots and two negative simple roots for $u>\underline u$,  no positive root for $u<\underline u$, and a double positive root for $u=\underline u$.  For $u>\underline u$ we denote by $a(u),b(u),c(u),d(u)$ (we often simply write $a,b,c,d$) the roots of the polynomial $x\to Q(x,u)$ and we choose the roots such that $0<a<b$, and $c<d<0$. Note that the infimum
$$
\underline u=\inf_{x>0} {Q(x)\over 2 x^2}
$$
 is reached at a unique value $\underline x>0$, and that $x\to {Q(x)\over 2 x^2}$ is bijective from the interval $(0,\underline x)$ (resp. $(\underline x,+\infty)$) onto $(\underline u,+\infty)$, with inverse $a(u)$ (respectively $b(u)$). Performing the change of variable $u={Q(x)\over 2 x^2}$ on each of these intervals, we obtain that
\begin{eqnarray}\label{L_chgt_var_1}
\int_0^\infty \exp \left(-h{Q(x)\over 2 x^2}\right) {dx\over x}=
\int_{\underline u}^\infty 
e^{-hu}\left( -{a'(u)\over a(u)}+{b'(u)\over b(u)}\right) du.
\end{eqnarray}

Let us now consider the polynomial $P$. The equation $u={P(x)\over x^2-1}$ is equivalent to $P(x,u)=0$, where
$$
P(x,u)= x^3-ux^2+(CD-1)x +\demi(C^2+D^2)+u.
$$
We remark that $P(1,u)=\demi(C+D)^2>0$, hence $P(x,u)$ has at least one root on $(-\infty,1)$. Besides, for $x\in (1,\infty)$, $P(x,u)$ is decreasing in $u$, hence there is $\underline u' >0$ such that $x\to P(x,u)$ has two simple roots (resp. no root) on $(1,\infty)$ for $u>\underline u'$ (resp. $u<\underline u'$) and hence a double root on $(1,\infty)$ for $u=\underline u'$. (We will show later that $\underline u'=\underline u$).

The key relation between $Q$ and $P$ is that, up to a simple change of variable, $P(x,u)$ is the Lagrange resolvent of $Q(x,u)$. Let us briefly recall what we need on Lagrange resolvents (cf \cite{Lagrange}, Section seconde, 30 and 31, or \cite{Cox} Section~12.C and \cite{Galuzzi} Section~3.1 for modern references): if 
$$
x^4+ mx^3+nx^2+px+q
$$
is a polynomial of degree 4, with roots $a,b,c,d$, its Lagrange resolvent is the polynomial of degree 3, given by
$$
y^3-ny^2+(mp-4q)y-(m^2-4n)q-p^2,
$$
which has roots 
$$
ab+cd,\;\; ac+bd,\;\; ad+bc.
$$

Coming back to our question, the Lagrange resolvent of $Q(x,u)$ is
$$
R(y)=y^3+2uy^2+y(4CD-4)-8(\demi(C^2+D^2)+u)=8P(-\demi y, u).
$$
In particular, it means that $P(x,u)$ has roots
$$
\gamma(u):=-\demi (ab+ cd), \;\; \alpha(u):=-\demi(ac+bd), \;\; \beta(u):= -\demi(ad+bc).
$$
We remark that for $u>\underline u$, $\gamma<0$ and $\beta>1$, $\alpha>1$ since $a>0$, $b>0$ and $c<0$, $d<0$ and $abcd=1$ (using the arithmetico-geometric inequality), so that $x\to P(x,u)$ has two roots on $(1,\infty)$. For $u=\underline u$, $a=b$ is a double root of $Q(x,u)$, hence $\alpha=\beta$ is double root of $P(x,u)$ on $(1,\infty)$. In particular it implies that $\underline u=\underline u'$. Moreover, for $u>\underline u$, we have $1<\alpha<\beta$: indeed, we have  $d<c<0<a<b$, hence  $(a-b)(d-c)=ad+bc-(bd+ac)>0$. Proceeding as for $Q$, for $u>\underline u$, we consider $\underline x'$ which is the unique point such that ${P(x,u)\over x^2-1}$ reaches its minimum on $(1,\infty)$ and make the change of variables $u={P(x,u)\over x^2-1}$ on $(1,\underline x')$ and on $(\underline x',+\infty)$. This leads to the following equality
\begin{align}\label{L_chgt_var_2}
\int_1^\infty {dx\over \sqrt{x^2-1}}\exp\left(-h{P(x)\over x^2-1}\right)
=
\int_{\underline u}^\infty e^{-hu}\left({\beta'(u)\over \sqrt{\beta(u)-1}}-{\alpha'(u)\over \sqrt{\alpha(u)-1}}\right).
\end{align}

In the last step, we use a key argument in the Lagrange method to solve the 4th degree polynomial equation: since $-2\beta=ac+bd$ and $abcd=1$, $ac$ and $bd$ are solutions of the equation of degree 2:
$$ t^2+2\beta t+1=0.
$$
Since $d<c<0<a<b$ we have that $bd<ac<0$, hence $ac$ is the solution $ac=-\beta+\sqrt{\beta^2-1}$, hence differentiating in $u$, we deduce
$$
ab({a'\over a}+{c'\over c})=-\beta'{-\beta+\sqrt{\beta^2-1}\over \sqrt{\beta^2-1}}, \hbox{ hence } \;\; {a'\over a}+{c'\over c}={ -\beta'\over \sqrt{\beta^2-1}}.
$$

Similarly, $ad$ and $bc$ are solutions of the equation $t^2+2\alpha t+1=0$. Moreover, we have $ad<bc<0$: indeed, since for $u>\underline u>0$ we have $Q(-x,u)<Q(u,x)$ for $x>0$, we deduce $Q(-b,u)<0$, $Q(-a,u)<0$ which gives $d<-b<-a<c$, hence $\vert ad\vert <\vert bc\vert$. Thus $bc$ is the root $bc=-\alpha+\sqrt{\alpha^2-1}$. Finally,
$$
{b'\over b}+{c'\over c}={ -\alpha'\over \sqrt{\alpha^2-1}}.
$$
Taking the difference of the two identities we have obtained, we can conclude that
$$
{a'\over a}-{b'\over b}={\alpha'\over \sqrt{\alpha^2-1}}-{ \beta'\over \sqrt{\beta^2-1}}.
$$
This concludes the proof with \eqref{L_chgt_var_1} and \eqref{L_chgt_var_2}.
\end{proof}

\subsection{Proof of Lemma~\ref{lem_identities}}
By hypothesis, $I$ is a non-empty subset such that $I^\star=I$ and $I\subsetneq V$, so that we can write 
$$
V=I\sqcup I^c,
$$
with $I^c$ non-empty and $(I^c)^\star=I^c$. 
The first part of the proof generalizes some computations done in the proof of lemma 4 in \cite{SZ19} or in section 4 \cite{LW17}. 
We can write \(H_{\beta}\) as the block matrix
 \begin{equation}
 \label{block}
 H_{\beta}=
    \begin{pmatrix}
   (H_\beta)_{I,I} & -W_{I,I^{c}} \\ -W_{I^{c},I} & (H_\beta)_{I^{c},I^{c}} 
    \end{pmatrix}
\end{equation}
and set
\begin{equation}
\label{Gbe}
\hat H_\beta=(H_\beta)_{I,I},\,\,
\hat G_\beta= \hat H_\beta^{-1},\,\,\check W= W_{I^c, I^c}+W_{I^c, I}\hat G_\beta W_{I,I^c},
\end{equation}
\begin{align}\label{Hcheck}
\check H_\beta= \be_{I^c}-\check W= (H_{\beta})_{I^c, I^c}-W_{I^c, I}\hat G_\beta W_{I,I^c}, \;\;\; \check G_\beta= \check H_\beta^{-1}.
\end{align}
Note that $\check H_\beta$ corresponds to the Schur complement of the matrix $H_\beta$ on the sub-block $I^c\times I^c$.  For this reason, classically, we have
$$
\check G_\beta=(G_\beta)_{I^c,I^c}.
$$
More precisely, with the previous notations we have
  \begin{equation}
    \label{eq-hbeta-productof3}
H_{\beta}=
    \begin{pmatrix}
      \Id_{I,I} & 0 \\ -W_{I^{c},I}\hat{G}_\beta & \Id_{I^c,I^c}
    \end{pmatrix}
    \begin{pmatrix}
      \hat H_\beta  & 0 \\ 0 & \check H_\beta
    \end{pmatrix}
    \begin{pmatrix}
      \Id_{I,I} & -\hat{G}_\beta W_{I,I^{c}} \\ 0 & \Id_{I^c,I^c}
    \end{pmatrix},
\end{equation}
and, subsequently,
 \begin{equation}
    \label{eq-schur-pour-gbeta}
    G_{\beta}=
               \begin{pmatrix}
                 \Id_{I,I} & \hat{G}_\beta W_{I,I^{c}} \\ 0 & \Id_{I^c,I^c}
               \end{pmatrix}                                                           \begin{pmatrix}
\hat{G}_\beta & 0\\ 0 & \check{G}_\beta                                                               \end{pmatrix}
                          \begin{pmatrix}
                            \Id_{I,I} & 0 \\ W_{I^{c},I}\hat{G}_\beta & \Id_{I^c,I^c}
                          \end{pmatrix}\\
  \end{equation}
 
 By \eqref{eq-hbeta-productof3}, we have
  \begin{equation}
    \label{eq-1hbeta1}
    \begin{aligned}
    &\left< \theta,H_{\beta}\theta \right>
    \\=&\left< \theta_{I^c} ,\check H_\beta \theta_{I^c} \right>+\left< \theta_{I},\hat H_{\beta}\theta_{I} \right>+
    \left< \theta_{I^c},W_{I^{c},I}\hat{G}_\beta W_{I,I^{c}} \theta_{I^c}\right>-2\left< \theta_{I},W_{I,I^{c}} \theta_{I^c}\right>
        \end{aligned}
 \end{equation}
On the other hand,
\[ \begin{pmatrix}
                            \Id_{I,I} & 0 \\ W_{I^{c},I}\hat{G}_\beta & \Id_{I^c,I^c}
                          \end{pmatrix}
                          \begin{pmatrix}
                            \eta_{I} \\ \eta_{I^{c}}
                          \end{pmatrix}=
                          \begin{pmatrix}
                            \eta_{I}\\ \check{\eta}_{I^c}
                          \end{pmatrix}
.\]

    We deduce from  \eqref{eq-schur-pour-gbeta} that
  \begin{equation}
    \label{eq-1gbeta1}
\left< \eta,G_{\beta}\eta \right>=\left< \eta_{I},\hat{G}_\beta \eta_{I} \right>+\left< \check \eta, \check{G}_\beta \check\eta \right>.
\end{equation}

Set $\hat\eta=\hat\eta_I$ and  $\check{\eta}=\check{\eta}_{I^c}$ for simplicity. Observe that $W$
and $\hat{G}_\beta$ are symmetric with respect to $\left<.,.\right>$, so that the adjoint of 
$$\begin{pmatrix}
      \Id_{I,I} & -\hat{G}_\beta W_{I,I^{c}} \\ 0 & \Id_{I^c,I^c}
    \end{pmatrix}$$
    with respect to that bilinear form is
$$    \begin{pmatrix}
      \Id_{I,I} & 0 \\ -W_{I^{c},I}\hat{G}_\beta & \Id_{I^c,I^c}
    \end{pmatrix}.$$
     
 Hence, using \eqref{eq-hbeta-productof3}, we deduce
 \begin{equation} 
 \label{adhelp}
 \left< \theta,H_{\beta}\theta \right>=\left<\zeta,D_H\zeta\right>,
 \end{equation}
 with 
 $$D_H=\begin{pmatrix}
      \hat H_\beta  & 0 \\ 0 & \check H_\beta
    \end{pmatrix},
   \,\,\zeta= \begin{pmatrix}
      \Id_{I,I} & -\hat{G}_\beta W_{I,I^{c}} \\ 0 & \Id_{I^c,I^c}
    \end{pmatrix} \tet.
    $$
    
Combining \eqref{eq-1hbeta1}, \eqref{adhelp} and \eqref{eq-1gbeta1}, and noting that $\left<\eta,\tet\right>=\left<\hat\eta,\tet_I\right>+\left<\check{\eta},\tet_{I^c}\right>$, we deduce
\begin{align}\label{term_exp}
 \left< \theta,H_{\beta}\theta \right>+\left< \eta,G_{\beta}\eta \right>-2\left<\eta,\theta\right>
  =&\left< \theta_{I^c},\check{H}_\beta \theta_{I^c} \right>+ \left< \check{\eta},\check{G}_\beta \check{\eta}  \right>-2\left<\check\eta,\theta_{I^c}\right>\\
\nonumber   &+\left< \theta_{I},\hat H_\beta\theta_{I} \right>+\left< \hat\eta,\hat{G}_\beta \hat\eta \right>-2\left<\hat\eta,\theta_{I}\right>
\end{align}

Note that, by  \eqref{eq-hbeta-productof3}, 
  \begin{align}\label{prod_det}
  \det H_{\beta}=\det \hat H_\beta \det \check{H}_\beta,\ \ \mathds{1}_{H_{\beta}>0}=\mathds{1}_{\hat H_{\beta}>0}\mathds{1}_{\check{H}_{\beta}>0}.
  \end{align}
Combining \eqref{term_exp} and \eqref{prod_det}, we deduce
  \begin{eqnarray*}
\nu_{\sss}^{W,\theta,\eta}(d\beta)&=& {\prod_{i\in V_0} \theta_i\over \sqrt{2\pi}^{\vert\sss\vert}} e^{-\frac{1}{2}\left< \theta,H_{\beta}\theta \right>-\frac{1}{2}\left< \eta,G_{\beta}\eta \right>+\left<\eta,\theta\right>}\frac{\mathds{1}_{H_{\beta}>0}}{\sqrt{\det H_{\beta}}}d\beta
    \\
    &=&{\prod_{i\in V_0\cap I} \theta_i\over \sqrt{2\pi}^{\vert\sss_I\vert}} e^{-\frac{1}{2} \left< \theta_I,\hat H_{\beta}\theta_I \right>-\demi\left<\hat \eta,\hat{G}_\beta\hat\eta \right>+\left<\hat\eta,\theta_{I} \right>  }\frac{\mathds{1}_{\hat H_{\beta}>0}}{\sqrt{\det \hat H_{\beta}}}d\beta_I\\
    &&\;\;\;\;\cdot {\prod_{i\in V_0\cap I^c} \theta_i\over \sqrt{2\pi}^{\vert\sss_{I^c}\vert}}  e^{-\frac{1}{2}\left< \theta_{I^c},\check{H}\theta_{I^c} \right>
   -\demi \left< \check{\eta},\check{G}_{\beta} \check{\eta}  \right>+\left<\check\eta,\theta_{I^c}\right>}\frac{\mathds{1}_{ \check{H}_{\beta}>0}}{\sqrt{\det \check{H}_{\beta}}}d\beta_{I^c}
   \\
   &=&
\nu_{\sss_I}^{W_{I,I},\theta_I,\eta_I}(d\beta_I) \nu_{\sss_{I^c}}^{\check W_{I^c,I^c},\theta_{I^c},\check\eta_{I^c}}(d\beta_{I^c}).
  \end{eqnarray*}
Hence, this proves the first identity of Lemma~\ref{lem_identities}. 

 Next we prove the third identity of Lemma~\ref{lem_identities}. Using that $\check W_{I^c,I^c}=W_{I^c,I^c}+W_{I^c,I}\hat G_\beta W_{I,I^c}$, and that $\check \eta_{I^c}=\eta_{I^c} +W_{I^c,I}\hat G_\beta \theta_{I^c}$, and using the $\left<\cdot,\cdot \right>$-symmetry of the matrix $W$, we deduce:
\begin{align*}
& 
-\demi\left<\theta^{a}_{I^c}, \check W \theta^{a}_{I^c}\right>
+\demi\left<\theta_{I^c},\check W \theta_{I^c}\right>
-\left<\check\eta,(\theta^{a}_{I^c}-\theta_{I^c})\right>
-\demi\left<\hat \eta,\hat{G}_\beta\hat\eta \right>
\\
=& -\demi \left<\theta^{a}_{I^c}, W_{I^c,I^c} \theta^a_{I^c}\right>+\demi \left<\theta_{I^c},W_{I^c,I^c} \theta_{I^c}\right>-
\left<\eta_{I^c},\theta^a_{I^c}-\theta_{I^c}\right>
\\
&
-\demi \left<W_{I,I^c} \theta^{a}_{I^c},\hat G_\beta W_{I,I^c}\theta^{a}_{I^c}\right>+\demi \left<W_{I,I^c} \theta_{I^c},\hat G_\beta W_{I,I^c}\theta_{I^c}\right>
\\
&
-\left<\eta_I,\hat G_\beta W_{I, I^c} \left(\theta_{I^c}^a-\theta_{I^c}\right)\right>-\demi\left<\hat \eta,\hat{G}_\beta\hat\eta \right>.
\end{align*}
Remark now that, using the definitions of $\hat \eta$ and $\hat \eta^a$,
$$
\left<\hat \eta,\hat{G}_\beta\hat\eta \right>=\left<\hat \eta^a,\hat{G}_\beta\hat\eta^a \right>+ \left<W_{I,I^c} \theta_{I^c},\hat G_\beta W_{I,I^c}\theta_{I^c}\right>+2 \left<\eta_{I^c},\theta_{I^c}-\theta^a_{I^c}\right>-\left<W_{I,I^c} \theta^{a}_{I^c},\hat G_\beta W_{I,I^c}\theta^{a}_{I^c}\right>.
$$
Combined with the previous equality, it gives
\begin{align*}
&
-\demi\left<\theta^{a}_{I^c}, \check W \theta^{a}_{I^c}\right>
+\demi\left<\theta_{I^c},\check W \theta_{I^c}\right>
-\left<\check\eta,(\theta^{a}_{I^c}-\theta_{I^c})\right>
-\demi\left<\hat \eta,\hat{G}_\beta\hat\eta \right>
\\
=&
 -\demi \left<\theta^{a}_{I^c}, W_{I^c,I^c} \theta^a_{I^c}\right>+\demi \left<\theta_{I^c},W_{I^c,I^c} \theta_{I^c}\right>-
\left<\eta_{I^c},(\theta^{a}_{I^c}-\theta_{I^c})\right>-\demi \left<\hat{\eta}_I^{a},\hat G_\beta \hat{\eta}_I^{a}\right>.
\end{align*}
Coming back to the definitions, this yields
$$
\nu_{\sss_I}^{W_{I,I},\theta_I,\hat \eta_I}(d\beta_I) \nu_{\aaa_{I^c}}^{\check W_{I^c,I^c},\theta_{I^c},\check \eta_{I^c}}(da_{I^c})=
\nu_{\sss_I}^{W_{I,I},\theta_I,\hat \eta^a_I}(d\beta_I) \nu_{\aaa_{I^c}}^{W_{I^c,I^c},\theta_{I^c},\hat \eta_{I^c}}(da_{I^c})
=\mathcal{Q}_I^{W,\tet,\eta}(d\beta_{I},da_{I^c}),
$$ 
where the last equality comes from the definition.


The last step is to prove the second equality of Lemma~\ref{lem_identities}, i.e. that
$$
\nu_{\aaa_I}^{W_{I,I},\theta_I,\hat \eta^a_I}(da_I) \nu_{\aaa_{I^c}}^{W_{I^c,I^c},\theta_{I^c},\hat \eta_{I^c}}(da_{I^c})  
=\nu_{\aaa}^{W,\theta,\eta}(da).
$$
This comes from the fact that 
\begin{align*}
&
-\left<\hat{\eta}_I^{a}, \theta_I^a-\theta_I\right>=  -\left<{\eta}_I, \theta_I^a-\theta_I\right> -\left<W_{I,I^c}{\theta}_{I^c}^{a}, \theta_I^a-\theta_I\right>
\\
&
-
\left<\hat \eta_{I^c},\theta^a_{I^c}-\theta_{I^c}\right>
=-
\left<\eta_{I^c},\theta^a_{I^c}-\theta_{I^c}\right>-
\left<W_{I^c,I} \theta_{I},\theta^a_{I^c}-\theta_{I^c}\right>,
\end{align*}
hence
  \begin{align*}
  &\nu_{\aaa_I}^{W_{I,I},\theta_I,\hat \eta^a_I}(da_I) \nu_{\aaa_{I^c}}^{W_{I^c,I^c},\theta_{I^c},\hat \eta_{I^c}}(da_{I^c}) 
\\
=&e^{ -\demi\left<\tet^a_I, W_{I,I}\theta_I^{a}\right> +\demi \left<\theta_I, W_{I,I}\theta_{I}\right> -\left<\hat{\eta}_I^{a}, \theta_I^a-\theta_I\right>} 
e^{ -\demi \left<\theta^{a}_{I^c}, W_{I^c,I^c} \theta^a_{I^c}\right>+\demi \left<\theta_{I^c},W_{I^c,I^c} \theta_{I^c}\right>-
\left<\hat \eta_{I^c},\theta^a_{I^c}-\theta_{I^c}\right>} da
\\
=&
 e^{-\demi\left<\theta^{a}, W\theta^a\right>+\demi\left<\theta,W\theta\right>-\left<\eta,\theta^{a}\right>+\left<\eta,\theta\right>} da= \nu_{\aaa}^{W,\theta,\eta}(da),
\end{align*}
which concludes the proof of Lemma~\ref{lem_identities}.
\subsection{Proof of Theorem~\ref{GThm_beta} and Proposition~\ref{conditionning}}
The proof works by induction on $\dim(\sss)=\vert V_0\sqcup V_1\vert$. The initialization $\dim(\sss)=1$ has already been done. Assume that the statement is true for $\dim(\sss)\le n$, and consider a graph $\ggg$ such that $\dim(\sss)=n+1$. Consider $I\subsetneq V$ as in Lemma~\ref{lem_identities} so that $\dim(\sss_I)\le n$ and $\dim(\sss_{I^c})\le n$.
From the first equality in Lemma~\ref{lem_identities}, we deduce
$$\nu_\sss^{W,\theta,\eta}(d\beta)=\nu_{\sss_I}^{W_{I,I},\theta_I,\eta_I}(d\beta_I) \nu_{\sss_{I^c}}^{\check W_{I^c,I^c},\theta_{I^c},\check\eta_{I^c}}(d\beta_{I^c}).$$
Note that, by definition, the parameters $\check W_{I^c,I^c}$ and $\check\eta_{I^c}$ only depend on $\beta_I$ and not on $\beta_{I^c}$. We now integrate on $\beta_{I^c}$, while leaving $\beta_I$ fixed, hence considering $\check W_{I^c,I^c}$ and $\check\eta_{I^c}$ as fixed parameters. Applying the induction formula on the subgraph with vertices $I^c$,
we obtain
 \begin{align*}
&
\int_{\sss_{I^c} } 
\nu_{\sss_{I^c}}^{\check W_{I^c,I^c},\theta_{I^c},\check\eta_{I^c}}(d\beta_{I^c})  
=
\int_{\aaa_{I^c}} \nu_{\aaa_{I^c}}^{\check W_{I^c,I^c},\theta_{I^c},\check\eta_{I^c}}(da_{I^c}).  
  \end{align*}
Integrating now on $d\beta_I$, and by definition of $\mathcal{Q}_I^{W,\tet,\eta}(d\beta_{I},da_{I^c})$ (see Lemma~\ref{lem_identities}), we obtain
$$
\int_{\sss} \nu_{\sss}^{W,\theta,\eta}(d\beta)=\int_{\sss_{I}\times \aaa_{I^c}} \mathcal{Q}_I^{W,\tet,\eta}(d\beta_{I},da_{I^c}),
$$
which proves the first part of the equality of Theorem~\ref{GThm_beta}. Remark that, by the same argument, we also prove the first part of Proposition~\ref{conditionning}~ii).

By the third equality of Lemma~\ref{lem_identities}, using the other expression of $\mathcal{Q}_I^{W,\tet,\eta}$, we also have
$$
\mathcal{Q}_I^{W,\tet,\eta}(d\beta_{I},da_{I^c})
=
\nu_{\sss_I}^{W_{I,I},\theta_I,\hat \eta^a_I}(d\beta_I) \nu_{\aaa_{I^c}}^{W_{I^c,I^c},\theta_{I^c},\hat \eta_{I^c}}(da_{I^c}).
$$
Remark that, on the right-hand side in $\nu_{\aaa_{I^c}}^{W_{I^c,I^c},\theta_{I^c},\hat \eta_{I^c}}(da_{I^c})$, the parameters do not depend on $\beta_I$, while in $\hat\eta^a_I$ only depends on $a_{I^c}$. Hence, applying the recurrence hypothesis on the subgraph with vertices $I$, conditioned on $a_{I^c}$, we deduce
$$
\int_{\sss_I} \nu_{\sss_I}^{W_{I,I},\theta_I,\hat \eta^a_I}(d\beta_I) = \int_{\aaa_I} \nu_{\aaa_I}^{W_{I,I},\theta_I,\hat \eta^a_I}(da_I).
$$
Integrating now on $a_{I^c}$, and using the second equality of Lemma~\ref{lem_identities}, this implies
$$
\int_{\sss_{I}\times \aaa_{I^c}} \mathcal{Q}_I^{W,\tet,\eta}(d\beta_{I},da_{I^c})=
\int_{\aaa} \nu_{\aaa_I}^{W_{I,I},\theta_I,\hat \eta^a_I}(da_I) \nu_{\aaa_{I^c}}^{W_{I^c,I^c},\theta_{I^c},\hat \eta_{I^c}}(da_{I^c})= \int_{\aaa}  \nu_\aaa^{W,\theta,\eta}(da).
$$
This proves the second part of the equality of Theorem~\ref{GThm_beta} and, by a similar argument, the second part of Proposition~\ref{conditionning}~ii).

The conditioning properties, Proposition~\ref{conditionning}~i), are direct consequences of each of the identities of Lemma~\ref{lem_identities}.

\subsection{Proof of the integrability condition of Theorem~\ref{Thm_beta}}
Under the assumption of  Theorem~\ref{Thm_beta}, we will prove that $\int_\aaa \nu_{\aaa}^{W,\theta,\eta}(da) <\infty$, which is equivalent to $F^{W,\theta,\eta} <\infty$ by \eqref{eq:beta} and definition. Changing $W_{i,j}$ into $\tilde W_{i,j}=\theta_{i}^*\theta_j W_{i,j}$ and $\eta_i$ into $\tilde \eta_i=\theta_i^\star\eta_i$, we can always assume that $\theta_i=1$ for all $i\in V$, which is what we do in the sequel. Set $w:=\inf_{(i,j)\in E} W_{i,j} >0$. For $i\in V$, let $\aaa_i=\{a\in \aaa, \; a_i\ge \max_{j\in V} \vert a_j\vert\}$, we have 
$$
\int_\aaa \nu_{\aaa}^{W,1,\eta}(da) \le \sum_{i\in V} \int_{\aaa_i} \nu_{\aaa}^{W,1,\eta}(da).
$$
Fix $i\in V$. If there exists a directed path from $i$ to $i^\star$, then the proof is essentially the same as the proof of Part~\ref{partI}, Lemma~\ref{finitness}. 

Suppose now that the other assumption is satisfied for $i$, i.e. that there exists $j\in V$ such that there is a directed path from $i$ to $j$ and $\eta_{j}>0$. Let $\sigma$ be a shortest path from $i$ to $j$, then there exists $k$ such that $\sigma_k-\sigma_{k+1}\ge {a_i-a_j\over \vert \sigma \vert}$. Hence
$$
\nu_{\aaa}^{W,1,\eta}(da)\le C_{W,\eta} \exp\left(-\demi \sum_{i\to j} W_{i,j} e^{a_i-a_j}- \eta_{j} e^{a_j} \right) da \le \exp\left(-\demi w e^{a_i-a_j\over \vert \sigma\vert} - \eta_{j} e^{a_j}\right) da.
$$
We easily obtain $\max({a_i-a_j\over \vert \sigma\vert}, a_j)\ge {a_i\over \vert \sigma\vert+1}$, hence, by the same argument as in the proof of Part~\ref{partI}, Lemma~\ref{finitness},
$$
\int_{\aaa_i} \nu_{\aaa}^{W,1,\eta}(da)\le C_{W,\eta}\int_0^\infty \vert 2a_i \vert^{\vert V_1\vert}  \exp\left(- \min(\demi w,\eta_j) e^{a_i\over \vert \sigma\vert+1}\right) da_i,
$$
which is finite. This concludes the proof.
\section{Proof of Theorem~\ref{mixing-Ib}}

The strategy of the proof of Theorem~\ref{mixing-Ib} goes as follows: we pick $(\bbb_I,A_{I^c})$ according to the law ${1\over F^W_{i_0}}\qqq^W_{I,i_0}$ and run the process defined in Theorem~\ref{mixing-Ib}~ii). Then we prove that the law of this process is that of the randomized \sVRJP $(X_t)$, i.e. $\overline \P_{i_0}^W$. To complete the proof, we show that the random
variables $(\bbb_I,A_{I^c})$ appear asymptotically as functions of the path of the process and coincide with the corresponding values defined in Theorem~\ref{mixing-Ib}~i).

Let us fix some notation: we denote by $\tilde \P_{I,i_0}^W$ the joint law $\left((X_t),  (\bbb_I, A_{I^c})\right)$ where $(\bbb_I,A_{I^c})$ is distributed according to the law ${1\over F^W_{i_0}}\qqq^W_{I,i_0}$ and conditionally on $(\bbb_I,A_{I^c})$, the process $(X_t)$ is defined in Theorem~\ref{mixing-Ib}~ii). By abuse of notation, we sometimes consider $\tilde \P_{I,i_0}^W$ as the law of its marginal $(X_t)$.

By definition, under  $\tilde \P_{I,i_0}^W$ and
conditioned on $(\bbb_I,A_{I^c})$, the jump rate of $X(t)$ at time $t$ is
$$
\sum_{j, X_u\to j} W_{X_u,j} e^{T_{X_u}(u)+T_{X_u^\star}(u)}e^{V_{j^\star}(u)-V_{X_u^\star}(u)}.
$$
By a classical computation, under $\tilde \P_{I,i_0}^W$  and conditioned on $(\bbb_I,A_{I^c})$, the probability that the canonical process $X$ at time $t$ has performed $n$ jumps in infinitesimal time intervals $[t_i, t_i+dt_i)$, $0<t_1<\cdots <t_n<t$, following the trajectory
$\sigma_{0}=i_0, \sigma_1, \ldots, \sigma_n=j_0$ is equal to
\begin{eqnarray} \label{proba_traject-2}
&&\exp\left(-\int_0^t \sum_{j, X_u\to j} W_{X_u,j} e^{T_{X_u}(u)+T_{X_u^\star}(u)}e^{V_{j^\star}(u)-V_{X_u^\star}(u)}du\right)
\\
\nonumber
&&\cdot \left(\prod_{l=1}^{n} W_{\sigma_{l-1}, \sigma_l} e^{T_{\sigma_{l-1}}(t_l)+T_{\sigma_{l-1}^\star}(t_l)} e^{V_{\sigma_l^\star}(t_l)-V_{\sigma_{l-1}^\star}(t_l)} dt_l\right).
\end{eqnarray}
Due to several simplifications detailed just below we have
$$
\prod_{l=1}^{n}  e^{V_{\sigma_l^\star}(t_l)-V_{\sigma_{l-1}^\star}(t_l)}
= e^{V_{j_0^\star}(t)-V_{i_0^\star}(0)-\sum_{i\in V_0\cap I} T_i(t)}.
$$
Indeed:
\begin{itemize}
\item
We know that $V(t)$ is constant when $X(t)$ is in $I$. Hence when $\sigma_l$ and $\sigma_l^\star$ are in $I$, the term $e^{V_{\sigma_l^\star}(t_l)}$ cancels with the next term $e^{-V_{\sigma_l^\star}(t_{l+1})}$ since $V_{\sigma_l^\star}$ is constant on the interval $[t_l, t_{l+1}]$. 
\item
When $\sigma_l$ and $\sigma_l^\star$ are in $I^c$, we have $V_{\sigma_l^\star}(t_l)=T_{\sigma_l^\star}(t_l)+A_{l^\star}$ and we have a similar simplification as in the proof of Part~\ref{partI} Proposition~\ref{exchangeability}~equation~\eqref{density-*}. More precisely,
\begin{itemize}
\item
If $\sigma_l\neq\sigma_l^\star$, the local time $T_{\sigma_l^\star}$ does not change between the time $s_k$ at which the process jumps to $\sigma_l$ and the time $s_{l+1}$ at which it leaves $\sigma_l$. Hence, $T_{\sigma_l^\star}(s_l)$ cancels with $T_{\sigma_l^\star}(s_{l+1})$.
\item
If $\sigma_l=\sigma_l^\star$, the local time $T_{\sigma_l^\star}$ does not change between the time $s_{l+1}$ at which the process leaves $\sigma_l$ and the first time after $s_{l+1}$ at which it comes back to $\sigma_l$. Hence, there is a cancellation at each return time to a self-dual point. Since the initial local time is 0, it leaves the contribution $-\sum_{i\in V_0\cap I} T_i(t)$ in the formula above.
\end{itemize} 
\end{itemize}
Hence, integrating on the distribution ${1\over F^W_{i_0}}\qqq^W_{I,i_0}$ of the random variables $(\bbb_I,A_{I^c})$, we see that under $\tilde \P_{I,i_0}^W$, the probability that the process $X$ at time $t$ has performed $n$ jumps in infinitesimal time intervals $[t_i, t_i+dt_i)$, $0<t_1<\cdots <t_n<t$, following the trajectory
$\sigma_{0}=i_0, \sigma_1, \ldots, \sigma_n=j_0$ is equal to
\begin{align}\label{proba-traject-2-integree}
&\;\;\;\;\;\;\;\;
{1\over F^W_{i_0}}
\left(\prod_{l=1}^{n} W_{\sigma_{l-1}, \sigma_l} e^{T_{\sigma_{l-1}}(t_l)+T_{\sigma_{l-1}^\star}(t_l)} dt_l\right)e^{-\sum_{i\in V_0\cap I^c} T_i(t)}
\\
\nonumber
&
\!\!\!\!\!\!\!\cdot\int_{\sss_I\times\aaa_{I^c}}
e^{V_{j_0^\star}(t)-V_{i_0^\star}(0)-\int_0^t \sum_{j, X_u\to j} W_{X_u,j} e^{T_{X_u}(u)+T_{X_u^\star}(u)}e^{V_{j^\star}(u)-V_{X_u^\star}(u)}du}
\;\qqq^W_{I,i_0}(d\beta_I,da_{I^c}).
\end{align}
where in the previous formula, $(V_i(u))$ is constructed from the variables $(\beta_I, a_{I^c})$ instead of $(\bbb_I,A_{I^c})$, i.e. we have
$$
\begin{cases}
V_i(t)=T_i(t)+a_i, &\hbox {if $i\in I^c$},
\\
(H_{\beta}(e^{V^\star(t)}))_{I}=0. \;\;\; 
\end{cases}
$$


Let $$\theta (t)= e^{T^\star(t)}.$$ Then \eqref{theta_A} and the definition of $V(t)$ imply that 
\begin{equation}
\label{trid}
\theta^a_{I^c}(t)= (e^{T^\star(t)+a^\star})_{I^c}=(e^{V^\star(t)})_{I^c}.
\end{equation}
 Also,
\begin{eqnarray*}
&&
{\partial \over \partial t} \left(
\demi\left<\theta_I(t), \beta_I \theta_I(t)\right> +\demi \left<\theta^{a}_{I^c}(t),W_{I^c,I}\hat G_\beta W_{I,I^c} \theta^{a}_{I^c}(t)\right> +\demi \left<\theta^{a}_{I^c}(t), W_{I^c,I^c}\theta^a_{I^c}(t)\right>
\right)
\\
&
=&
\sum_{j, X_t\to j} W_{X_t,j} e^{T_{X_t}(t)+T_{X_t^\star}(t)}e^{V_{j^\star}(t)-V_{X_t^\star}(t)}.
\end{eqnarray*}
Indeed, we have
\begin{eqnarray*}
{\partial \over \partial t} 
\demi\left<\theta_I(t), \beta_I \theta_I(t)\right>
=\indic_{X_t\in I} e^{T_{X_t}(t)+T_{X_t}^\star(t)}\beta_{X_t}=\indic_{X_t\in I} e^{T_{X_t}(t)+T_{X_t}^\star(t)}\sum_{j, X_t\to j} W_{X_t,j} e^{V_{j^\star}(t)-V_{X_t^\star}(t)}
\end{eqnarray*}
since, for $i\in I$, $\beta_i=\sum_{j,i\to j} W_{i,j}e^{V_{j^\star}(t)-V_{i^\star}(t)}$, for all time $t\ge 0$. Besides, 
\begin{eqnarray*}
{\partial \over \partial t} 
\demi \left<\theta^{a}_{I^c}(t),W_{I^c,I}\hat G_\beta W_{I,I^c} \theta^{a}_{I^c}(t)\right>
&=&
\indic_{X_t\in I^c} \theta_{X^\star_t}^{a}(t) \sum_{j\in I, X_t\to j} W_{X_t,j} \left( \hat G_\beta W_{I,I^c} \theta^{a}_{I^c}(t)\right)_{j}
\\
&=&
\indic_{X_t\in I^c} e^{T_{X_t}(t)+T_{X^\star_t}(t)} \sum_{j\in I, X_t\to j} W_{X_t,j} e^{V_{j^\star}(t)-V_{X_t^\star}(t)}
\end{eqnarray*}
since by the definition of $V(t)$ in Theorem~\ref{mixing-Ib}~ii) we have, for $i\in I^c$ and $j\in I$,
\begin{equation*}
\theta_{i^\star}^{a}(t)=e^{V_{i}(t)}= e^{T_{i}(t)}e^{a_i}= e^{T_{i}(t)+T_{i^\star}(t)} e^{-V_{i^\star}(t)}
\end{equation*}
and, using  $H_\be(e^{V^\star(t)})_I=0$, or equivalently $W_{II^c}(e^{V^\star(t)})_{I^c}=\hat H_\beta(e^{V^\star(t)})_{I}$,  
\begin{equation}
\label{eq:pot}
\left( \hat G_\beta W_{I,I^c} \theta^{a}_{I^c}(t) \right)_{j}=\left( \hat G_\beta W_{I,I^c}(e^{V^\star(t)})_{I^c} \right)_{j}=e^{V_{j^\star}(t)}.
\end{equation}
Finally, for the last term we have
\begin{eqnarray*}
{\partial \over \partial t} 
\demi \left<\theta^{a}_{I^c}(t), W_{I^c,I^c}\theta^a_{I^c}(t)\right>
&=&
\indic_{X_t\in I^c} \theta_{X_t^\star}^{a}(t) \sum_{j\in I^c, X_t\to j} W_{X_t,j} \theta^a_{j}(t)
\\
&=&
\indic_{X_t\in I^c} e^{T_{X_t}(t)+T_{X^\star_t}(t)} \sum_{j\in I^c, X_t\to j} W_{X_t,j} e^{V_{j^\star}(t)-V_{X_t^\star}(t)},
\end{eqnarray*}
using \eqref{trid}.

Hence, recalling notation \eqref{W_check},
$
\check W_{I^c,I^c}= W_{I^c,I^c}+ W_{I^c,I}\hat G_\beta W_{I,I^c},
$
we have
\begin{eqnarray*}
&&\int_0^t \sum_{j, X_u\to j} W_{X_u,j} e^{T_{X_u}(u)+T_{X_u^\star}(u)}e^{V_{j^\star}(u)-V_{X_u^\star}(u)}du
\\
&=&\demi\left(\left<\theta_I(t), \beta_I \theta_I(t)\right> + \left<\theta^a_{I^c}(t), \check W_{I^c,I^c}\theta^a_{I^c}(t)\right>
-
\left<1_I, \beta_I 1_I\right>  - \left<e^{a}_{I^c}, \check W_{I^c,I^c}e^a_{I^c}\right>
\right).
\end{eqnarray*}
From the definition of $\qqq^{W,\theta,0}_{I}(d\beta_I,da_{I^c})$, for $\theta\in (0,\iy)^V$, $\eta=0$, we see that
\begin{eqnarray*}\label{FQ-tildeF}
&&e^{-\demi\left<\theta, W\theta\right>} \qqq^{W,\theta,0}_{I}(d\beta_I,da_{I^c})
\\
&=&
\frac{\indic_{\hat H_\beta >0}}{\sqrt{2\pi}^{\vert\aaa_{I^c}\vert+\vert\sss_I\vert}} 
\frac{\prod_{i\in V_0\cap I}\theta_i}{\sqrt{\det \hat H_{\beta}}} 
e^{-\demi\left<\theta_I, \beta_I \theta_I\right> -\demi \left<\theta^{a}_{I^c},W_{I^c,I}\hat G_\beta W_{I,I^c} \theta^{a}_{I^c}\right> -\demi \left<\theta^{a}_{I^c}, W_{I^c,I^c}\theta^a_{I^c}\right>
}
da_{I^c} d\beta_I
\\
&=&
\frac{\indic_{\hat H_\beta >0}}{\sqrt{2\pi}^{\vert\aaa_{I^c}\vert+\vert\sss_I\vert}} 
\frac{\prod_{i\in V_0\cap I}\theta_i}{\sqrt{\det \hat H_{\beta}}} 
e^{-\demi\left<\theta_I, \beta_I \theta_I\right> -\demi \left<\theta^{a}_{I^c},\check W_{I^c,I} \theta^{a}_{I^c}\right>
}
da_{I^c} d\beta_I.
\end{eqnarray*}
Hence 
\begin{align}
\label{exp_int}
&\exp\left({-\int_0^t \sum_{j, X_u\to j} W_{X_u,j} e^{T_{X_u}(u)+T_{X_u^\star}(u)}e^{V_{j^\star}(u)-V_{X_u^\star}(u)}du}\right)
e^{-\demi\left<1, W 1\right>}\qqq^W_{I}(d\beta_I,da_{I^c})
\\
\nonumber
&=
{1\over \prod_{i\in V_0\cap I} \theta_i(t)}e^{-\demi\left<\theta(t), W\theta(t)\right>}\qqq^{W,\theta(t)}_{I}(d\beta_I,da_{I^c}).
\end{align}
Besides, we have the following simple proposition.
\begin{proposition}
\label{proptrid}
For all $t \ge 0$,
$$
\qqq^{W,\theta(t)}_{I,i_0}(d\beta_I,da_{I^c})=e^{V_{i_0^\star}(t)}\qqq^{W,\theta(t)}_{I}(d\beta_I,da_{I^c})
$$
\end{proposition}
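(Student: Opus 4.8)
The plan is to reduce the statement to a pointwise identity between two scalar functions, using only the definitions and the two boundary conditions that characterise the field $V(t)$. By Definition-Proposition~\ref{def_nu_i0} applied with $\theta=\theta(t):=e^{T^*(t)}$ and $\eta=0$, the measure $\qqq^{W,\theta(t)}_{I,i_0}$ is obtained from $\qqq^{W,\theta(t)}_{I}$ by multiplication by a scalar function $\rho$ of $(\beta_I,a_{I^c})$:
$$
\qqq^{W,\theta(t)}_{I,i_0}(d\beta_I,da_{I^c})=\rho\cdot\qqq^{W,\theta(t)}_{I}(d\beta_I,da_{I^c}),
\qquad
\rho=\begin{cases}(\hat G_\beta\hat\eta^a)_{i_0}&\text{if }i_0\in I,\\ \theta^a_{i_0}(t)&\text{if }i_0\in I^c,\end{cases}
$$
where, since $\eta=0$, the vector $\hat\eta^a$ of~\eqref{eta_hat} reduces to $\hat\eta^a=W_{I,I^c}\theta^a_{I^c}(t)$. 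So it suffices to prove that $\rho=e^{V_{i_0^*}(t)}$ identically on $\sss_I\times\aaa_{I^c}$, where $V(t)=V(t;\beta_I,a_{I^c})$ is the field from Theorem~\ref{mixing-I}~ii).

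First I would rewrite the defining relations of $V(t)$ in this notation. For $j\in I^c$ one has $j^*\in I^c$ (as $I^*=I$), so $V_{j^*}(t)=T_{j^*}(t)+a_{j^*}$ by construction, whence $(e^{V^*(t)})_j=e^{T_{j^*}(t)}e^{a_{j^*}}=\theta_j(t)e^{a^*_j}=\theta^a_j(t)$; that is, $(e^{V^*(t)})_{I^c}=\theta^a_{I^c}(t)$. For $i\in I$, writing $H_\beta=\beta-W$ in blocks so that $(H_\beta)_{I,I}=\hat H_\beta$ and $(H_\beta)_{I,I^c}=-W_{I,I^c}$, the equation $(H_\beta\,e^{V^*(t)})_I=0$ becomes $\hat H_\beta(e^{V^*(t)})_I=W_{I,I^c}(e^{V^*(t)})_{I^c}=W_{I,I^c}\theta^a_{I^c}(t)=\hat\eta^a$, hence $(e^{V^*(t)})_I=\hat G_\beta\hat\eta^a$. (Only $\beta_I$ enters, in accordance with the N.B. following Theorem~\ref{mixing-I}.)

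The conclusion is then immediate in both cases. If $i_0\in I^c$, then $\rho=\theta^a_{i_0}(t)=(e^{V^*(t)})_{i_0}=e^{V_{i_0^*}(t)}$; if $i_0\in I$, then $\rho=(\hat G_\beta\hat\eta^a)_{i_0}=\big((e^{V^*(t)})_I\big)_{i_0}=e^{V_{i_0^*}(t)}$. In either case $\rho=e^{V_{i_0^*}(t)}$, which proves the proposition. I do not expect a genuine obstacle here: the whole content is a one-line linear-algebra manipulation, and the only thing to keep straight is the set of conventions $\theta^a=e^{a^*}\theta$, $\hat\eta^a=\eta_I+W_{I,I^c}\theta^a_{I^c}$, $\hat H_\beta=(H_\beta)_{I,I}$, together with the fact that the Green's-function factor attached to the root over $I$ is exactly the interior value $(e^{V^*(t)})_I$ of the discrete harmonic extension of the boundary data $\theta^a_{I^c}(t)$.
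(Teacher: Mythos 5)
Your proof is correct and follows essentially the same route as the paper: identify $(e^{V^*(t)})_{I^c}=\theta^a_{I^c}(t)$ from the boundary condition, solve $(H_\beta e^{V^*(t)})_I=0$ to get $(e^{V^*(t)})_I=\hat G_\beta\hat\eta^a_I(t)$, and invoke Definition-Proposition~\ref{def_nu_i0} to recognize the rooting factor as $e^{V_{i_0^*}(t)}$ in both cases $i_0\in I$ and $i_0\in I^c$. No gap here.
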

\begin{proof}
Using \eqref{trid} and \eqref{eq:pot}, we have $(e^{V^\star(t)})_{I^c}=(\theta^a(t))_{I^c}$ and 
$
e^{V^\star(t)}_{I}=\hat G_\beta \hat\eta_I^a(t)
$
with  $\hat\eta_I^a(t)=W_{I,I^c}(\theta^a(t))_{I^c}$ corresponding to \eqref{eta_hat} with $\theta(t)$ and $\eta=0$. By Definition-Proposition~\ref{def_nu_i0}, this yields the identity.
\end{proof}
Set $$\tilde F^{W,\theta}_{i_0}=e^{-\demi \left<\theta, W, \theta \right>} F^{W,\theta}_{i_0}.$$
Proposition \ref{proptrid} applied at times $0$ and $t>0$, combined with \eqref{exp_int}, implies that 
\begin{eqnarray*}
&&
e^{-\int_0^t \sum_{j, X_u\to j} W_{X_u,j} e^{T_{X_u}(u)+T_{X_u^\star}(u)}du}
e^{V_{j_0^\star}(t)-V_{i_0^\star}(0)-\sum_{i\in V_0\cap I^c} T_i(t)}{1\over F^W_{i_0}}\qqq^W_{I,i_0}(d\beta_I,da_{I^c})
\\
&=&
e^{-\int_0^t \sum_{j, X_u\to j} W_{X_u,j} e^{T_{X_u}(u)+T_{X_u^\star}(u)}du}
e^{V_{j_0^\star}(t)-\sum_{i\in V_0\cap I^c} T_i(t)}{e^{-\demi \left<1,W 1\right>} \over \tilde F^W_{i_0}}\qqq^W_{I}(d\beta_I,da_{I^c})
\\
&=&
{e^{-\sum_{i\in V_0\cap I^c}T_i(t)}\over \prod_{i\in V_0\cap I} \theta_i(t)}{e^{-\demi \left<\theta(t),W\theta(t)\right>}  \over \tilde F^W_{i_0}}
e^{V_{j_0^\star}(t)} \qqq^{W,\theta(t)}_{I}(d\beta_I,da_{I^c})
\\
&=&
e^{-\sum_{i\in V_0} T_i(t)}{\tilde F^{W, \theta(t)}_{j_0}  \over \tilde F^W_{i_0}}
{1\over F^{W, \theta(t)}_{j_0}}\qqq^{W,\theta(t)}_{I,j_0}(d\beta_I,da_{I^c}).
\end{eqnarray*}

Hence, using \eqref{proba-traject-2-integree} and integrating the last expression on $\sss_I\times\aaa_I$, we obtain that under $\tilde \P_{I,i_0}^W$ of the process in Theorem \ref{mixing-Ia}, the probability that the process $X$ at time $t$ has performed $n$ jumps in infinitesimal time intervals $[t_i, t_i+dt_i)$, $0<t_1<\cdots <t_n<t$, following the trajectory
$\sigma_{0}=i_0, \sigma_1, \ldots, \sigma_n=j_0$, is equal to
\begin{eqnarray}\label{prob_sigma}
\left(\prod_{l=1}^{n} W_{\sigma_{l-1}, \sigma_l} e^{T_{\sigma_{l-1}}(t_l)+T_{\sigma_{l-1}^\star}(t_l)} dt_l\right)e^{-\sum_{i\in V_0} T_i(t)}
{\tilde F^{W,\theta(t)}_{j_0} \over \tilde F^W_{i_0}}.
\end{eqnarray}
From the definitions of $\theta(t)$ and of $F^{W,\theta(t)}_{j_0}$, we deduce that 
$$\tilde F^{W,\theta(t)}_{j_0}=\theta_{j_0}(t)\tilde F^{W^{T(t)}}_{j_0}=e^{T_{j_0^\star}(t)}\tilde F^{W^{T(t)}}_{j_0}.
$$ 
Hence the expression \eqref{prob_sigma} is 
equal to the expression~\eqref{density-*} in Part~\ref{partI}, which means that the probability of trajectories of $(X_t)$ are the same under $\overline \P^{W}_{i_0}$ and $\tilde \P^{W}_{I,i_0}$, and thus  $(X_t)$ has the same law under $\overline \P^{W}_{i_0}$ and $\tilde \P^{W}_{I,i_0}$.

Let us now deduce i) of Theorem~\ref{mixing-Ib}. Under $\overline \P_{i_0}^W$, the initial local time $(A_i)_{i\in V}$ is distributed according to 
$\nu_{\aaa,i_0}^W$ and $(U_i)_{i\in V}$ is defined in Theorem~\ref{main}~i) by
\begin{align}
\label{U-A-T}
U_i:=\lim_{t \to \infty} A_i+T_i(t)-t/N.
\end{align}
Note that the almost sure convergence of the previous limit is not a consequence of Theorem~\ref{main} but of a more elementary Lemma~\ref{Convergence} of Part~\ref{partI}. Besides, $U\in \uuu_{0}^W$, which implies
\begin{eqnarray}\label{U-pU}
U_i:=\lim_{t \to \infty}  p_{\uuu_0^W}(A+T(t)-t/N)=\lim_{t \to \infty}  p_{\uuu_0^W}(T(t)-t/N)
\end{eqnarray}
since $A\in \aaa$ (see Lemma~\ref{lem:proj} of Part~\ref{partI}). Hence, a.s. $U$ can be retrieved from the infinite trajectory of the process $(X(t))$ (i.e. it is a.s. equal to a measurable function of the path). Moreover, consider the time changed process $Z_s=X_{C^{-1}(s)}$ defined in Theorem~\ref{main}~ii). By an easy computation, conditionally on $A$, the jump rate of $Z$ at time $s$ from $i$ to $j$ is
$$
W_{i,j}e^{T_{j^\star}(C^{-1}(s))-T_{i^\star}(C^{-1}(s))+A_{j^\star}-A_{i^\star}}.
$$
By \eqref{U-pU} it means that 
the asymptotic jump rate of $Z$ from $i$ to $j$ is a.s.
\begin{align}\label{jumping_rate_bis}
W_{i,j}e^{U_{j^\star}-U_{i^\star}}.
\end{align}
Note that, since the graph is strongly recurrent and the \sVRJP visits infinitely often each vertex, the asymptotic jump rate of $Z$ is a measurable function of the path of $(X_t)_{t\ge 0}$.
This property will be useful to identify the limit of $(V_i(t))$ in terms of $(U_i)$.
From \eqref{U-A-T}, we also obtain
\begin{eqnarray}\label{retrieve_A}
A=\demi(U-U^\star)-\lim_{t\to \infty} \demi(T(t)-T^\star(t)).
\end{eqnarray}
It means that $(A_i)$ can also a.s. be retrieved from the trajectory of $(X_t)$. Besides, in Theorem~\ref{mixing-Ib}~i), we have for all $i\in I$
\begin{eqnarray}\label{retrieve_B}
\bbb_i=\sum_{j, i\to j} W_{i,j}e^{U_{j^\star}-U_{i^\star}}.
\end{eqnarray}

Consider now  $(X_t)$ under the law $\tilde\P^{W}_{I,i_0}$ as defined at the beginning of 
the section. The process is defined as a mixture of processes with $(\bbb_I,A_{I^c})$ distributed according to $\qqq^{W}_{I,i_0}$. The aim is to prove that  $(\bbb_I,A_{I^c})$ can be retrieved a.s. by the same measurable functionals of the path of $(X_t)$ as in \eqref{retrieve_A} and \eqref{retrieve_B}.  Let $U$ be defined from $(X_t)_{t\ge 0}$ by the last term of formula \eqref{U-pU}: as we have proved that $(X_t)$ has the same law under $\tilde\P^{W}_{I,i_0}$ and $\overline\P^{W}_{i_0}$, clearly the limit exists and is finite a.s. since it is the case under the law $\overline \P_{i_0}^W$. 
By definition, for all $i, j\in V$ such that $i\to j$, the jump rate of the process $(X_t)$ from $i$ to $j$ is equal to
$$
e^{T_{i}(t)+T_{i^\star}(t)}e^{V_{j^\star}(t)-V_{i^\star}(t)},
$$
hence for the time changed process $Z$, at time $s$ the jump rate is equal to 
$$
e^{V_{j^\star}(C^{-1}(s))-V_{i^\star}(C^{-1}(s))}.
$$ 
Since the asymptotic jump rate is a measurable function of the path, $(X_t)$ has the same law under $\tilde\P^{W}_{I,i_0}$ and $\overline\P^{W}_{i_0}$, and by \eqref{jumping_rate_bis}, this implies that
$$
\lim_{t\to\infty} V_{j^\star}(t)-V_{i^\star}(t)= U_{j^\star}-U_{i^\star}.
$$
Since the graph is strongly connected, we deduce that the previous equality is true for all $i,j\in V$. Since $V_i(t)=T_{i}(t)+A_i$  for all $i\in I^c$,  we have
$$
A_i=\lim_{t\to \infty} \left(\demi(V_i(t)-V_{i^\star}(t))-\demi(T_i(t)-T_{i^\star}(t))\right)= \demi(U_i-U_{i^\star})-\lim_{t\to\infty} \demi(T_i(t)-T_{i^\star}(t)), 
$$
which matches with \eqref{retrieve_A} on $I^c$. Besides, by definition, we have $\bbb_i=\sum_{j, i\to j}W_{i,j} e^{V_{j^\star}(t)-V_{i^\star}(t)}$, for all $i\in I$ and for all time $t\ge 0$. Letting $t$ go to infinity, we obtain $\bbb_i=\sum_{j, i\to j}W_{i,j} e^{U_{j^\star}-U_{i^\star}}$, for all $i\in I$ which matches \eqref{retrieve_B}. This concludes the proof.


\section{Proof of Lemma~\ref{diff}, Corollary~\ref{thm:identity_0_a} and Corollary~\ref{thm:identity_0_b}}
\subsection{Proof of Lemma~\ref{diff}
}
\label{sec:diff}

Fix $i_0\in V$, and let $I=V\setminus\{i_0,i_0^\star\}$. We first prove that $\Xi_{i_0}(\mathcal{U}_0^W)\subset \ddd_{i_0}$. Given $u\in\mathcal{U}_0^W$, we denote by $\be=(\beta_i)_{i\in V}$ the vector 
defined by 
\begin{eqnarray}\label{eq:be}
\beta_i=\sum_{j, i\to j} W_{i,j} e^{u_{j^\star}-u_{i^\star}}, \;\;\; \forall i \in V,
\end{eqnarray}
so that $\beta_I=\Xi_{i_0}(u)$, see \eqref{Xi}. Then $u\in\mathcal{U}_0^W$ implies $\be_{i}=\be_{i^\star}$ for all $i\in V$, so that $\beta_I\in \sss_I$. Furthermore, 
$H_\be e^{u^\star}=0$, where $e^{u^\star}=(e^{u_{i^\star}})_{i\in V}$. Let 
$$\hat H_\beta= \beta_I-W_{I,I},
$$ 
be defined as in \eqref{Gbe}: using the block matrix decomposition \eqref{block}, we have
\begin{equation}
\label{eq:res}
\left(\hat H_\be e^{u^\star}\right)_{I}=W_{I,I^c}\left(e^{u^\star}\right)_{I^c}.
\end{equation}
Assumption (2) of Proposition \ref{prop-M} in Appendix~\ref{Append_M_matrices} is satisfied with $A=\hat H_\be$, $x=\left( e^{u^\star}\right)_{I}$ and $y=W_{I,I^c}\left(e^{u^\star}\right)_{I^c}$, using that $\ggg$ is strongly connected: therefore, $\hat H_\be$ is a non-singular $M$-matrix in the sense of Definition \ref{def-M}, i.e. $\hat H_\be>0$. Hence $\beta_I\in \ddd_{i_0}$.

Next, our goal is to show that, given $(\be_i)_{i\in I}\in\mathcal{D}_{i_0}$, there exists a unique $u\in\mathcal{U}_0^W$ such that $(\beta_i)_{i\in I}=\Xi_{i_0}(u)$, 
and that $u$ is a differentiable function of $(\be_i)_{i\in I}$.
First assume that such an $u\in\mathcal{U}_0^W$ exists, and define $\beta$ by \eqref{eq:be}. As before we have $\hat G_\beta$, $\check W$ and $\check H_\beta$ as in \eqref{Gbe} and \eqref{Hcheck}. Now, using \eqref{eq-hbeta-productof3},  $H_\be e^{u^\star}=0$ is equivalent to
\begin{equation}
\begin{pmatrix}
   \hat H_\beta  & -W_{I,I^c}\\ 0 & \check H_\beta
    \end{pmatrix}
    \begin{pmatrix}
 \left(e^{u^\star}\right)_I    \\ \left(e^{u^\star}\right)_{I^c}
    \end{pmatrix}
   =0.
\end{equation}
Therefore
\begin{equation}
\label{chbe}
\check H_\beta \left(e^{u^\star}\right)_{I^c}=0,
\end{equation} 
which implies 
\begin{equation}
\label{detchbe}
\det \check H_\beta=0.
\end{equation}

Let us first assume that $i_0$ is self-dual: then \eqref{detchbe} is equivalent to
\begin{equation}
\label{bee}
\be_{i_0}=\check W_{i_0,i_0}
\end{equation}
and, using \eqref{eq:res}, $e^{u^\star}$ is given by
\begin{equation}
\label{eusd}
 \begin{cases}
e^{u^\star_{i_0}}=C
\\
\left(e^{u^\star}\right)_I=  \hat G_\be W_{I,i_0} e^{u^\star_{i_0}}
 \end{cases},
\end{equation}
where the constant $C>0$ is uniquely determined by $\sum_{i\in V}u_i=0$.

Let us now assume that $i_0$ is not self-dual, i.e. $i_0\ne i_0^\star$. Then \eqref{detchbe} implies
\begin{equation}
(\be_{i_0}-\check W_{i_0,i_0})^2=\check W_{i_0,i_0^\star}\check W_{i_0^\star,i_0}
\end{equation}
On the other hand, using $e^{u_{i_0}}$, $e^{u_{i_0^\star}}$, $\check W_{i_0,i_0^\star}$ $>0$ and \eqref{chbe}, it holds that $\be_{i_0}>\check W_{i_0,i_0}$ and, subsequently
\begin{equation}
\label{bee2}
\be_{i_0}=\check W_{i_0,i_0}+\sqrt{\check W_{i_0,i_0^\star}\check W_{i_0^\star,i_0}}.
\end{equation}
Therefore $\be_{i_0}=\be_{i_0^\star}$ is uniquely determined from $(\be_i)_{i\in I}$ by \eqref{bee2}.
On the other hand, \eqref{chbe} is equivalent to the combination of \eqref{bee2} and 
\begin{equation}\label{expu_W}
e^{u_{i_0^\star}-u_{i_0}}=\sqrt{\frac{\check W_{i_0,i_0^\star}}{\check W_{i_0^\star,i_0}}}.
\end{equation}
Therefore, using \eqref{eq:res},  $e^{u^\star}$ is given by
\begin{equation}
\label{eunsd}
 \begin{cases}
e^{u^\star_{i_0}}= C \sqrt{\check W_{i_0,i_0^\star}}  
\\
e^{u^\star_{i_0^\star}}=e^{u_{i_0}}= C \sqrt{\check W_{i^\star_0,i_0}}
\\
\left(e^{u^\star}\right)_I= \hat G_\be W_{I,I^c}\left(e^{u^\star}\right)_{I^c}
 \end{cases}
\end{equation}
where again the constant $C>0$ is uniquely determined by $\sum_{i\in V}u_i=0$. Remark also that $\hat G_\be$ and $\check W_{i_0,i_0^\star}$, $\check W_{i^\star_0,i_0}$, are continuous functions of $\beta_I$ on the domain $\ddd_{i_0}$.

In summary, given $(\be_i)_{i\in I}\in\mathcal{D}_{i_0}$,  if $i_0$ self-dual (respectively if $i_0\ne i_0^\star$), then $e^{u}$ is uniquely determined by \eqref{eusd} (resp. by \eqref{eunsd}) which is $C^1$ for $\beta_I\in \ddd_{i_0}$. Besides,
 $\be_{i_0}$ is equal to \eqref{bee}  (resp. by \eqref{bee2}). Conversely, those definitions of  $e^u$ ensure that $H_\be e^{u^\star}=0$, which enables us to conclude the proof of Lemma \ref{diff}.

\subsection{Proof of Corollary~\ref{thm:identity_0_a} and Corollary~\ref{thm:identity_0_b}}
\label{sec:identity}
By Proposition~\ref{conditionning}, since $i_0\in I^c$, we know that
$$
\mathcal{Q}_{I,i_0}^{W}(d\beta_I,da_{I^c})=
\nu_{\sss_I}^{W_{I,I},1_I,\hat \eta_I}(d\beta_I) \nu_{\aaa_{I^c},i_0}^{\check W_{I^c,I^c}}(da_{I^c})
$$
with $\hat \eta_I=W_{I,I^c} 1_{I^c}$. Assume first that $i_0=i^\star_0$, then  $\nu_{\aaa_{I^c},i_0}^{\check W_{I^c,I^c}}=1$ is the measure constant equal to 1, and the $\beta_I$ marginal of $\mathcal{Q}_{I,i_0}^{W}(d\beta_I,da_{I^c})$ is $\nu_{\sss_I}^{W_{I,I},1_I,\hat \eta_I}(d\beta_I)$.  With this notation we have, using \eqref{detchbe},
\begin{eqnarray}
\nonumber
\;\;\;\;\;\;\;\;\;\; &&\nu_{\sss_I}^{W_{I,I},1_I,\hat \eta_I}(d\beta_I)
\\
\nonumber
&=&{1\over \sqrt{2\pi}^{\vert\sss_I\vert}\sqrt{\vert \hat H_\beta\vert}}e^{-\demi\sum_{i\in I} \beta_i +\demi\left<1_I, W_{I,I} 1_I\right>-\demi\left<1_{I^c},W_{I^c,I} \hat G_\beta W_{I^c,I} 1_{I^c}\right>+\left<1_I,W_{I,I^c}1_{I^c}\right>} d\beta_I
\\
\nonumber&=&
{1\over \sqrt{2\pi}^{\vert\sss_I\vert}\sqrt{\vert \hat H_\beta\vert}}e^{-\demi\sum_{i\in I} \beta_i +\demi\left<1, W 1\right>-\demi \check W_{i_0,i_0}} d\beta_I
\\
\label{nu-i0}
&=&
{1\over \sqrt{2\pi}^{\vert\sss_I\vert}\sqrt{\vert \hat H_\beta\vert}}e^{-\demi\sum_{i\in V} \beta_i +\demi\left<1, W 1\right>} d\beta_I.
\end{eqnarray}

When $i_0\neq i_0^\star$,  $I^c=\{i_0,i_0^\star\}$, the integral of $\nu_{\aaa_{I^c},i_0}^{\check W_{I^c,I^c}}(da_{I^c})$ can be computed explicitly. Indeed
\begin{eqnarray*}
&&\int_{\aaa_{I^c}} \nu_{\aaa_{I^c},i_0}^{\check W_{I^c,I^c}}(da_{I^c})
={e^{\demi(\check W_{i_0,i_0^\star}+\check W_{i_0^\star,i_0})}\over \sqrt{2\pi}}
\int_{-\infty}^{+\infty} e^{-a}\exp\left(-\demi  W_{i_0,i_0^\star}e^{2a}-\demi \check W_{i^\star_0,i_0}e^{-2a}\right)
da
\\
&=&
{e^{\demi(\check W_{i_0,i_0^\star}+\check W_{i_0^\star,i_0})}\over \sqrt{2\pi}}
\int_{-\infty}^{+\infty} e^{-a} \exp\left(-\demi\sqrt{\check W_{i_0,i_0^\star} \check W_{i^\star_0,i_0}} \left( \sqrt{{\check W_{i_0,i_0^\star}\over  \check W_{i^\star_0,i_0}}}e^{2a} + \sqrt{{\check W_{i_0^\star,i_0}\over  \check W_{i_0,i_0^\star}}}e^{-2a}\right)\right)
da
\\
&=&
{1\over \sqrt{2\pi}} \left({\check W_{i_0,i_0^\star}\over  \check W_{i^\star_0,i_0}}\right)^{1\over 4}
e^{\demi(\check W_{i_0,i_0^\star}+\check W_{i_0^\star,i_0})}
\int_0^\infty \cosh(y/2)\exp\left({-\sqrt{\check W_{i_0,i_0^\star}\check W_{i_0^\star,i_0}}\cosh(y)}\right) dy
\\
&=&{1\over 2} {1\over \sqrt{\check W_{i^\star_0,i_0}}}e^{\demi(\check W_{i_0,i_0^\star}+\check W_{i_0^\star,i_0})-\sqrt{\check W_{i_0,i_0^\star}\check W_{i_0^\star,i_0}}},
 \end{eqnarray*}
where we changed to variable $y$ such that $e^{y}=\sqrt{{\check W_{i_0,i_0^\star}\over  \check W_{i^\star_0,i_0}}}e^{2a}$ in the second equality and where the integral in the third line is the modified Bessel function $K_{\demi}(\sqrt{\check W_{i_0,i_0^\star}\check W_{i_0^\star,i_0}})$ (recall that $K_\demi$ has explicit value $K_\demi(z)=\sqrt{\pi\over 2z}e^{-z}$).

From $\beta_I$ we can define $\beta_{i_0}=\beta_{i_0^\star}$ by \eqref{bee2}. With this notation we deduce from Lemma \ref{lem_identities} that
 \begin{align}
\label{nu-i02}
& 
\int_{\aaa_{I^c}} \mathcal{Q}_{I,i_0}^{W}(d\beta_I,da_{I^c})
\\
\nonumber&=
{e^{\demi(\check W_{i_0,i_0^\star}+\check W_{i_0^\star,i_0})-\sqrt{\check W_{i_0,i_0^\star}\check W_{i_0^\star,i_0}}}\over 2 \sqrt{\check W_{i^\star_0,i_0}} \sqrt{2\pi}^{\vert\sss_I\vert}\sqrt{\vert \hat H_\beta\vert}}e^{-\demi\sum_{i\in I} \beta_i +\demi\left<1_I, W_{I,I} 1_I\right>-\demi\left<1_{I^c},W_{I^c,I} \hat G_\beta W_{I^c,I} 1_{I^c}\right>+\left<1_I,W_{I,I^c}1_{I^c}\right>} d\beta_I
\\
\nonumber&= 
{e^{\demi(\check W_{i_0,i_0^\star}+\check W_{i_0^\star,i_0})-\sqrt{\check W_{i_0,i_0^\star}\check W_{i_0^\star,i_0}}}\over 2 \sqrt{\check W_{i^\star_0,i_0}} \sqrt{2\pi}^{\vert\sss_I\vert}\sqrt{\vert \hat H_\beta\vert}}e^{-\demi\sum_{i\in I} \beta_i +\demi\left<1, W 1\right>-\demi\left<1_{I^c},\check W_{I^c,I^c} 1_{I^c}\right>} d\beta_I
\\
\nonumber
&=
{1\over 2 \sqrt{\check W_{i^\star_0,i_0}} \sqrt{2\pi}^{\vert\sss_I\vert}\sqrt{\vert \hat H_\beta\vert}}e^{-\demi\sum_{i\in V} \beta_i +\demi\left<1, W 1\right>} d\beta_I.
\end{align}
Corollary~\ref{thm:identity_0_a} and Corollary~\ref{thm:identity_0_b} are therefore  consequences of the following proposition.

\begin{proposition}
\label{prop:identity}
The mixing measure $\mu_{i_0}^{W}$ of $(U_i)_{i\in V}$ for the randomized  \sVRJP, starting from $i_0$ with conductances $W$,  is equal to the pullback measure by $\Xi_{i_0}$ of the measure \eqref{nu-i0} when $i_0=i_0^\star$ (resp. \eqref{nu-i02} when $i_0\neq i_0^\star$).
\end{proposition}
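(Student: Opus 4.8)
The plan is to change variables from $(\beta_i)_{i\in I}$ (with $\beta_{i_0}=\beta_{i_0^*}$ determined by \eqref{bee} resp. \eqref{bee2}) to $u\in\uuu_0^W$ via the diffeomorphism $\Xi_{i_0}$ of Lemma~\ref{diff}, and match the resulting density against $\frac{1}{F^W_{i_0}}\mu_{i_0}^W(du)$ as given in Definition~\ref{def_mui0W}. First I would compute the Jacobian of $\Xi_{i_0}:\uuu_0^W\to\ddd_{i_0}$. The map is $u\mapsto(\beta_i(u))_{i\in I}$ with $\beta_i(u)=\sum_{j,i\to j}W_{i,j}e^{u_{j^*}-u_{i^*}}$; differentiating, the tangent map sends $du\in T_u(\uuu_0^W)$ to $(d\beta_i)_{i\in I}$ where $d\beta_i=-\beta_i\,du_{i^*}+\sum_{j}W_{i,j}e^{u_{j^*}-u_{i^*}}du_{j^*}=-({}^tK^u\,du^*)_i$ essentially (up to the usual $\star$-bookkeeping). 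Using Lemma~\ref{tangent}, which identifies $T_h(\uuu_0^W)=({}^tK^h)^{-1}(\sss_0)$, and the definition of $\sigma_{\uuu_0^W}$ as the pull-back of Lebesgue measure on $\sss_0$ by orthogonal projection, I expect the change-of-variables factor to produce exactly the ratio $\sqrt{D(W^u)}/\det_\aaa(-K^u)$ (or its analogue), together with the $\sqrt{|V|}$ and $\sqrt2^{|V_1|}$ constants, after the manipulations with $\hhh_0=\aaa\oplus\sss_0$ blocks already used in Step~2 of the proof of Theorem~\ref{main}~(i) (the identities \eqref{ratio_det}, \eqref{N_D_K}). This is the computational heart of the argument.

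Second, I would match the exponential weights. The density \eqref{nu-i0}/\eqref{nu-i02} carries $e^{-\demi\sum_{i\in V}\beta_i+\demi\langle 1,W1\rangle}/\sqrt{|\hat H_\beta|}$, whereas $\mu_{i_0}^W$ carries $e^{u_{i_0^*}-\sum_{i\in V_0}u_i}\,e^{-\demi\sum_{i\to j}W_{i,j}(e^{u_{j^*}-u_{i^*}}-1)}$. Since $\beta_i(u)=\sum_{j,i\to j}W_{i,j}e^{u_{j^*}-u_{i^*}}$, the quantity $\sum_{i\in V}\beta_i(u)=\sum_{i\to j}W_{i,j}e^{u_{j^*}-u_{i^*}}$, so the exponential part of \eqref{nu-i0} becomes $e^{-\demi\sum_{i\to j}W_{i,j}(e^{u_{j^*}-u_{i^*}}-1)}$ — matching $\mu_{i_0}^W$ directly. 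The factor $e^{u_{i_0^*}-\sum_{i\in V_0}u_i}$ must then come out of the Jacobian / normalization: the term $\sum_{i\in V_0}u_i$ should appear because $\det\check H_\beta=0$ constrains one extra degree of freedom along $\sss_0$ (recall $\dim\uuu_0^W=|V_0|+|V_1|-1$ while $\ddd_{i_0}$ lives in $\sss_I$ of dimension $|V_0|+|V_1|-1$ minus the palindrome $i_0$), and $e^{u_{i_0^*}}$ should appear from the rooting (the passage from $\nu_\sss^W$ to $\nu_{\sss,i_0}^W$, i.e. multiplication by $(G_\beta\eta)_{i_0}$ in the $\eta\to0$ limit, which in the degenerate case amounts to picking up $e^{u_{i_0^*}}$ via \eqref{eusd}/\eqref{eunsd}). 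I would also need $\sqrt{|\hat H_\beta|}=\sqrt{\det\hat H_\beta}$ to combine with the $\sqrt{D(W^u)}$ and $\det_\aaa$ factors; here I would use that $\hat H_\beta=\hat G_\beta^{-1}$ and that, by the block decomposition \eqref{eq-hbeta-productof3}, $\det H_\beta=\det\hat H_\beta\cdot\det\check H_\beta$ with $\det\check H_\beta=0$ — so one must work with the \emph{reduced} determinant, i.e. $\det$ of $H_\beta$ restricted to $(\ker H_\beta)^\perp$, which is where $D(W^u)$ enters via the matrix-tree theorem applied to $K^u$ and the relation $\beta_i(u)=-K^u_{i,i}$ on the diagonal.

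Third, I would check the overall normalizing constant: one knows from Theorem~\ref{Back_main}~i), already proved, that the pull-back of $\frac{1}{F^W_{i_0}}\qqq^W_{I,i_0}$ under $\Xi_{i_0}\otimes\Id$ is the joint law of $(U,(A_{i_0},A_{i_0^*}))$; marginalizing over $(A_{i_0},A_{i_0^*})$ (trivial when $i_0$ self-dual, an explicit Bessel-$K_{1/2}$ integral computed in the excerpt when $i_0\neq i_0^*$) reduces the claim to identifying the pull-back of \eqref{nu-i0}/\eqref{nu-i02} with $\frac{1}{F^W_{i_0}}\mu_{i_0}^W$. Since both sides are then known to be probability measures (the left by Theorem~\ref{mixing-I} combined with the fact that $\qqq^W_{I,i_0}/F^W_{i_0}$ is a probability, the right by Theorem~\ref{main}~i)), any multiplicative constant in the density comparison is forced to be $1$, so it suffices to verify the \emph{shapes} of the two densities agree — i.e. only the $u$-dependent factors need be matched, not the global constant. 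The main obstacle is the Jacobian computation in the degenerate direction: because $\det\check H_\beta=0$, the map $\Xi_{i_0}$ collapses the radial direction in $\uuu_0^W$, and one has to carefully track how the Euclidean volume on $\sss_0$ (defining $\sigma_{\uuu_0^W}$) transforms, separating the $\aaa$-part (giving $\det_\aaa(-K^u)$ in the denominator) from the $\sss_0$-part (giving, after the matrix-tree identity and the $|V|\,D(W^u)=\det_{\hhh_0}(-K^u)$ relation of \eqref{N_D_K}, the $\sqrt{D(W^u)}$ numerator and the constants $\sqrt{|V|}$, $\sqrt2^{\pm|V_1|}$). Once the Jacobian is pinned down the rest is bookkeeping of exponential factors already performed in display \eqref{nu-i0}.
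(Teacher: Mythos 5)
Your plan is essentially the paper's own proof: change variables by the diffeomorphism $\Xi_{i_0}$ of Lemma~\ref{diff}, compute the Jacobian of $\Xi_{i_0}$ against the volume $\sigma_{\lll_0^W}$ using Lemma~\ref{tangent}, the definition \eqref{volume} and the identities \eqref{ratio_det}, \eqref{N_D_K}, match the exponential weight via $\sum_{i\in V}\be_i(u)=\sum_{i\to j}W_{i,j}e^{u_{j^*}-u_{i^*}}$, and turn $\det\hat H_\beta$ into $D(W^u)$ by a Schur/matrix-tree argument; this is exactly what the paper does through Lemma~\ref{lem_chgt_var} and the computation following it. Two points of divergence are worth flagging. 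First, the bookkeeping you leave heuristic is organized differently in the paper: the Jacobian itself yields $d_{i_0}\sqrt2^{-|V_1|}\sqrt{|V|}\,\bigl(\prod_{i\in\tilde I}e^{-u_i-u_{i^*}}\bigr)D(W^u)/\det_\aaa(-K^u)$, with $D(W^u)$ to the \emph{first} power and the crucial factor $\prod_{i\in\tilde I}e^{-u_i-u_{i^*}}$ coming from the prefactor in $d\be_i=e^{-u_i-u_{i^*}}(K^u\,du^*)_i$ that you drop as "bookkeeping"; the square root $\sqrt{D(W^u)}$ and the weight $e^{u_{i_0^*}-\sum_{i\in V_0}u_i}$ only emerge after combining this with the $1/\sqrt{\det\hat H_\beta}$ of \eqref{nu-i0}--\eqref{nu-i02} through the minor identity $\det(H_\beta)_{V\setminus\{i_0\},V\setminus\{i_0\}}=\sqrt{\check W_{i_0,i_0^*}\check W_{i_0^*,i_0}}\,\det\hat H_\beta=\bigl(\prod_{i\neq i_0}e^{-u_i-u_{i^*}}\bigr)D(W^u)$ together with \eqref{expu_W}; your dimension-count explanation for the factor $e^{-\sum_{V_0}u_i}$ ("one extra degree of freedom lost because $\det\check H_\beta=0$") is off, since in fact $\dim\sss_I=\dim\lll_0^W$ in both cases, consistent with Lemma~\ref{diff} -- but this is a side remark and does not derail the plan. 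Second, and more substantively, you fix the overall constant by noting that both normalized measures are probabilities, which invokes Theorem~\ref{main}~(i); the paper instead tracks every constant explicitly, and this is not pedantry: the purpose of this section is to give a second, purely analytic derivation of the normalization identity $\int_{\lll_0^W}\mu_{i_0}^W=F^W_{i_0}$ of Remark~\ref{identity_mixing}, so leaning on Theorem~\ref{main}~(i) (whose proof is the Feynman--Kac one) would make that second derivation circular, even though it is logically admissible for the bare statement of the proposition since Theorem~\ref{main} is established earlier.
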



\begin{proof} We start by the computation of the Jacobian.
\begin{lemma}\label{lem_chgt_var}
By the change of variables 
$u=\Xi_{i_0}^{-1}(\beta_I)$, the measure $d\beta_I$ on $\ddd_{i_0}$ is transformed to the measure on $\uuu_0^W$ given by
$$
{d_{i_0} \sqrt{2}^{-\vert V_1\vert}}\left( \prod_{i\in \tilde I} e^{-u_i-u_{i^\star}}\right){\sqrt{\vert V\vert}} {D(W^u)\over \det_\aaa({-}K^u)} \sigma_0^W(du),
$$
where $d_{i_0}=2$ if $i_0\neq i_{0}^\star$ and $d_{i_0}=1$ if $i_0=i_0^\star$.
\end{lemma}
\begin{proof}
We need to remind a few definitions and properties from Part~\ref{partI}~Section~\ref{s_limiting}. The space $\sss_0$ is defined by $\sss_0=\{
x\in \sss, \; \sum_{i\in V} x_i=0\}$, and $\ppp_{\sss_0}:\uuu_0^W\mapsto \sss_0$ is the projection $\ppp_{\sss_0}(u)=\demi(u+u^\star)$. 
By 
Lemma~\ref{lem:proj}, for any $u\in \sss_0$ there is a unique $a\in \aaa$ such that $u+a\in \uuu_0^W$. Hence, $\ppp_{\sss_0}$ is invertible and we denote by  
$$\xi: \sss_0\mapsto\uuu_0^W$$ 
its inverse.
Remind that the volume measure on $\sigma_{\mathcal{U}_0^W}(du)$ is defined as the pull-back of the euclidean measure $d\lambda_{\sss_0}$ on $\sss_0$ by the orthogonal projection $\ppp_{\sss_0}$, i.e. $\sigma_{\mathcal{U}_0^W}(du)= \ppp_{\sss_0}^\star(d\lambda_{\sss_0})$ (see Part~\ref{partI}, \eqref{volume}). The hyperplane tangent to $\uuu_0^W$ at the point $u\in \uuu_0^W$ can be parametrized as $(\Ktrans^u)^{-1}(\sss_0)$, see Part~\ref{partI} Lemma~\ref{tangent}.  

It will be convenient to define $\Xi:\uuu_0^W\mapsto \sss$ as the function defined by
$$
\Xi(u)_i=\sum_{j, i\to j} W_{i,j} e^{u_{j^\star}-u_{i^\star}},\;\;\; i\in V,
$$
which extends the definition \eqref{Xi} so that $\Xi(u)_I=\Xi_{i_0}(u)$. Let $\tilde I=(V_0\cup V_1)\cap I$ be the set obtained from $I$ by choosing one representative in each dual pair of points. Let us also denote by $\iota$ be the projection from $\R^V$ to $\R^{\tilde I}$. Hence, since on $\sss_0$ we chose the Euclidian volume measure and on $\ddd_{i_0}$ the measure $\prod_{i\in \tilde I} d\beta_{i}$, we need to compute the determinant of the differential of the application from $\sss_0$ to $\R^{\tilde I}$ given by
\begin{align}\label{chgt-var-2}
\iota \circ \Xi \circ \xi,
\end{align}
where on $\sss_0$ we take an orthonormal base 
and the canonical base on $\R^{\tilde I}$.  

Remind the definition of the matrix $K^u$ in \eqref{Ku}. Let us compute the differential of $\Xi$. We have for $i\in V$,
\begin{align*}
\frac{\partial\Xi(u)_{i^\star}}{\partial u_j}&=W_{ji}e^{u_j-u_i}=e^{-u_i-u_{i^\star}} K_{j,i}^u, \; \tx{ if }j\ne i\\
\frac{\partial\Xi(u)_{i^\star}}{\partial u_i}&=-\sum_{j:\,j\to i}W_{ji}e^{u_j-u_i}=e^{-u_i-u_{i^\star}} K_{i,i}^u
\end{align*}
where in the last equality we used that $u\in \uuu_0^W$, hence that $\sum_{j, j\to i} W_{j,i}^u=\sum_{j,i\to j} W_{i,j}^u=-K_{i,i}^u$.
It follows that
$$
D \Xi^\star(du)= e^{-u-u^\star}{}^t\! K^u(du), \;\;\; \forall du\in T_{u}(\uuu_0^W),
$$
where $e^{-u-u^\star}$ is the operator of multiplication by $ e^{-u_i-u_{i^\star}}$ at each point $i\in V$. From Part~\ref{partI} Lemma~\ref{tangent}, we know that $\Ktrans^u$ is bijective from $T_u(\uuu_0^W)$ to $\sss_0$. Besides, for $s\in \sss_0$, $(d\xi_s)^{-1}=(\ppp_{\sss_0})_{T_{\xi(s)}(\uuu_0^W)}$. Hence, $\Ktrans\circ d\xi_s$ is bijective on $\sss_0$ with determinant
$$
{1\over {\det}_{\sss_0}\left((\Ktrans^{\xi(s)})^{-1}\right)}.
$$ 
To compute the Jacobian of the transformation \eqref{chgt-var-2}, we still have to compute the determinant of
$$
d\iota\circ (e^{-u-u^\star}):\sss_0\mapsto \R^{\tilde I},
$$
with the choice of base specified earlier, which is equal to (see below for the proof)
\begin{align}\label{change_volume_tau}
\left( \prod_{i\in \tilde I} e^{-u_i-u_{i^\star}}\right){1\over \sqrt{\vert V\vert}} {{d_{i_0} \over \sqrt{2}^{\vert V_1\vert }}}.
\end{align}
Overall, by the change of variable given by $\beta_I=\Xi_{i_0}(u)$, we obtain that
$$
d\beta_I= \left( \prod_{i\in \tilde I} e^{-u_i-u_{i^\star}}\right){1\over \sqrt{\vert V\vert}} {{d_{i_0} \over \sqrt{2}^{\vert V_1\vert }}} \left({\det}_{\sss_0}\left((\Ktrans^{u})^{-1}\right)\right)^{-1} \sigma_0^W(du).
$$
We conclude the proof of the lemma using elementary computations on determinants given in 
formula \eqref{ratio_det} and \eqref{N_D_K}.
\end{proof}
\begin{proof}[Proof of formula \eqref{change_volume_tau}]
For $i\in V$, we set $d_i=1$ if $i=i^\star$ and $d_i=2$ if $i\neq i^\star$. We have that  $d\iota\circ (e^{-u-u^\star})=(e^{-u-u^\star})\circ d\iota$ so that we have to compute the Jacobian of the change of variable given by $\iota$ from $\sss_0$ with the Euclidian volume measure to $\R^{\tilde I}$ with the measure $\prod_{i\in \tilde I} d\beta_i$. On $\sss_0$, we choose the base $(e_i)_{i\in \tilde I}$ given by
$$
e_i=\begin{cases} 
(\delta_i+\delta_{i^\star})-(\delta_{i_0}+\delta_{i_0^\star}),&\hbox{ if $i\neq i^\star$}
\\
\delta_i-\demi(\delta_{i_0}+\delta_{i_0^\star}),&\hbox{ if $i= i^\star$}
\end{cases}
$$
With the basis $(e_i)_{i\in \tilde I}$ on $\sss_0$ and the canonical base on $\R^{\tilde I}$, $\iota$ is represented by the identity matrix. Besides, by a simple computation, we have
$$
(e_i,e_j)= \indic_{i=j}d_i+{d_id_j\over d_{i_0}},
$$
so that the determinant of the Gramm matrix of the base $(e_i)_{i\in \tilde I}$ is given by
$$
\det\left((e_i,e_j)\right)_{i,j\in \tilde I}= \left(\prod_{i\in \tilde I}d_i\right)\left(1+\sum_{i\in \tilde I} {d_i\over d_{i_0}}\right)=
{1\over d_{i_0}^2}\left(\prod_{i\in \tilde V} d_i\right)\left( d_{i_0}+\sum_{i\in \tilde I} d_i\right)= {2^{\vert V_1\vert}\vert V\vert \over d_{i_0}^2}.
$$
\end{proof}
Let us denote by $\nu_{I,i_0}^W(d\beta_I)$ the measures which appear in \eqref{nu-i0} and \eqref{nu-i02}, depending on whether $i_0=i_0^\star$ or $i_0\neq i_0^\star$. 
Remind that, if $u\in \uuu_0^W$ and $\beta_I\in \ddd_{i_0}$ are related by $\beta_I=\Xi_{i_0}(u)$, then $u$ can be computed with \eqref{eusd} and \eqref{eunsd}, and that $\beta:=\Xi(u)$ can be computed on $i_0,i_0^\star$ from $\beta_I$ by \eqref{bee} and \eqref{bee2}. 
\begin{remark}\label{rk-nu_I_i0}
The law $\nu_{I,i_0}^W(d\beta_I)$ is in fact the law of the asymptotic jump rate $(\bbb_i)_{i\in V}$ of the \sVRJP on the full set of vertices, mentionned in Remark~\ref{rk:identity_0_b}. It is an easy consequence of the current proof.
\end{remark}

If $i_0\neq i_0^\star$, using Schur complement, we can write that
$$
\det \left( H_\beta\right)_{V\setminus\{i_0\},V\setminus\{i_0\}}= \left(\beta_{i^\star_0}-W_{i_0^\star}\hat G_\beta W_{I,i_0^\star}\right) \det(\hat H_\beta)=\sqrt{W_{i_0^\star,i_0}W_{i_0,i_0^\star}}\det\left(\hat H_\beta\right),
$$
where we use \eqref{bee2} in the last equality. Besides, by factorizing $e^{-u_i-u_i^\star}$ on each line we deduce
$$
\det \left( H_\beta\right)_{V\setminus\{i_0\},V\setminus\{i_0\}}= \left(\prod_{i\neq i_0}e^{-u_i-u_{i^\star}}\right) D(W^u).
$$
Combining with \eqref{nu-i02} we obtain that with the notation $u=\Xi^{-1}(\beta)$ as before,
\begin{align*}
\nu_{I,i_0}^W(d\beta_I)=&{1\over 2} {1\over \sqrt{2\pi}^{\vert\sss_I\vert}}
\left({W_{i_0,i_0^\star}\over W_{i_0^\star,i_0}}\right)^{{1\over 4}}
{\left(\prod_{i\neq i_0}e^{u_i+u_{i^\star}}\right)^{\demi}\over \sqrt{D(W^u)}}
e^{-\demi\sum_{i\to j} W_{i,j} (e^{u_{j^\star}-u_{i^\star}}-1) } d\beta_I
\\
=&
{1\over 2} {1\over \sqrt{2\pi}^{\vert\sss_I\vert}}
e^{u_{i_0^\star}}
{\left(\prod_{i\in I}e^{u_i+u_{i^\star}}\right)^{\demi}\over \sqrt{D(W^u)}}
e^{-\demi\sum_{i\to j} W_{i,j} (e^{u_{j^\star}-u_{i^\star}}-1) } d\beta_I,
\end{align*}
using \eqref{expu_W} in the last equality. Together with Lemma~\ref{lem_chgt_var}, this implies that by the change of variable $u=\Xi_{i_0}^{-1}(\beta_I)$, we have the relation
\begin{align*}
\nu_{I,i_0}^W(d\beta_I)=&
{\sqrt{\vert V\vert}\over \sqrt{2\pi}^{\vert\sss_I\vert}}\sqrt{2}^{-\vert V_1\vert}
e^{u_{i_0^\star}}
\left(\prod_{i\in V_0}e^{-u_i}\right)
e^{-\demi\sum_{i\to j} W_{i,j} (e^{u_{j^\star}-u_{i^\star}}-1) } {\sqrt{D(W^u)}\over \det_\aaa(-K^u)} \sigma_0^W(du).
\end{align*}
This concludes the proof when $i_0\neq i_0^\star$. The computation is similar and even simpler in the case $i_0=i_0^\star$.
\end{proof}


\begin{appendix}
\section*{Proof of Lemma \ref{Convergence}}
\label{pflemconv}
\begin{proof}[Step 1: Proof for the randomized $\star$-VRJP]
Let us first  prove the result when the initial random rates $W^A$ are random, $A\sim \nu_{i_0}^W$, as in section \ref{pervrjp}. We want to show that $T(t)-t/N$ converges 
to a random variable $\al\in \lll_0^{W^A}$ for $\nu_{\aaa,i_0}^W$ almost all $A$, hence for Lebesgue-almost all $A\in \aaa$ since $\nu_{\aaa,i_0}^W$ is absolutely continuous. We use the partial exchangeability of the process proved in 
Proposition \ref{exchangeability}.
We know from Proposition \ref{exchangeability} (using the notation $C$ therein)  that, under the randomized law $\overline \P_{i_0}^W$,  $Z_s=X_{C^{-1}(s)}$ is a mixture of Markov jump processes, hence
that 
$$
\lim_{s\to \infty} {1\over s} \ell_i^Z(s)=V_i
$$
exists a.s. ($V_i$ is random), where $\ell^Z(s)$ is the local time of $Z$ at time $s$, see \eqref{loc-time-Z}.

From \eqref{formule_chgt_tps}, with $s=C(t)$,
$$
\ell_i^Z(s)+\ell_{i^\star}^Z(s)= e^{T_i(t)+T_{i^\star}(t)}-1, \;\;\; \forall i\in V,
$$ 
hence, 
$$
\lim_{t \to\infty} \left( T_i(t)+T_{i^\star}(t)-\log(C(t))\right)= \log(V_i+V_{i^\star}).
$$
Since
\beq
2t= \sum_{i\in V} (T_i(t)+T_{i^\star}(t))
=
\sum_{i\in V} \log(1+\ell_i^Z(s)+\ell_{i^\star}^Z(s))
\eeq
we have,
$$
\lim_{t\to\infty}
2t- N \ln C(t) =
\sum_{i\in V} \log (V_i+V_{i^\star}),
$$
hence, we deduce
$$
\lim_{t\to\infty} T_i(t)+T_{i^\star}(t) -  2t/N = \log(V_i+V_{i^\star})- {1\over N}\left( \sum_{i\in V} \log (V_i+V_{i^\star})\right).
$$

Let us now conclude that $T(t)-t/N$ converges a.s. and that
its limit $U$ is in $\lll_0^{W^A}$.
Let 
$$
H(t)=\ppp_{\lll_0^{W^A}}(T(t)-t/N).
$$
Since $T_i(t)+T_{i^\star}(t)-2t/N$ convergences, it implies by lemma \ref{transversal} that $H(t)$ converges a.s. to a point $U$ in $\lll^{W^A}_0$.
It remains to show that $\xi_i(t)=T_i(t)-H_i(t)$ converges a.s. to 0.

Let $N_{i,j}(t)$ be the number of crossings of the directed edges $(i,j)$ or $(j^\star,i^\star)$ at time $t>0$.
Since the discrete time process associated with the randomized $\star$-VRJP $X_t$ is partially exchangeable, 
$${N_{i,j}(t)\over \sum_{(k,l)\in \tilde E}N_{k,l}(t)}$$ converges a.s. to a random variable $x_{i,j}$.

For each edge $e=(i,j)\sim(j^\star,i^\star)$ of $\tilde E$, let $\tau_k^{e}$ be a point process at rate $W_{i,j} e^t dt$.
Then the process $X_t$ can be represented as follows : when $X_t$ is at position $i$, it waits for the first time that, for an adjacent edge $(i,j)$,
$T_i(t)+T_{j^\star}(t)$ coincides with $\tau^{i,j}_k$ for some $k$, at which time it jumps to $j$.
This implies that, for all $(i,j)\in E$, 
$$
\tau_{N_{i,j}}(t)\le T_i(t)+T_{j^\star}(t) < \tau_{N_{i,j}(t)+1}.
$$
Now, direct computation yields that $e^{\tau_k^e}-1$ is a Poisson Point Process at rate $W_{i,j}$ for all $(i,j)\in \tilde E$, hence 
$W_e e^{\tau_k^e} \sim k $ a.s.. This implies that 
$$
W_{i,j}e^{T_i(t)+T_{j^\star}(t)} \sim N_{i,j}(t) \tx{ a.s.}
$$
Hence the convergence of the ratios ${N_{i,j}(t)\over \sum_{(k,l)\in \tilde E}N_{k,l}(t)}$ implies the a.s. convergence
of $T_i(t)+T_{j^\star}(t)-2t/ N$. 
Taking into account that 
$$
\dive (N(t))= \delta_{i_0} -\delta_{X_t},
$$
it implies that 
$$
\lim_{t\to 0} \dive(W^A_{i,j}e^{T_{i}+T_{j^\star}-2t/N})=0,
$$ 
hence, that $\lim_{t\to \infty} \xi(t)=0$.
\ali\ali
{\it Step 2: General case.}
Now we consider the $\star$-VRJP with initial rates $W_{i,j}$. Let $i_0\in V$, we define $\tau$ as the first return time to $i_0$ after the cover time of the graph, 
$$
\tau=\inf\{t>\tilde \tau, \; X_t=i_0\hbox{ and $\exists$ $t'$, $\tilde \tau<t'< t$ such that $X_{t'}\neq i_0$}\},$$
where 
$$\tilde \tau=\inf\{ t>0, \; T_i(t)>0\; \forall i\in V\}.
$$
From \eqref{density-A} (with $A=0$) we see that, under the law of the non-randomized VRJP, summing on all possible discrete paths,  the distribution of the local time $(T_i(\tau))$ at time $\tau$ has a density on $(0,\iy)^V$, that we denote $g^W_{i_0}((t_i)_{i\in V})$, and that this density is continuous in the initial conductances $W$.
Besides, conditioned on $\fff_\tau^X$, $(X_{t+\tau})_{t\ge 0}$ has the law of the $\star$-VRJP starting from the initial rates $W^{T(\tau)}_{i_0}$.
Hence, conditioned on $H:=\ppp_\sss(T(\tau))$, $\ppp_\aaa(T(\tau))$ is absolutely continuous with respect to 
$\nu_{X_\tau}^{W^H}$, hence $T(t+\tau)-T(\tau)-t/N$ converges a.s. to a point in $\lll_0^{W^{T(\tau)}}$.  This implies, that a.s. $T(t)-t/N$
converges a.s. to a point $U\in \lll_0^W$.
\end{proof}

\end{appendix}

\begin{appendix}
\section*{Results on $M$-matrices}\label{Append_M_matrices}
We will need the following results on $M$-matrices, taken from ~\cite{berman1994nonnegative}, chapter 6. The following definition is equivalent to the more classical definition (1.2) of \cite{berman1994nonnegative}, using
theorem 2.3, property $(G_{20})$ in  \cite{berman1994nonnegative}.
\begin{definition}\label{def-M}
A real $n\times n$ matrix $A$ is called a non-singular M-matrix 
if it has nonpositive off-diagonal coefficients, i.e. 
$$a_{i,j}\le 0, \;\;\; \forall i\ne j,$$
and if the real parts of all of its eigenvalues are positive.

\end{definition}
\begin{proposition}[theorem 2.3, chapter 6, \cite{berman1994nonnegative}]
\label{prop-M}
Assume that $A$ is a real $n\times n$ matrix with nonpositive off-diagonal coefficients. The assertion " $A$ is a non-singular M-matrix " is equivalent to each of the following assertions
\begin{enumerate}
\item[(1)] \label{inv-pos}
(Property $(N_{38})$ in \cite{berman1994nonnegative})
$A$ is invertible and $A^{-1}$ has nonnegative coefficients. If moreover $A$ is irreducible, this implies that $A^{-1}$ has positive coefficients by theorem 2.7 \cite{berman1994nonnegative}.
\item[(2)] (Property $(L_{32})$ in \cite{berman1994nonnegative}) 
\label{cond-M}
There exists a vector $x$ with positive coefficients
such that $y:=A x$ has nonnegative coefficients and such that if $y_{j_0}=0$ for some $j_0$, then there exists a sequence of indices 
$j_1, \ldots j_k$ with $y_{j_k}>0$ such that
$a_{j_l,j_{l+1}}\neq 0$ for all $l=0, \ldots k-1$.
\end{enumerate}
\end{proposition}
\end{appendix}

%
%


\begin{funding}
This work is supported by National Science Foundation of China (NSFC) 
grant No.\ 11771293, by the LABEX MILYON
(ANR-10-LABX-0070) of Universit\'e de Lyon and the  project  ANR LOCAL (ANR-22-CE40-0012-02)  operated by the Agence Nationale de la Recherche (ANR) 
\end{funding}



\bibliographystyle{imsart-number} 
\bibliography{Star-VRJP.bib}       


\end{document}